\newtheorem{theorem}{Theorem}[section]
\newtheorem{lemma}{Lemma}[section]
\newtheorem{remark}{Remark}[section]
\newtheorem{example}{Example}[section]
\numberwithin{equation}{section}
\numberwithin{figure}{section}
\numberwithin{table}{section}
\begin{document}
	
	\baselineskip=2pc
	\vspace*{.30in}
	
	%=============  title  =========================
	\begin{center}
		{\bf Genuinely multidimensional physical-constraints-preserving
			finite volume schemes for the special relativistic hydrodynamics}
	\end{center}
	
	%\vspace{.05in}
	
	\centerline{Dan Ling\footnote{School of Mathematics and Statistics,
			Xi'an Jiaotong University, Xi'an, Shaanxi 710049, China.
			E-mail: danling@xjtu.edu.cn.} and
		Huazhong Tang\footnote{Nanchang Hangkong University, Jiangxi Province, Nanchang 330063, P.R. China;
			Center for Applied Physics and Technology, HEDPS and LMAM,
			School of Mathematical Sciences, Peking University, Beijing 100871, P.R. China.}}
	
	\vspace{.25in}
	
	\centerline{\bf Abstract}
	
	This paper develops the genuinely multidimensional HLL Riemann solver for the two-dimensional special relativistic hydrodynamic
	equations on Cartesian meshes and studies its physical-constraint-preserving (PCP) property.
	Based on the resulting HLL solver, the first- and high-order accurate PCP finite volume
	schemes are proposed. In the high-order scheme,
	the WENO reconstruction, the third-order accurate strong-stability-preserving time discretizations and the PCP flux limiter are used.
	Several numerical results are given to demonstrate the accuracy, performance and resolution of the shock waves etc. as well as
	the genuinely multi-dimensional wave structures of our PCP finite volume schemes.

	\bigskip
	
	\baselineskip=1.4pc

	\vspace{.05in}

	\vfill
	
	\noindent {\bf Keywords: Genuinely multidimensional schemes, HLL, physical-constraint-preserving property, high order accuracy, special relativistic hydrodynamics.}
	
	\newpage
	
	\baselineskip=2pc

\section{Introduction}\label{intro}

The paper is concerned with the physical-constraints-preserving (PCP) genuinely multidimensional finite volume schemes for
the special relativistic hydrodynamics (RHD),
which plays a major role in astrophysics, plasma physics and nuclear
physics etc.,
where the fluid moves at extremely high velocities
near the speed of light so that the relativistic effects become important.
In the (rest) laboratory frame, the two-dimensional (2D) special RHD equations governing an ideal fluid flow can be written in the divergence form
\begin{equation}\label{eq27}
	\frac{\partial\bm{U}}{\partial t}+\sum\limits_{\ell=1}^2\frac{\partial\bm{F}_\ell(\bm{U})}
	{\partial x_\ell}=0,
\end{equation}
where the conservative vector $\bm{U}$ and the flux $\bm{F}_\ell$ are defined respectively by
\begin{equation}\label{eq28}
	\bm{U}=(D, \bm{m}, E)^T, \quad \bm{F}_\ell=\left(Du_\ell,
	\bm{m}u_\ell+p\bm{e}_\ell,(E+p)u_\ell \right)^T,\ \ \ell=1,2,
\end{equation}
here $D=\rho\gamma$, $\bm{m}=Dh\gamma\bm{u}$, $E=Dh\gamma-p$ and $p$ are the mass, momentum and
total energy relative to the laboratory frame and the gas pressure, respectively, $\bm{u}=(u_1,u_2)$ is the fluid velocity vector,
$\bm{e}_\ell$ is the row vector denoting the $\ell$-th row of the unit matrix of size $2$, $\rho$ is the rest-mass density,
$\gamma=1/\sqrt{1-|\bm{u}|^2}$ is the Lorentz factor,
$|\bm{u}|^2=u_1^2+u_2^2$,
$h=1+e+\frac{p}{\rho}$ is the specific enthalpy, and $e$ is the specific internal energy.
%Units are conveniently normalized so that the speed of light is $c=1$.
Note that  natural unit (i.e., the speed of light $c = 1$) has been used.
%The relation between conserved variables $\bm{U}$ and the primitive variables $\bm{V}=(\rho,\bm{u},p)^T$ is
%\begin{equation}\label{eq14}
%D=,~~\bm{m}=Dh\gamma\bm{u},~~E,
%\end{equation}
The system \eqref{eq27} should be closed via the equation of state (EOS), which has a general form of $p=p(\rho,e)$.
For simplicity, this paper considers the EOS for the perfect gas, namely
\begin{equation}\label{eq15}
	p=(\Gamma-1)\rho e,
\end{equation}
with the adiabatic index $\Gamma\in(1, 2]$. Such restriction on $\Gamma$ is reasonable
under the compressibility assumptions, and $\Gamma$ is taken as 5/3 for
the mildly relativistic case and 4/3 for
the ultra-relativistic case. In this case, for $i=1,2$,
the Jacobian matrix $\bm{A}_i(\bm{U})=\partial\bm{F}_i/\partial\bm{U}$ of the system \eqref{eq27} has   $4$
real eigenvalues, which are ordered from the smallest to the biggest as follows
\begin{equation*}%\label{eq18}
	\begin{aligned}
		&\lambda_{i}^{(1)}(\bm{U})=\frac{u_i(1-c_s^2)-c_s\gamma^{-1}\sqrt{1-u_i^2-c_s^2(|\bm{u}|^2-u_i^2)}}
		{1-c_s^2|\bm{u}|^2},\\
		&\lambda_{i}^{(2)}(\bm{U})=\lambda_{i}^{(3)}(\bm{U})=u_i,\\
		&\lambda_{i}^{(4)}(\bm{U})=\frac{u_i(1-c_s^2)+c_s\gamma^{-1}\sqrt{1-u_i^2-c_s^2(|\bm{u}|^2-u_i^2)}}
		{1-c_s^2|\bm{u}|^2},
	\end{aligned}
\end{equation*}
where
$c_s$ is the speed of sound   expressed explicitly by
\begin{equation*}%\label{cs}
	c_s=\sqrt{{\Gamma p}/{(\rho h)}},
\end{equation*}
and satisfies
\begin{equation*}
	c_s^2=\frac{\Gamma p}{\rho h}=\frac{\Gamma p}{\rho+\frac{p}{\Gamma-1}+p}=\frac{(\Gamma-1)\Gamma p}{(\Gamma-1)\rho+\Gamma p}<\Gamma-1\le 1=c.
\end{equation*}

%The system \eqref{eq27} takes into account the relativistic description for the dynamics of the fluid
%(gas) at nearly the speed of light.
Due to the relativistic effect, especially the appearance of the Lorentz factor,
the system \eqref{eq27} becomes more strongly nonlinear than the non-relativistic case,
which leads to that their analytic treatment
is extremely difficult and challenging, except in some special cases, for
instance,  1D Riemann problems or isentropic problems \cite{marti1,pant,lora}.
%A primary and powerful approach to enhance our understanding of the physical
%mechanisms in the RHD equations is through the numerical simulation.
%Compared to the non-relativistic case, there shows much stronger nonlinearity between RHD equations
%\eqref{eq27}, which leads to
Because there are no explicit expressions of the primitive variable vector
$\bm{V}=(\rho,\bm{u},p)^T$ and the flux vectors $\bm{F}_i$ in terms of $\bm{U}$,  their values cannot be explicitly recovered from
$\bm{U}$ and need to solve a nonlinear equation, e.g. the following pressure equation
\begin{equation*}%\label{eq16}
	E+p=D\gamma+\frac{\Gamma}{\Gamma-1}p\gamma^2,
\end{equation*}
with
$\gamma=\big(1-|\bm{m}|^2/(E+p)^2\big)^{-1/2}$.
%Once $p$ is obtained by solving the equation \eqref{eq16}, the rest-mass density $\rho$, the
%specific enthalpy $h$, and the velocity $\bm{u}$ can be orderly calculated as follows
%\begin{equation}\label{eq17}
%\rho=\frac{D}{\gamma},~~h=1+\frac{\Gamma p}{(\Gamma-1)\rho},~~\bm{u}=\frac{\bm{m}}{Dh\gamma}.
%\end{equation}
%
%
Besides those,
there are some physical constraints, such as
$\rho>0, p>0$ and $E\ge D$, as well as that the velocity can not exceed the speed of light,
i.e. $|\bm{u}|<c=1$.
%Stronger nonlinearity between the system \eqref{eq27} make the numerical simulation of the RHD equations
%even more difficult than the non-relativistic case, especially
For the RHD problems with large Lorentz factor or low density or low pressure, or strong discontinuity, it is easy to obtain the negative density or pressure, or the larger velocity than the speed of light in numerical computations,
so that the eigenvalues of the Jacobian matrix or the Lorentz factor may become imaginary, leading directly to the ill-posedness of the discrete problem. Consequently,
there is great necessity and significance to develop robust and accurate
PCP numerical schemes  for \eqref{eq27}, whose  solutions can satisfy the intrinsic physical constraints, or belong to the admissible states set  \cite{wu2015}
%is naturally and intuitively defined by
\begin{equation*}%\label{eq21}
	\mathcal{G}=\left\{\bm{U}=(D,\bm{m},E)^T\big|~\rho>0,~ p>0,~ |\bm{u}|<1\right\},
\end{equation*}
%which is not  practical when the conserved vector $\bm{U}$
%is given. Fortunately, it is
which is equivalent to
\begin{equation*}%\label{eq22}
	\mathcal{G}=\left\{\bm{U}=(D,\bm{m},E)^T\big|~D>0,~E-\sqrt{D^2+|\bm{m}|^2}>0\right\}.
\end{equation*}
Based on that, one can prove some useful properties of $\mathcal{G}$, also see \cite{wu2015}.

\begin{lemma}\label{lem1}
	The admissible state set $\mathcal{G}$ is convex.
\end{lemma}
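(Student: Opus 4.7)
The plan is to use the second, equivalent characterization of $\mathcal{G}$ given in the excerpt, namely $\mathcal{G}=\{\bm{U}:D>0,\ E-\sqrt{D^2+|\bm{m}|^2}>0\}$, and to write $\mathcal{G}$ as the intersection of two manifestly convex sets. The first constraint $D>0$ is linear, so it carves out an open half-space, which is trivially convex. The work therefore reduces to showing that $\mathcal{S}:=\{\bm{U}:E-\sqrt{D^2+|\bm{m}|^2}>0\}$ is convex.

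The key observation I would exploit is that the map $\bm{U}\mapsto \sqrt{D^2+|\bm{m}|^2}=\|(D,m_1,m_2)\|_2$ is just the Euclidean norm of the $\mathbb{R}^3$ vector $(D,\bm{m})$, and is therefore a convex function of $\bm{U}$ by the triangle inequality. Consequently $\bm{U}\mapsto E-\sqrt{D^2+|\bm{m}|^2}$ is concave (linear term minus convex term), and its strict super-level set $\mathcal{S}$ is convex.

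To make this explicit, I would carry out the direct check: take any $\bm{U}^{(1)},\bm{U}^{(2)}\in\mathcal{G}$ and $\theta\in[0,1]$, set $\bm{U}^{(\theta)}:=\theta\bm{U}^{(1)}+(1-\theta)\bm{U}^{(2)}$, and verify $D^{(\theta)}=\theta D^{(1)}+(1-\theta)D^{(2)}>0$. For the second constraint I would invoke Minkowski's inequality in $\mathbb{R}^3$ to obtain
\begin{equation*}
\sqrt{(D^{(\theta)})^2+|\bm{m}^{(\theta)}|^2}
\le \theta\sqrt{(D^{(1)})^2+|\bm{m}^{(1)}|^2}+(1-\theta)\sqrt{(D^{(2)})^2+|\bm{m}^{(2)}|^2},
\end{equation*}
and then bound the right-hand side strictly by $\theta E^{(1)}+(1-\theta)E^{(2)}=E^{(\theta)}$ using $\bm{U}^{(i)}\in\mathcal{G}$.

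There is no real obstacle here; the only subtle point is to notice that $\sqrt{D^2+|\bm{m}|^2}$ is literally a norm of a three-component vector in the variables $(D,m_1,m_2)$, which is why the triangle/Minkowski inequality applies directly without any manipulation of the defining formulas for $D$, $\bm{m}$, and $E$ in terms of $\rho$, $\bm{u}$, $p$. This is what makes the second equivalent characterization of $\mathcal{G}$ so convenient; trying to prove convexity directly from the primitive-variable constraints $\rho>0$, $p>0$, $|\bm{u}|<1$ would be considerably messier.
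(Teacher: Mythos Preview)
Your proof is correct. The paper itself does not supply a proof of this lemma; it simply states the result and refers the reader to \cite{wu2015}. Your argument---using the equivalent characterization $\mathcal{G}=\{D>0,\ E>\sqrt{D^2+|\bm{m}|^2}\}$, recognizing $\sqrt{D^2+|\bm{m}|^2}$ as the Euclidean norm of $(D,m_1,m_2)$, and invoking the triangle (Minkowski) inequality to conclude concavity of $E-\sqrt{D^2+|\bm{m}|^2}$---is the standard one and is essentially what appears in the cited reference.
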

%\noindent
%One can find the complete proof in \cite{wu2015}, so more details will be omitted here.
%Furthermore, one can derive the following properties of $\mathcal{G}$.
\begin{lemma}\label{lem2}
	If assuming $\bm{U},\bm{U}_1,\bm{U}_2\in \mathcal{G}$, then:
	\begin{enumerate}[(\romannumeral1)]
		\item $\kappa\bm{U}\in\mathcal{G}$ for all $\kappa>0$.
		\item $a_1\bm{U}_1+a_2\bm{U}_2\in\mathcal{G}$ for all $a_1,a_2>0$.
		\item $\alpha\bm{U}-\bm{F}_i(\bm{U}),-\beta\bm{U}+\bm{F}_i(\bm{U})\in \mathcal{G}$ for
		 $\beta\le\lambda_i^{(1)}(\bm{U})$, $\lambda_i^{(4)}(\bm{U})\le \alpha$, and $i=1,2$.
	\end{enumerate}
\end{lemma}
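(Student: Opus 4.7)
I will address the three items in order; the first two are essentially immediate from the alternative characterization of $\mathcal{G}$ and from Lemma~\ref{lem1}, while (iii) carries the main content.

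For (i), I would substitute $\kappa\bm{U}=(\kappa D,\kappa\bm{m},\kappa E)^T$ directly into the second description of $\mathcal{G}$. The $D$-component gives $\kappa D>0$, and the energy condition reduces to $\kappa\bigl(E-\sqrt{D^2+|\bm{m}|^2}\bigr)>0$, both of which hold by the hypotheses.

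For (ii), I would factor
\[
a_1\bm{U}_1+a_2\bm{U}_2=(a_1+a_2)\bigl[\theta\bm{U}_1+(1-\theta)\bm{U}_2\bigr],\qquad \theta=\tfrac{a_1}{a_1+a_2}\in(0,1).
\]
The bracket is a convex combination of elements of $\mathcal{G}$ and hence lies in $\mathcal{G}$ by Lemma~\ref{lem1}; applying (i) with $\kappa=a_1+a_2>0$ then completes the argument.

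For (iii), the key observation is the pressure-splitting $\bm{F}_i(\bm{U})=u_i\bm{U}+p\bm{n}_i$ with $\bm{n}_i:=(0,\bm{e}_i,u_i)^T$, which I would verify directly from \eqref{eq28}. Setting $\bm{W}:=\alpha\bm{U}-\bm{F}_i(\bm{U})$ and $s:=\alpha-u_i$, the $D$-component of $\bm{W}$ equals $Ds$; positivity follows because the explicit eigenvalue formula yields the strict bound $\lambda_i^{(4)}>u_i$ whenever $c_s>0$ (equivalently $p>0$), so $\alpha\ge\lambda_i^{(4)}>u_i$. For the energy--momentum condition, a direct expansion combined with the identity $Eu_i-m_i=-pu_i$ (which follows from $E=Dh\gamma-p$ and $m_i=Dh\gamma u_i$) reduces $E_{\bm{W}}^2-D_{\bm{W}}^2-|\bm{m}_{\bm{W}}|^2$ to the quadratic
\[
q(s)=s^2\bigl(E^2-D^2-|\bm{m}|^2\bigr)+2sp^2u_i-p^2(1-u_i^2).
\]
I would then show, using $c_s^2=\Gamma p/(\rho h)$ and the definitions of $D,\bm{m},E$, that $q(s)=(E^2-D^2-|\bm{m}|^2)(\alpha-\lambda_i^{(1)})(\alpha-\lambda_i^{(4)})$, so that the leading coefficient is positive on $\mathcal{G}$ and $\alpha\ge\lambda_i^{(4)}$ forces $q(s)\ge 0$. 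The companion statement for $-\beta\bm{U}+\bm{F}_i(\bm{U})$ is handled symmetrically, with $s$ replaced by $u_i-\beta$: the same computation produces the same quadratic with $\beta$ in place of $\alpha$, and the hypothesis $\beta\le\lambda_i^{(1)}$ now places $\beta$ at or below the smaller root.

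The main obstacle I anticipate is the explicit factorization of $q(s)$. Identifying its two roots with $\lambda_i^{(1)}$ and $\lambda_i^{(4)}$ requires rewriting $E^2-D^2-|\bm{m}|^2$ in terms of $\rho$, $h$, $\gamma$, and $c_s$, and then matching the discriminant of $q$ with the radical appearing in the eigenvalue formulas. This is purely algebraic bookkeeping, but it is the one step where the EOS \eqref{eq15} and the detailed spectral structure of the Jacobian enter genuinely.
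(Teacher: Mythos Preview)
Your arguments for (i) and (ii) are fine and coincide with the paper's.

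For (iii), the decomposition $\bm{F}_i(\bm{U})=u_i\bm{U}+p\bm{n}_i$ and the resulting quadratic
\[
q(\alpha)=A(\alpha-u_i)^2+2p^2u_i(\alpha-u_i)-p^2(1-u_i^2),\qquad A:=E^2-D^2-|\bm{m}|^2,
\]
are correct (and agree with the paper's expression $(A-p^2)(\alpha-u_i)^2+p^2(\alpha^2-1)$). However, the claimed factorization $q(\alpha)=A(\alpha-\lambda_i^{(1)})(\alpha-\lambda_i^{(4)})$ is \emph{false}: the roots of $q$ are in general strictly inside the interval $[\lambda_i^{(1)},\lambda_i^{(4)}]$, not at its endpoints. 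A quick check with $\Gamma=2$, $\rho=p=1$, $\bm{u}=0$ gives $A=3$, $q(\alpha)=3\alpha^2-1$ with roots $\pm 1/\sqrt{3}$, whereas $\lambda_i^{(1,4)}=\mp c_s=\mp\sqrt{2/3}$; indeed $q(\lambda_i^{(4)})=1>0$. So the ``purely algebraic bookkeeping'' you anticipate cannot succeed as stated, and the argument as written has a genuine gap.

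What actually needs to be shown is the inequality $q(\lambda_i^{(4)})\ge 0$ (and then monotonicity of $q$ on $[\lambda_i^{(4)},1)$ gives $q(\alpha)\ge 0$). This is exactly the route the paper takes: it rewrites the quadratic as $p^2 f(\alpha)$ with $f(s)=\frac{\Gamma\gamma^{2}}{c_s^2}\big(\frac{2}{\Gamma-1}-\frac{\Gamma c_s^2}{(\Gamma-1)^2}\big)(s-u_i)^2+s^2-1$, checks that $f$ is increasing on $[\lambda_i^{(4)},1)$, and then evaluates $f(\lambda_i^{(4)})$ explicitly to obtain a nonnegative perfect square. The same computation with $\beta$ handles the other case. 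You should also note that, to land in $\mathcal{G}$, one must verify $E^\alpha>0$ (not merely $(E^\alpha)^2>(D^\alpha)^2+|\bm{m}^\alpha|^2$); the paper does this by a separate chain of inequalities, while in your framework a continuity argument from $\alpha\to 1^-$ (or $\alpha\to\infty$) together with $E^\alpha\neq 0$ would suffice.
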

The second and third properties in Lemma \ref{lem2} are  formally different from those in Lemma 2.3 of \cite{wu2015}. %, its proof can be completed by using the latter.
Their  proof  slightly different from that of the latter  can be found in \ref{appendix}.
%The dynamics of the relativistic system requires solving highly nonlinear
%governing equations, rendering the analytic treatment of practical problems extremely difficult. The numerical simulation is the primary and powerful way to study and understand
%the RHDs.

The study of numerical methods for the RHDs may date back to
the finite difference code via artificial viscosity  for the spherically symmetric general RHD equations in the Lagrangian coordinate \cite{may1,may2} and  for multi-dimensional RHD equations in the Eulerian coordinate \cite{wilson}.
Since 1990s, the numerical study of the RHD began to attract considerable attention, see some early review articles \cite{marti2,marti2015,font},
and
various modern shock-capturing methods with an exact or approximate Riemann solver have been developed for the RHD equations.
%For the hyperbolic conservation laws, containing \eqref{eq27},
%the exact or approximate Riemann solver is an important building block
%for the design of robust and accurate numerical schemes.
%As a result, a large number of numerical methods have been developed with the use of
%Riemann solvers,
Some examples are the two-shock Riemann solver
\cite{colella}, the Roe Riemann solver \cite{roe}, the HLL Riemann solver \cite{harten}
and the HLLC Riemann solver \cite{toro} and so on.
Some other higher-order accurate methods have also been well studied in the literature, e.g. the ENO (essentially non-oscillatory) and weighted ENO   methods \cite{dolezal,zanna,tchekhovskoy},
the discontinuous Galerkin  methods \cite{RezzollaDG2011,zhao,ZhaoTang-CiCP2017,ZhaoTang-JCP2017}, the adaptive moving mesh methods \cite{he1,he2,Duan-Tang2020RHD2},
and the direct Eulerian GRP schemes \cite{yang2011direct,yang2012direct,wu2014,wu2016,Yuan-Tang2020}.
Recently, based on the properties of $\mathcal{G}$, some PCP schemes were well developed for the special RHDs. They are the high-order accurate PCP finite difference WENO schemes,  discontinuous Galerkin (DG) methods and Lagrangian  finite volume schemes proposed
in \cite{wu2015,wu2017,qin,wu2017a,ling}.
Such works were successfully extended to the special relativistic magnetohydrodynamics (RMHD) in \cite{wu2017m3as,wu2018zamp},
where the importance of divergence-free fields in achieving PCP methods is shown.
Recently, the entropy-stable schemes were also developed for the special RHD or RMHD
equations \cite{dhoriya,Duan-Tang2020RHD,Duan-Tang2020RMHD,Duan-Tang2020RHD2,biswasa,duan2022}.
Most of the above mentioned methods are built on
the 1D Riemann solver, which is used to solve the local 1D Riemann problem
at the cell interface  by picking up flow variations that are orthogonal
to the cell interface and then give the exact or approximate Riemann solution. %When they are applied to multi-dimensional problems,
%the direction splitting approach is used.
For multi-dimensional problems,
%Even though the splitting approach is utilized widely and
%has shown its good behavior for many multidimensional flows,
there are still confronted with
enormous risks that the 1D Riemann solvers may lose their computational
efficacy and efficiency to some content, because %the direction split approach leads to that
some flow features propagating transverse to the mesh boundary
might be discarded, %The deficiency of this technique clearly shows up
%especially when shocks or shear waves are contained in the numerical solution,
see \cite{vanLeer1993} for more details.
Therefore, it is necessary to capture much more flow features and then incorporate genuinely
multidimensional (physical) information into numerical methods.

In the early 1990s, owing to a shift from the finite-volume approach to the flctuation approach, the state of the art in genuinely multi-dimensional upwind differencing has made dramatic advances.
A early review of multidimensional upwinding may be found in \cite{vanLeer1993}.
For the linearized Euler equations, a genuinely multidimensional
first-order finite volume scheme
was constructed in \cite{abgrall} by computing the exact solution of the Riemann problem
for a linear hyperbolic equation obtained by  linearizing the Euler equation.
%After that, the study of multidimensional schemes for computational fluid dynamics
%has attracted more and more researchers.
Up to now, there have been some further developments on
multidimensional Riemann solvers and corresponding numerical schemes, including
the multidimensional HLL schemes for solving the Euler equations on unstructured triangular meshes
\cite{capdeville1,capdeville2},
the genuinely multidimensional HLL-type scheme
with convective pressure flux split Riemann solver \cite{mandal}, the multidimensional HLLE schemes
for gas dynamics \cite{wendroff,barsara}, the multidimensional MuSCL solver for magnetohydrodynamics \cite{barsara2015},
the multidimensional HLLC schemes \cite{barsara2,barsara3} for hydrodynamics
and magnetohydrodynamics, the well-balanced two-dimensional HLL scheme for shallow water equations \cite{schneider2021},
and the genuinely two-dimensional
scheme for compressible flows in curvilinear coordinates \cite{qu} etc.
%It is worth mentioning that all the aforementioned multidimensional schemes
%are only for the non-relativistic fluid flows.
For the 2D special RHDs, the existing genuinely multidimensional scheme is the finite volume local evolution Galerkin method, developed in \cite{wu2014b}.
%
%When the velocity of the fluid becomes much lager and close to the speed of light,
%the relativistic effect
%cannot be ignored any more and must be taken into account.
%Relativistic hydrodynamics (RHD) has played an important role in many fields of modern physics,
%such as astrophysics, high-energy and nuclear physics, etc.
%Some examples are the local characteristic approach \cite{Mart1991Numerical}, the two-shock approximation solvers \cite{balsara1994riemann,Dai1997}, the Roe solver \cite{F.Eulderink1995}, the flux corrected transport method \cite{duncan1994}, the flux-splitting method based on the spectral decomposition \cite{Donat1998}, the piecewise parabolic method \cite{Marti1996,Mignone2005}, the HLL (Harten-Lax-van Leer) method \cite{schneider1993new}, the HLLC (HLL-Contact) method \cite{mignone2005hllc} and the Steger-Warming flux vector splitting method \cite{Zhao2014Steger}.
%The analytical solution of the  Riemann problem in relativistic hydrodynamics was studied in \cite{mart1994}.

This paper  will develop the
genuinely multidimensional PCP finite volume schemes for the RHD equations \eqref{eq27}.
%which means that the multidimensional scheme aims at preserving the positivity of the rest-mass
%density and pressure, and the bound of the fluid velocity.
%A 2D
%HLL Riemann solver and its  PCP property are  first given   %(that is, $\rho>0, p>0, |\bm{u}|<c=1$)
%when the HLL wave speeds are estimated suitably, and then
%the genuinely multidimensional PCP HLL finite volume scheme is presented and demonstrated by numerical experiments.
It is organized as follows. Section \ref{multi-hlle} derives
the 2D HLL Riemann solver for \eqref{eq27} and
studies the PCP property of its intermediate state.
Section \ref{scheme} presents the first-order PCP genuinely multidimensional HLL scheme and then extends it to the high-order PCP scheme by using WENO reconstruction and the third-order accurate SSP
time discretizations as well as the   PCP flux limiter.
Section \ref{num} conducts several numerical experiments to demonstrate the accuracy and  performance
of the present schemes. Section \ref{con} concludes the paper with some remarks.

\section{2D HLL Riemann solver}\label{multi-hlle}

This section introduces the genuinely multidimensional HLL Riemann solver \cite{barsara} to the 2D special RHD equations \eqref{eq27} with the EOS \eqref{eq15} on Cartesian meshes and studies its PCP property.
For convenience, here and hereafter, the symbols  $(\bm{F}_1, \bm{F}_2)$, $(u_1,u_2)$, and $(x_1,x_2)$  will be replaced with $(\bm{F}, \bm{G})$, $(u,v)$, and $(x,y)$, respectively,

Consider the 2D Riemann problem of \eqref{eq27}
with the initial data as displayed in Figure \ref{fig1},
denoted by RP$(\bm{U}_{RU},\bm{U}_{LU},\bm{U}_{LD},\bm{U}_{RD})$,
where %the four zones that
%come together at the origin O and
four constant states, $\bm{U}_{RU}$ (right-up), $\bm{U}_{LU}$ (left-up),
$\bm{U}_{LD}$ (left-down) and $\bm{U}_{RD}$ (right-down) are specified
in the first, second, third and fourth quadrants,  respectively, and $O$ is the coordinate origin.
Denote the largest left-, right-, up- and down-moving  speeds of
the elementary waves emerging from the initial discontinuities by $S_L$, $S_R$, $S_U$, $S_D$, respectively.
Specially, $S_L$ and $S_R$ are obtained as
the largest left- and right-moving wave speeds in the 1D HLL solvers \cite{toro} for the two  1D
Riemann problems in $x$-direction, denoted respectively by RP${\{\bm{U}_{LU},\bm{U}_{RU}\}}$
and RP${\{\bm{U}_{LD},\bm{U}_{RD}\}}$,
and $S_D$ and $S_U$ are obtained by considering
two  1D Riemann problems in $y$-direction, denoted respectively by RP${\{\bm{U}_{LU},\bm{U}_{LD}\}}$
and RP${\{\bm{U}_{RU},\bm{U}_{RD}\}}$.
\begin{figure}[H]
	\centering
	\vspace{-2ex}
	\includegraphics[width=3.6in]{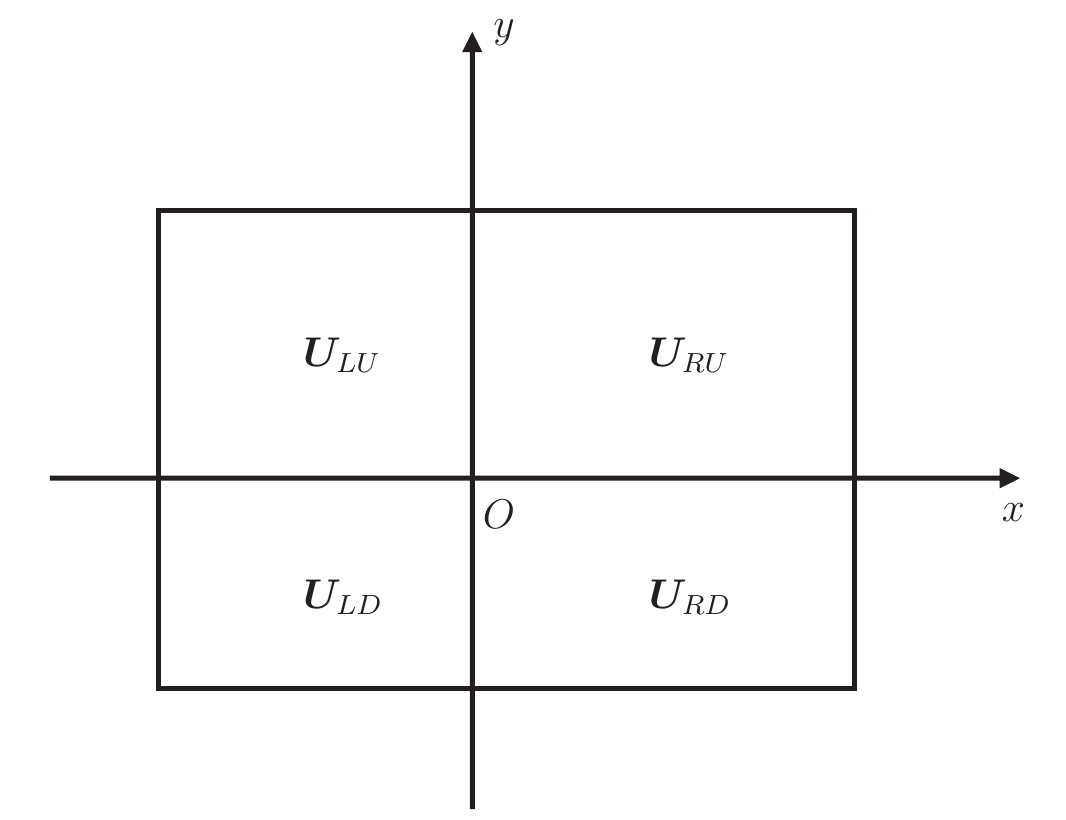}
	\caption{The %schematic diagram of the
		initial data of
		a 2D Riemann problem at the point $O$.}
	\label{fig1}
\end{figure}

Let us  estimate
the wave speeds $S_L,S_R,S_D,S_U$ used in the 2D HLL solver.
If denoting $\lambda_A^{(1)}(\bm{U}_{LD})$ and
$\lambda_A^{(4)}(\bm{U}_{LD})$ as the smallest and largest eigenvalues of Jacobian matrix
$\partial\bm{F}/\partial\bm{U}(\bm{U}_{LD})$ respectively,
 $\lambda_B^{(1)}(\bm{U}_{LD})$ and
$\lambda_B^{(4)}(\bm{U}_{LD})$ as the smallest and largest eigenvalues of Jacobian matrix
$\partial\bm{G}/\partial\bm{U}(\bm{U}_{LD})$ respectively,
 and making similar definitions
at the states $\bm{U}_{RU}$, $\bm{U}_{LU}$, and $\bm{U}_{RD}$
in the $x$- and  $y$-directions,
then the wave speeds $S_L, S_R, S_D$ and $S_U$ are respectively given by
\begin{equation}\label{eq2}
	\begin{aligned}
		&S_L=\alpha\min\big(\lambda_A^{(1)}(\bm{U}_{LD}), \lambda_A^{(1)}(\bm{U}_{RD}), \lambda_A^{(1)}(\bm{U}_{LU}),
		\lambda_A^{(1)}(\bm{U}_{RU})\big),\\
		&S_R=\alpha\max\big(\lambda_A^{(4)}(\bm{U}_{LD}), \lambda_A^{(4)}(\bm{U}_{RD}), \lambda_A^{(4)}(\bm{U}_{LU}),
		\lambda_A^{(4)}(\bm{U}_{RU})\big),\\
		&S_D=\alpha\min\big(\lambda_{B}^{(1)}(\bm{U}_{LD}), \lambda_{B}^{(1)}(\bm{U}_{RD}), \lambda_{B}^{(1)}(\bm{U}_{LU}),
		\lambda_{B}^{(1)}(\bm{U}_{RU})\big),\\
		&S_U=\alpha\max\big(\lambda_{B}^{(4)}(\bm{U}_{LD}), \lambda_{B}^{(4)}(\bm{U}_{RD}), \lambda_{B}^{(4)}(\bm{U}_{LU}),
		\lambda_{B}^{(4)}(\bm{U}_{RU})\big),
	\end{aligned}
\end{equation}
where $\alpha\ge1$ will be determined later.
Noting that the condition $\alpha>1$ is  used to preserve
the PCP property in Theorem \ref{thm1}. In practice,
$\alpha$ is sufficiently taken as one if the PCP property is not
necessary.
There exist several different ways to define the wave speeds in the 1D HLL-type Riemann solvers, see e.g. \cite{batten,einfeldt,davis}.
In the following,   we only discuss the case with $S_L<0<S_R$ and $S_D<0<S_U$, because in the case of that $S_L$ and $S_R$ (or $S_D$ and $S_U$) have the same sign, our genuinely multidimensional Riemann solver will degenerate to the 1D Riemann solver.

Similar to the 1D HLL Riemann solvers,
one has to derive the intermediate state
$\bm{U}^{\ast}$ in the approximate solution of
the above 2D Riemann problem and corresponding fluxes $\bm{F}^\ast$ and $\bm{G}^\ast$.
For any given time $T>0$, choose
a three-dimensional cuboid $\mathbb V_{LRDU}(0,T)$ in the $(x,y,t)$ space as follows:
its top and bottom   are rectangles with
four vertices
$$(TS_L,TS_D,0),~(TS_R,TS_D,0),~(TS_L,TS_U,0),~(TS_R,TS_U,0),$$
and
$$(TS_L,TS_D,T),~(TS_R,TS_D,T),~(TS_L,TS_U,T),~(TS_R,TS_U,T),$$
respectively. Integrating \eqref{eq27} over $\mathbb V_{LRDU}(0,T)$ gives
\begin{align}\nonumber
	\bm{U}^{\ast}\mathscr{A}&-\int_{TS_L}^{TS_R}\int_{TS_D}^{TS_U}
	\bm{U}(x,y,0)dydx
	\\ \nonumber
	&+\int_{0}^{T}\int_{TS_D}^{TS_U}\bm{F}(\bm{U}(TS_R,y,t))dydt
	-\int_{0}^{T}\int_{TS_D}^{TS_U}\bm{F}(\bm{U}(TS_L,y,t))dydt
	\\
	&+\int_{0}^{T}\int_{TS_L}^{TS_R}\bm{G}(\bm{U}(x,TS_U,t))dxdt
	-\int_{0}^{T}\int_{TS_L}^{TS_R}\bm{G}(\bm{U}(x,TS_D,t))dxdt=0,
	\label{eq3}
\end{align}
where
$$\bm{U}^{\ast}=\frac{1}{\mathscr{A}}\int_{TS_D}^{TS_U}\int_{TS_L}^{TS_R}
\bm{U}(x,y,T)dxdy,\ \
\mathscr{A}=T^2(S_R-S_L)(S_U-S_D).$$
From \eqref{eq3}, one has
\begin{align}\nonumber
	\bm{U}^{\ast}&=\frac{1}{\mathscr{A}}\int_{TS_D}^{TS_U}\int_{TS_L}^{TS_R}
	\bm{U}(x,y,T)dxdy
	%\\ \nonumber
	%&
	=\frac{S_RS_U\bm{U}_{RU}+S_LS_D\bm{U}_{LD}-S_RS_D\bm{U}_{RD}
		-S_LS_U\bm{U}_{LU}}{(S_R-S_L)(S_U-S_D)}
	\\ %\nonumber
	& -\frac{S_U(\bm{F}_{RU}-\bm{F}_{LU})-S_D(\bm{F}_{RD}-\bm{F}_{LD})}{(S_R-S_L)(S_U-S_D)}
	%\\ &
	-\frac{S_R(\bm{G}_{RU}-\bm{G}_{RD})-S_L(\bm{G}_{LU}-\bm{G}_{LD})}{(S_R-S_L)(S_U-S_D)},
	\label{eq43}
\end{align}
where $\bm{F}_{LU}=\bm{F}(\bm{U}_{LU})$ and $\bm{G}_{LU}=\bm{G}(\bm{U}_{LU})$, and
$\bm{F}_{RU},\bm{F}_{LD},\bm{F}_{RD},\bm{G}_{RU},\bm{G}_{LD},\bm{G}_{RD}$
are similarly defined.
It is clear that the calculation of the intermediate  state $\bm{U}^{\ast}$ depends on four states
$\bm{U}_{LD}$, $\bm{U}_{LU}$, $\bm{U}_{RD}$ and $\bm{U}_{RU}$,
in other words, $\bm{U}^{\ast}$ contains genuinely multidimensional information.
%Moreover, we can also observe several interesting aspects of $\bm{U}^\ast$.
%For example, when we confine the variation to be in the $x$-direction we have
In particular, if
\begin{equation*}%\label{eq36}
	\bm{U}_{LD}=\bm{U}_{LU},~~~\bm{U}_{RD}=\bm{U}_{RU},
\end{equation*}
then
\begin{equation*}%\label{eq37}
	\bm{F}_{LD}=\bm{F}_{LU},~~~\bm{F}_{RD}=\bm{F}_{RU},~~~\bm{G}_{LD}=\bm{G}_{LU},
	~~~\bm{G}_{RD}=\bm{G}_{RU},
\end{equation*}
and  %\eqref{eq36} and \eqref{eq37} into \eqref{eq43} yields
\begin{equation}\label{eq38}
	\bm{U}^{\ast}=\frac{S_R\bm{U}_{RD}-S_L\bm{U}_{LU}+\bm{F}_{LD}-\bm{F}_{RD}}{S_R-S_L},
\end{equation}
which is indeed the intermediate state in the 1D HLL Riemann solver in \cite{toro}.

Let us turn to obtain the interface fluxes $\bm{F}^{\ast}$ and $\bm{G}^{\ast}$ for the multidimensional Riemann solver (in the case of $S_L<0<S_R$ and $S_D<0<S_U$).
Integrating respectively the system
\eqref{eq27} over the left portion and the top portion
(or the right and bottom portions) of the control
volume $\mathbb V_{LRDU}(0,T)$ yields
\begin{align} \nonumber
	\int_{TS_L}^{0}\int_{TS_D}^{TS_U}\bm{U}(x,y,T)dydx&=\int_{TS_L}^{0}
	\int_{TS_D}^{TS_U}\bm{U}(x,y,0)dydx
	\\  \nonumber
	~~~~~-\int_{0}^{T}&\int_{TS_D}^{TS_U}\big(\bm{F}(\bm{U}(0,y,t))-\bm{F}(\bm{U}(TS_L,y,t))\big)dydt\\
	-\int_{0}^{T}&\int_{TS_L}^{0}\big(\bm{G}(\bm{U}(x,TS_U,t))-\bm{G}(\bm{U}(x,TS_D,t))\big)dxdt,
	\label{eq11}
	\\ \nonumber
	\int_{TS_L}^{TS_R}\int_{0}^{TS_U}\bm{U}(x,y,T)dydx&=
	\int_{TS_L}^{TS_R}\int_{0}^{TS_U}\bm{U}(x,y,0)dydx
	\\ \nonumber
	-\int_{0}^{T}\int_{0}^{TS_U}&\big(\bm{F}(\bm{U}(TS_R,y,t))
	-\bm{F}(\bm{U}(TS_L,y,t))\big)dydt\\
	-\int_{0}^{T}\int_{TS_L}^{TS_R}&\big(\bm{G}(\bm{U}(x,TS_U,t))-\bm{G}(\bm{U}(x,0,t))\big)dxdt.
	\label{eq12}
\end{align}
\begin{figure}[H]
	\centering
	\includegraphics[width=5.6in,height=2.0in]{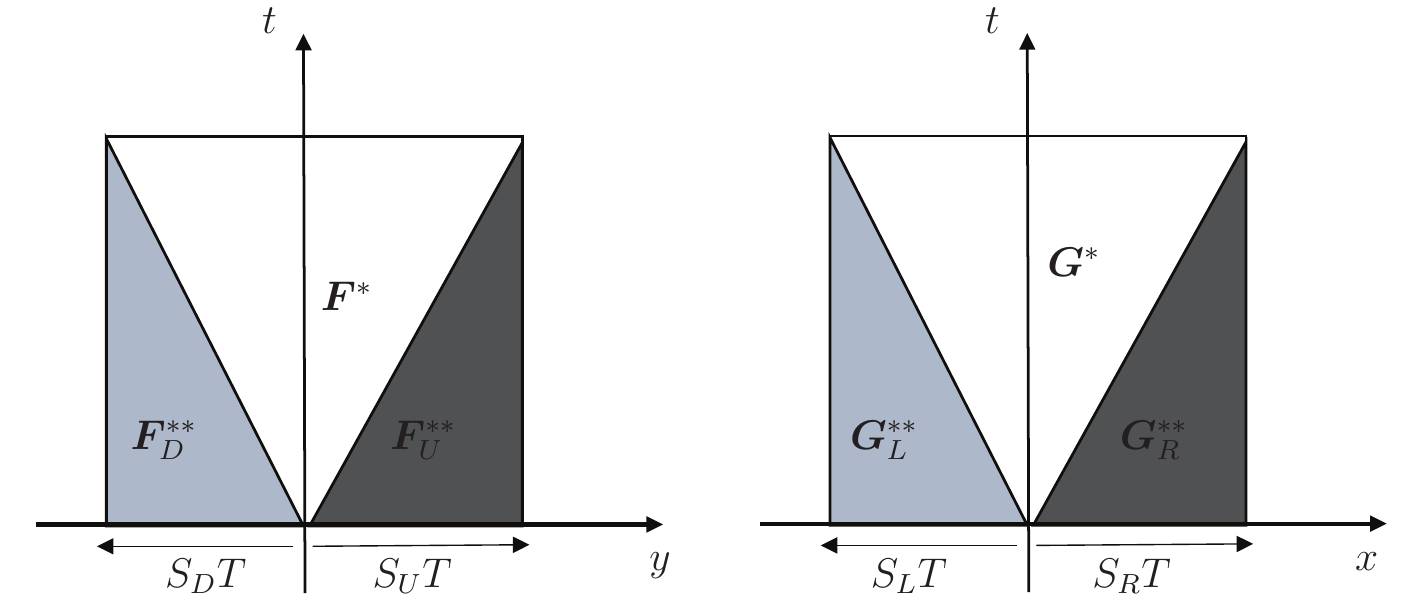}
	\caption{Fluxes along the faces $x=0$ (left) and $y=0$ (right) respectively
		consisting of several different portions.}
	\label{facefluxes}
\end{figure}
As shown in the Figure \ref{facefluxes},
the fluxes along the faces $x=0$ (left) and $y=0$ (right) respectively
are consisting of several different portions,
so that the integrals in \eqref{eq11} and \eqref{eq12}
on the $x=0$ and $y=0$ faces respectively read as
\begin{align}
	\int_{0}^{T}\int_{TS_D}^{TS_U}\bm{F}(\bm{U}(0,y,t))dydt&=\frac{T^2}{2}\bigg(S_U\bm{F}_U^{\ast\ast}
	-S_D\bm{F}_D^{\ast\ast}+(S_U-S_D)\bm{F}^\ast\bigg),\label{add14}\\
	\int_{0}^{T}\int_{TS_L}^{TS_R}\bm{G}(\bm{U}(x,0,t))dxdt&=\frac{T^2}{2}\bigg(S_R\bm{G}_R^{\ast\ast}
	-S_L\bm{G}_L^{\ast\ast}+(S_R-S_L)\bm{G}^\ast\bigg),\label{add15}
\end{align}
where $\bm{F}_U^{\ast\ast},\bm{F}_D^{\ast\ast},\bm{G}_R^{\ast\ast}$,
and $\bm{G}_L^{\ast\ast}$ are corresponding 1D HLL fluxes in the
1D HLL Riemann solver and have the specific forms of
\vspace{-1ex}\begin{align}
\widehat{\bm{F}}^{\text{\tiny 1d-HLL}}(\bm{U}_{LU},\bm{U}_{RU})
:=	\bm{F}_U^{\ast\ast}&=\frac{1}{S_R-S_L}\bigg(S_R\bm{F}_{LU}-S_L\bm{F}_{RU}
	+S_LS_R(\bm{U}_{RU}-\bm{U}_{LU})\bigg),\label{add16}\\
\widehat{\bm{F}}^{\text{\tiny 1d-HLL}}(\bm{U}_{LD},\bm{U}_{RD})
:=	\bm{F}_D^{\ast\ast}&=\frac{1}{S_R-S_L}\bigg(S_R\bm{F}_{LD}-S_L\bm{F}_{RD}
	+S_LS_R(\bm{U}_{RD}-\bm{U}_{LD})\bigg),\label{add17}
\\
\widehat{\bm{G}}^{\text{\tiny 1d-HLL}}(\bm{U}_{RD},\bm{U}_{RU})
:=
\bm{G}_R^{\ast\ast}&=\frac{1}{S_U-S_D}\bigg(S_U\bm{G}_{RD}-S_D\bm{G}_{RU}
	+S_DS_U(\bm{U}_{RU}-\bm{U}_{RD})\bigg),\label{add18}\\
\widehat{\bm{G}}^{\text{\tiny 1d-HLL}}(\bm{U}_{LD},\bm{U}_{LU})
:= \bm{G}_L^{\ast\ast}&=\frac{1}{S_U-S_D}\bigg(S_U\bm{G}_{LD}-S_D\bm{G}_{LU}
	+S_DS_U(\bm{U}_{LU}-\bm{U}_{LD})\bigg).\label{add19}
\end{align}
Combining $\eqref{eq43}$ with the relations
in \eqref{eq11}-\eqref{add19} yields
\vspace{-1ex}
\begin{align}\label{eq34}
	\bm{F}^\ast&=\frac{1}{S_U-S_D}\bigg(S_U\bm{F}_U^{\ast\ast}-S_D\bm{F}_D^{\ast\ast}-
	\frac{2S_LS_R}{S_R-S_L}(\bm{G}_{RU}-\bm{G}_{RD}-\bm{G}_{LU}+\bm{G}_{LD})\bigg),
	\\
	\label{eq35}
\bm{G}^\ast&=\frac{1}{S_R-S_L}\bigg(S_R\bm{G}_R^{\ast\ast}-S_L\bm{G}_L^{\ast\ast}-
	\frac{2S_DS_U}{S_U-S_D}(\bm{F}_{RU}-\bm{F}_{RD}-\bm{F}_{LU}
+\bm{F}_{LD})\bigg),
\end{align}
which are the 2D HLL fluxes in the case of $S_L<0<S_R$ and $S_D<0<S_U$.
For all other cases of $S_L,S_R,S_D,S_U$ with certain signs (either positive or negative), %different from the case of $S_L<0<S_R$ and $S_D<0<S_U$,
we can still similarly evaluate the above integrals on the $x=0$ and $y=0$ faces, and  get the fluxes $\bm{F}^\ast$ and $\bm{G}^\ast$ in the multidimensional Riemann solver
by \eqref{eq11} and \eqref{eq12}.
Hence %the multidimensional fluxes can be generalized for all situations
if setting \cite{toro}
\begin{equation}\label{eq29}
	S_L^{-}=\min(S_L,0),~~S_R^{+}=\max(S_R,0),~~S_D^{-}=\min(S_D,0),~~S_U^{+}=\max(S_U,0),
\end{equation}
%If denoting the 2D HLL fluxes as $\bm{F}^{\ast}$
%and $\bm{G}^{\ast}$ in $x$- and $y$-directions respectively,
then one gets the 2D HLL fluxes $\bm{F}^\ast$ and $\bm{G}^\ast$ in the multidimensional Riemann solver for all situations as follows
\begin{align}\label{eq41}
&\begin{aligned}
		\widehat{\bm{F}}^{\text{\tiny 2d-HLL}}&(\bm{U}_{LD},\bm{U}_{LU},\bm{U}_{RD},\bm{U}_{RU}):=\bm{F}^{\ast}
\\		&=\frac{1}{S_U^{+}-S_D^{-}}\bigg(S_U^{+}\bm{F}_U^{\ast\ast}-S_D^{-}\bm{F}_D^{\ast\ast}-
		\frac{2S_L^{-}S_R^{+}}{S_R^{+}-S_L^{-}}(\bm{G}_{RU}-\bm{G}_{RD}-\bm{G}_{LU}+\bm{G}_{LD})\bigg),
	\end{aligned}
	\\
	\label{eq42}
&\begin{aligned}
		\widehat{\bm{G}}^{\text{\tiny 2d-HLL}}&(\bm{U}_{LD},\bm{U}_{LU},\bm{U}_{RD},\bm{U}_{RU}):=\bm{G}^{\ast}
\\ &=\frac{1}{S_R^{+}-S_L^{-}}\bigg(S_R^{+}\bm{G}_R^{\ast\ast}-S_L^{-}\bm{G}_L^{\ast\ast}-
		\frac{2S_D^{-}S_U^{+}}{S_U^{+}-S_D^{-}}(\bm{F}_{RU}-\bm{F}_{RD}-\bm{F}_{LU}+\bm{F}_{LD})\bigg).
	\end{aligned}
\end{align}

Next, let us study the PCP property of the multidimensional
HLL Riemann solver, which means that the intermediate state $\bm{U}^{\ast}$
in the multidimensional Riemann solver is admissible.
Only the case  of $S_L<0<S_R,S_D<0<S_U$ needs to be discussed here,
since all other situations (except for the non-trivial case of $S_L<0<S_R,S_D<0<S_U$)
produce the 1D intermediate state which can be easily proved to be PCP according
to \cite{ling}.
The PCP property of the multidimensional HLL Riemann solver with \eqref{eq2} can be obtained as follows.
\begin{theorem}\label{thm1}
If
$\bm{U}_{RU},\bm{U}_{LU},\bm{U}_{LD},\bm{U}_{RD}\in \mathcal G$,
and the wave speeds $S_L,S_R,S_D,S_U$ are taken as \eqref{eq2} with $\alpha=2$,
	then the intermediate state $\bm{U}^{\ast}$ in \eqref{eq43} obtained for the multidimensional
	HLL Riemann solver is admissible, i.e.
	\begin{equation*}%\label{eq5}
		D^{\ast}>0,~~E^{\ast}>0,~~(E^{\ast})^2-(D^{\ast})^2-|\bm{m}^{\ast}|^2>0.
	\end{equation*}
\end{theorem}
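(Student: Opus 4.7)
The plan is to clear the positive denominator $(S_R-S_L)(S_U-S_D)$ from \eqref{eq43} and write the resulting vector as a positive linear combination of building blocks that are admissible by Lemma \ref{lem2}(iii). Under the standing assumption $S_L<0<S_R$, $S_D<0<S_U$, the choice $\alpha=2$ in \eqref{eq2} gives $S_R/2\ge\lambda_A^{(4)}(\bm U)$, $S_L/2\le\lambda_A^{(1)}(\bm U)$, $S_U/2\ge\lambda_B^{(4)}(\bm U)$ and $S_D/2\le\lambda_B^{(1)}(\bm U)$ for each of the four corner states. Applying Lemma \ref{lem2}(iii) with these bounds and then rescaling by $2$ via Lemma \ref{lem2}(i) shows that for every $\bm U\in\{\bm U_{RU},\bm U_{LU},\bm U_{RD},\bm U_{LD}\}$ the eight vectors
\[ S_R\bm U - 2\bm F(\bm U),\ \ -S_L\bm U + 2\bm F(\bm U),\ \ S_U\bm U - 2\bm G(\bm U),\ \ -S_D\bm U + 2\bm G(\bm U) \]
all lie in $\mathcal G$. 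The factor $2$ multiplying the flux is exactly the slack that $\alpha=2$ buys.

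Next I would regroup the right-hand side of \eqref{eq43} corner-by-corner rather than direction-by-direction and verify the algebraic identity
\begin{align*}
(S_R-S_L)(S_U-S_D)\,\bm U^\ast
&= \tfrac{S_U}{2}\bigl(S_R\bm U_{RU}-2\bm F_{RU}\bigr) + \tfrac{S_R}{2}\bigl(S_U\bm U_{RU}-2\bm G_{RU}\bigr) \\
&\quad + \tfrac{S_U}{2}\bigl(-S_L\bm U_{LU}+2\bm F_{LU}\bigr) + \tfrac{-S_L}{2}\bigl(S_U\bm U_{LU}-2\bm G_{LU}\bigr) \\
&\quad + \tfrac{-S_D}{2}\bigl(S_R\bm U_{RD}-2\bm F_{RD}\bigr) + \tfrac{S_R}{2}\bigl(-S_D\bm U_{RD}+2\bm G_{RD}\bigr) \\
&\quad + \tfrac{-S_D}{2}\bigl(-S_L\bm U_{LD}+2\bm F_{LD}\bigr) + \tfrac{-S_L}{2}\bigl(-S_D\bm U_{LD}+2\bm G_{LD}\bigr).
\end{align*}
The one algebraic observation that makes this work is that each mixed product $S_RS_U$, $-S_LS_U$, $-S_RS_D$, $S_LS_D$ in the numerator of \eqref{eq43} splits cleanly as a sum of two equal halves, one of which is absorbed into the $\bm F$-block and the other into the $\bm G$-block of the same corner; once this is noted, expanding the right-hand side above and matching coefficients of every $\bm U$, $\bm F$ and $\bm G$ against \eqref{eq43} is a routine verification (the $\bm F$- and $\bm G$-difference structure reconstitutes itself automatically).

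Finally, since all eight weights $\tfrac{S_U}{2}$, $\tfrac{S_R}{2}$, $\tfrac{-S_L}{2}$, $\tfrac{-S_D}{2}$ are strictly positive in the case under consideration, the sum on the right-hand side lies in $\mathcal G$ by the convex-cone property Lemma \ref{lem2}(ii); dividing by the positive factor $(S_R-S_L)(S_U-S_D)$ via Lemma \ref{lem2}(i) produces $\bm U^\ast\in\mathcal G$, which is precisely the three required inequalities (with $E^\ast>0$ automatic from $D^\ast>0$ together with $(E^\ast)^2>(D^\ast)^2+|\bm m^\ast|^2$). The genuinely delicate step is the discovery of this corner-wise regrouping: \eqref{eq43} associates each flux difference with a single coordinate direction, so there is no a priori reason the contributions should decouple by corner, and the decomposition in fact breaks down if $\alpha=1$, since then $(S_R/2)\bm U_{RU}-\bm F_{RU}$ is no longer guaranteed to be admissible and the two halves of each corner's contribution can no longer be independently certified through Lemma \ref{lem2}(iii).
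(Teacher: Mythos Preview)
Your proof is correct and follows essentially the same approach as the paper: the paper first groups \eqref{eq43} into four corner terms $\bm{H}_{LD},\bm{H}_{RD},\bm{H}_{LU},\bm{H}_{RU}$ (with $\bm{H}_{LD}=\bm{U}_{LD}-\tfrac{1}{S_L}\bm{F}_{LD}-\tfrac{1}{S_D}\bm{G}_{LD}$, etc.) forming a convex combination, and then splits each $\bm{H}$ into two halves exactly as you do, invoking Lemma~\ref{lem2}(iii) on each half. Your eight-term identity is precisely this decomposition written out in full after clearing the denominator, so the arguments coincide.
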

\begin{proof}
	Assume that $S_L<0<S_R$ and $S_D<0<S_U$.
	Rewrite $\bm{U}^{\ast}$ in \eqref{eq43}
	as
	\begin{equation*}
		\bm{U}^{\ast}=\frac{1}{\mathscr{B}}\bigg(S_LS_D\bm{H}_{LD}
		-S_RS_D\bm{H}_{RD}-S_LS_U\bm{H}_{LU}+S_RS_U\bm{H}_{RU}\bigg),
	\end{equation*}
	with $\mathscr{B}=(S_R-S_L)(S_U-S_D)$ and
	\begin{equation*}
		\begin{aligned}
			&\bm{H}_{LD}=\bm{U}_{LD}-\frac{1}{S_L}\bm{F}_{LD}-\frac{1}{S_D}\bm{G}_{LD},~~
			\bm{H}_{RD}=\bm{U}_{RD}-\frac{1}{S_R}\bm{F}_{RD}-\frac{1}{S_D}\bm{G}_{RD},\\
			&\bm{H}_{LU}=\bm{U}_{LU}-\frac{1}{S_L}\bm{F}_{LU}-\frac{1}{S_U}\bm{G}_{LU},~~
			\bm{H}_{RU}=\bm{U}_{RU}-\frac{1}{S_R}\bm{F}_{RU}-\frac{1}{S_U}\bm{G}_{RU}.
		\end{aligned}
	\end{equation*}
	It means that $\bm{U}^\ast$ is a convex combination
	of $\bm{H}_{LD},\bm{H}_{RD}$, $\bm{H}_{LU},\bm{H}_{RU}$.
	Due to Lemma \ref{lem1}, 
	it is sufficient to check whether those
	$\bm{H}$-terms are in the admissible set $\mathcal{G}$.
	As an example, consider the term $\bm{H}_{LD}$,
	which can be decomposed  into two parts as follows
	\begin{equation*}
		\bm{H}_{LD}=\bm{U}_{LD}-\frac{1}{S_L}\bm{F}_{LD}-\frac{1}{S_D}\bm{G}_{LD}
		=\frac{1}{2}\bigg(\bm{U}_{LD}-\frac{2}{S_L}\bm{F}_{LD}\bigg)
		+\frac{1}{2}\bigg(\bm{U}_{LD}-\frac{2}{S_D}\bm{G}_{LD}\bigg).
	\end{equation*}
	The properties (\romannumeral2) and (\romannumeral3)
	in Lemma \ref{lem2} show
	the admissibility of
	$\bm{U}_{LD}-\frac{2}{S_L}\bm{F}_{LD}$ and $\bm{U}_{LD}-\frac{2}{S_D}\bm{G}_{LD}$ so is $\bm{H}_{LD}$.
	Similarly, one can show that other $\bm{H}$-terms are also admissible.
	The proof is completed.
\end{proof}

\section{Numerical schemes}\label{scheme}
This section presents the first- and high-order accurate PCP finite volume schemes with the above multidimensional HLL Riemann solver
for the special RHD equations \eqref{eq27}.

\subsection{First-order PCP scheme}
Consider 2D Cartesian mesh in $(x,y)$ space
$\{(x_{i+\frac{1}{2}},y_{j+\frac{1}{2}})|
\Delta x_i=x_{i+\frac{1}{2}}-x_{i-\frac{1}{2}},
\Delta y_j=y_{j+\frac{1}{2}}-y_{j-\frac{1}{2}},
i,j\in\mathbb Z\}$
and define the rectangular cell
$I_{ij} =[x_{i-\frac{1}{2}},x_{i+\frac{1}{2}}]\times[y_{j-\frac{1}{2}},
y_{j+\frac{1}{2}}]$.
Assume that $t_{n+1}=t_n+\Delta t^n$
with $t_0=0$, where $\Delta t^n$ is the time step-size at $t=t_n$ and to be determined
later, $n=0,1,2,\cdots$.
The numerical solutions
 at $t_n$ are reconstructed as a piecewise constant function
 by using the (approximate)
cell average values $\overline{\bm{U}}_{ij}^{n}$  of $\bm{U}$ at $t_n$  over the cell $I_{ij}$.

Integrating the RHD system \eqref{eq27} over the cell $I_{ij}\times [t_n,t_{n+1})$ 
%and using the forward Euler time discretization 
obtains the following finite volume scheme
\begin{equation}\label{eq20}
	\overline{\bm{U}}_{ij}^{n+1}=\overline{\bm{U}}_{ij}^{n}-\frac{\Delta t^n}{\Delta x_i}
	\big(\widehat{\bm{F}}_{i+\frac{1}{2},j}-\widehat{\bm{F}}_{i-\frac{1}{2},j}\big)-\frac{\Delta t^n}{\Delta y_j}
	\big(\widehat{\bm{G}}_{i,j+\frac{1}{2}}-\widehat{\bm{G}}_{i,j-\frac{1}{2}}\big),
\end{equation}
where
$\widehat{\bm{F}}_{i+\frac{1}{2},j}$ and $\widehat{\bm{G}}_{i,j+\frac{1}{2}}$ are the numerical fluxes approximating the
flux integrals
\begin{equation}\label{eq20bbb}
\begin{aligned}
&I^y_{i+\frac12,j}:=\frac{1}{\Delta t^n\Delta y_j}\int_{t_n}^{t_{n+1}}
\int_{y_{j-\frac{1}{2}}}^{y_{j+\frac{1}{2}}}\bm{F}(\bm{U}(x_{i+\frac{1}{2}},y,t_n))dydt,\\ 
&I^x_{i,j+\frac12}:=\frac{1}{\Delta t^n\Delta x_i}\int_{t_n}^{t_{n+1}}\int_{x_{i-\frac{1}{2}}}^{x_{i+\frac{1}{2}}}
\bm{G}(\bm{U}(x,y_{j+\frac{1}{2}},t_n))dxdt,
\end{aligned}\end{equation}
respectively, and $\Delta t^n$ is determined by the CFL type condition
%The standard CFL condition is given
\begin{equation}\label{eq39}
	\Delta t^n\le\sigma\min\limits_{i,j}\left\{\frac{\Delta x_i}
	{\max\big(|\lambda_A^{(1)}(\overline{\bm{U}}_{ij}^n)|,
		|\lambda_A^{(4)}(\overline{\bm{U}}_{ij}^n)|\big)},\frac{\Delta y_j}
	{\max\big(|\lambda_B^{(1)}(\overline{\bm{U}}_{ij}^n)|,|\lambda_B^{(4)}(\overline{\bm{U}}_{ij}^n)|\big)}\right\},
\end{equation}
here the CFL number $\sigma\le\frac{1}{2}$.
Noting that \eqref{eq20} can also be derived by
integrating the RHD system \eqref{eq27} over the cell $I_{ij}\times [t_n, t_{n+1})$.
%
%To get a genuinely multidimensional scheme,
Following \cite{barsara}, see Figure \ref{facefluxes}, the numerical fluxes $\widehat{\bm{F}}$ and $\widehat{\bm{G}}$ are contributed by the 1D Riemann solver at the center of the cell edge
and the 2D Riemann solver at two endpoints of the cell edge.
For example,  the numerical flux $\widehat{\bm{F}}_{i+\frac{1}{2},j}$ consists
of three parts: $\bm{F}_{i+\frac{1}{2},j}^{\ast\ast}$ computed from the 1D HLL Riemann
solver at the point $(x_{i+\frac12}, y_j)$
and $\bm{F}_{i+\frac{1}{2},j-\frac{1}{2}}^{\ast},\bm{F}_{i+\frac{1}{2},j+\frac{1}{2}}^{\ast}$ computed from the 2D
HLL Riemann solvers at $(x_{i+\frac{1}{2}},y_{j-\frac{1}{2}})$ and
$(x_{i+\frac{1}{2}},y_{j+\frac{1}{2}})$, respectively, where
\begin{equation}\label{eq44}
\begin{aligned}
&\bm{F}^{\ast\ast}_{i+\frac{1}{2},j}
=\widehat{\bm{F}}^{\text{\tiny 1d-HLL}}
(\bm{U}_{i+\frac{1}{2},j}^{L},\bm{U}_{i+\frac{1}{2},j}^{R}),\\
&\bm{F}^{\ast}_{i+\frac{1}{2},j\pm\frac{1}{2}}
=\widehat{\bm{F}}^{\text{\tiny 2d-HLL}}
\big(\bm{U}^{LD}_{i+\frac{1}{2},j\pm\frac{1}{2}},
\bm{U}^{LU}_{i+\frac{1}{2},j\pm\frac{1}{2}},\bm{U}^{RD}_{i+\frac{1}{2},j\pm\frac{1}{2}},
\bm{U}^{RU}_{i+\frac{1}{2},j\pm\frac{1}{2}}\big),
\end{aligned}
\end{equation}
with $\bm{U}_{i+\frac{1}{2},j}^{L},\bm{U}_{i+\frac{1}{2},j}^{R}$ being the left and right
limited approximations of $\bm{U}$ at the center of the edge $x=x_{i+\frac{1}{2}}$, and
$\bm{U}^{LD}_{i+\frac{1}{2},j\pm\frac{1}{2}},
\bm{U}^{RD}_{i+\frac{1}{2},j\pm\frac{1}{2}},\bm{U}^{LU}_{i+\frac{1}{2},j\pm\frac{1}{2}},
\bm{U}^{RU}_{i+\frac{1}{2},j\pm\frac{1}{2}}$ being the left-down, right-down, left-up and right-up
limited approximations of $\bm{U}$ at the node $(x_{i+\frac{1}{2}},y_{j\pm\frac{1}{2}})$, defined respectively by
\begin{equation}\label{add21}
	\begin{aligned}
		&\bm{U}_{i+\frac{1}{2},j}^{L}=\overline{\bm{U}}_{ij},
		~~~~~~~~\bm{U}_{i+\frac{1}{2},j}^{R}=\overline{\bm{U}}_{i+1,j},
		~~~~~\bm{U}^{LD}_{i+\frac{1}{2},j+\frac{1}{2}}=\overline{\bm{U}}_{ij},\\
		&\bm{U}^{RD}_{i+\frac{1}{2},j+\frac{1}{2}}=\overline{\bm{U}}_{i+1,j},
		~\bm{U}^{LU}_{i+\frac{1}{2},j+\frac{1}{2}}=\overline{\bm{U}}_{i,j+1},
		~~\bm{U}^{RU}_{i+\frac{1}{2},j+\frac{1}{2}}=\overline{\bm{U}}_{i+1,j+1}.
	\end{aligned}
\end{equation}
In practice, for the case of  $S_D<0<S_U$, see the left schematics of -eps-converted-to.pdf \ref{facefluxes}
and \ref{inter}, the numerical flux $\widehat{\bm{F}}_{i+\frac{1}{2},j}$ may be derived by
\begin{align*}
&
I^y_{i+\frac12,j}
=\frac{1}{\Delta t^n\Delta y_j}\int_{t_n}^{t_{n+1}}
\left\{
\int_{y_{j-\frac12}}^{y^U_{j-\frac12}}
+\int_{y^U_{j-\frac12}}^{y^D_{j+\frac12}}
+\int_{y^D_{j+\frac12}}^{y_{j+\frac12}}
\right\}
\bm{F}(\bm{U}(x_{i+\frac12},y,t_n)~dydt
\\
&\approx
\frac{S_{U,i+\frac{1}{2},j-\frac{1}{2}} \Delta t^n}{2\Delta y_j}
{\bm{F}}^*_{i+\frac{1}{2},j-\frac{1}{2}}
+\Big(1-\frac{(S_{U,i+\frac{1}{2},j-\frac{1}{2}}-S_{D,i+\frac{1}{2},j+\frac{1}{2}}) \Delta t^n}{2\Delta y_j}\Big)
{\bm{F}}^{**}_{i+\frac{1}{2},j}
-\frac{S_{D,i+\frac{1}{2},j+\frac{1}{2}} \Delta t^n}{2\Delta y_j}
{\bm{F}}^*_{i+\frac{1}{2},j+\frac{1}{2}},
\end{align*}
under the assumption of
$\frac{(S_{U,i+\frac{1}{2},j-\frac{1}{2}}-S_{D,i+\frac{1}{2},j+\frac{1}{2}}) \Delta t^n}{2\Delta y_j}\leq 1$, where
$y^U_{j-\frac12}:=y_{j-\frac12}+S_{U,i+\frac12,j-\frac12}\Delta t^n$,
$y^D_{j+\frac12}:=y_{j+\frac12}+S_{D,i+\frac12,j+\frac12}\Delta t^n$.
%$\frac{S_U \Delta t^n}{2\Delta y_j}\leq 1$, $-\frac{S_D \Delta t^n}{2\Delta y_j} \leq 1$.
Similarly, the numerical flux $\widehat{\bm{G}}_{i,j+\frac{1}{2}}$ consists of $\bm{G}_{i,j+\frac{1}{2}}^{\ast\ast}$ and
$\bm{G}_{i\pm\frac{1}{2},j+\frac{1}{2}}^{\ast}$, which are computed from the 1D HLL Riemann
solver at $(x_i, y_{j+\frac12})$ and the 2D HLL Riemann solvers
at $(x_{i-\frac{1}{2}},y_{j+\frac{1}{2}})$ and
$(x_{i+\frac{1}{2}},y_{j+\frac{1}{2}})$ respectively, and may also be derived by approximating the second flux integral 
in \eqref{eq20bbb} under the assumption of\\
$\frac{(S_{R,i-\frac{1}{2},j+\frac{1}{2}}-S_{L,i+\frac{1}{2},j+\frac{1}{2}}) \Delta t^n}{2\Delta x_i}\leq 1$.
%$\frac{S_R \Delta t^n}{2\Delta x_i}\leq 1$, $-\frac{S_L \Delta t^n}{2\Delta x_i} \leq 1$.
%For the first-order accurate scheme \eqref{eq20}, those above limited approximations can be calculated from the reconstructed
%piecewise constant function, namely
With the help of the definitions of $S_L^{-},S_R^{+},S_D^{-},S_U^{+}$  in \eqref{eq29}, %Figure \ref{facefluxes},
the numerical fluxes $\widehat{\bm{F}}$ and $\widehat{\bm{G}}$ in \eqref{eq20} are finally given by
\begin{align}
	\widehat{\bm{F}}_{i+\frac{1}{2},j}&=\frac{\Delta t^n}{2\Delta y_j}
S_{U,i+\frac{1}{2},j-\frac{1}{2}}^{+}\bm{F}^{\ast}_{i+\frac{1}{2},j-\frac{1}{2}}
	-\frac{\Delta t^n}{2\Delta y_j}S_{D,i+\frac{1}{2},j+\frac{1}{2}}^{-}\bm{F}^{\ast}_{i+\frac{1}{2},j+\frac{1}{2}}\notag\\
	&~~~+\left(1-\frac{\Delta t^n}{2\Delta y_j}
	(S_{U,i+\frac{1}{2},j-\frac{1}{2}}^{+}-S_{D,i+\frac{1}{2},j+\frac{1}{2}}^{-})\right)
	\bm{F}_{i+\frac{1}{2},j}^{\ast\ast},\label{add23}\\
	\widehat{\bm{G}}_{i,j+\frac{1}{2}}&=\frac{\Delta t^n}{2\Delta x_i}
S_{R,i-\frac{1}{2},j+\frac{1}{2}}^{+}\bm{G}^{\ast}_{i-\frac{1}{2},j+\frac{1}{2}}
	-\frac{\Delta t^n}{2\Delta x_i}S_{L,i+\frac{1}{2},j+\frac{1}{2}}^{-}\bm{G}^{\ast}_{i+\frac{1}{2},j+\frac{1}{2}}\notag\\
	&~~~+\left(1-\frac{\Delta t^n}{2\Delta x_i}
	(S_{R,i-\frac{1}{2},j+\frac{1}{2}}^{+}-S_{L,i+\frac{1}{2},j+\frac{1}{2}}^{-})\right)
	\bm{G}_{i,j+\frac{1}{2}}^{\ast\ast},\label{add24}
\end{align}  
under the time stepsize constraint %$\Delta t^n$ in \eqref{eq20}
\begin{equation}\label{dt}
\Delta t^n\le \min\limits_{i,j}\left(\frac{2\Delta x_i}{S^+_{R,i-\frac12,j\pm \frac12}-S^-_{L,i+\frac12,j\pm \frac12}},
%\frac{2\Delta x_i}{S_{R,i-\frac12,j-\frac12}-S_{L,i+\frac12,j-\frac12}},
\frac{2\Delta y_j}{S^+_{U,i\pm\frac12,j-\frac12}-S^-_{D,i\pm\frac12,j+\frac12}}
%\frac{2\Delta y_j}{S_{U,i-\frac12,j-\frac12}-S_{D,i-\frac12,j+\frac12}}
\right),
\end{equation}
which is weaker than  \eqref{eq39}.
\begin{figure}[H]
	\begin{minipage}{0.55\textwidth}
		\centering
		\includegraphics[width=3.5in]{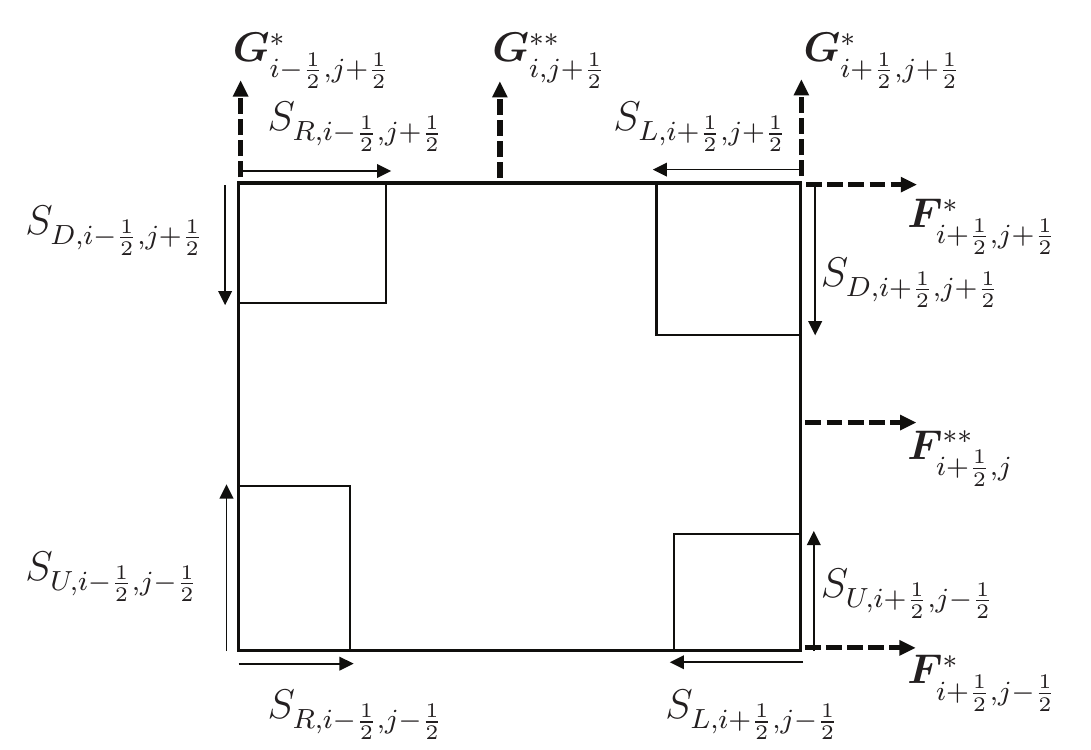}
	\end{minipage}
	\begin{minipage}{0.45\textwidth}
		\centering
		\includegraphics[width=2.6in]{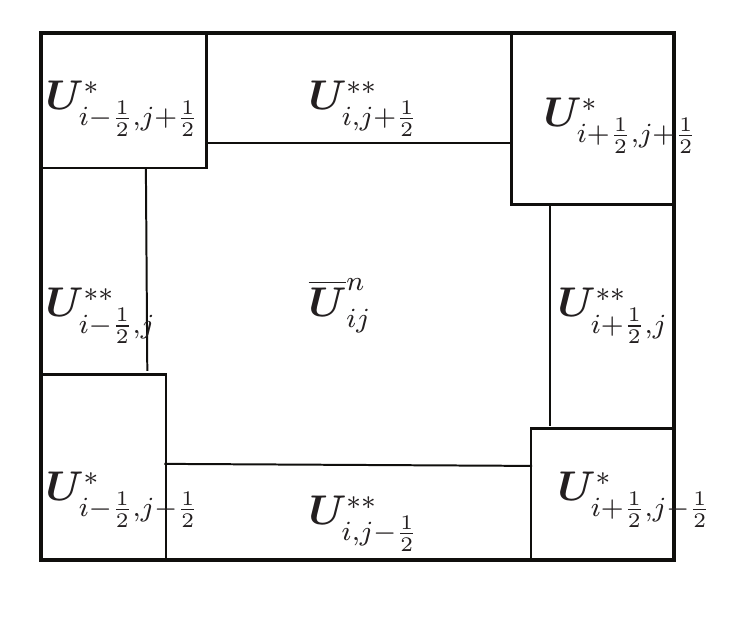}
	\end{minipage}
	\caption{Illustration for the numerical fluxes (left) and approximate solution (right) for the numerical scheme
		\eqref{eq20}.}
	\label{inter}
\end{figure}

Let us now discuss the PCP property of the scheme \eqref{eq20} with the numerical
fluxes  \eqref{add23} and \eqref{add24}.
According to the 1D and 2D HLL Riemann solvers,
under the CFL condition \eqref{eq39} with $\sigma\le\frac{1}{2}$,
$\overline{\bm{U}}_{ij}^{n+1}$ in the scheme
\eqref{eq20} can be written as an exact integration of those approximate Riemann solutions
over the cell $I_{ij}$, namely
\begin{equation*}%\label{eqadd}
\begin{aligned}
\overline{\bm{U}}_{ij}^{n+1}&=\frac{1}{\Delta x_i\Delta y_j}
\bigg(\int_{\Omega_1}R_h(x/t,y/t,\overline{\bm{U}}_{i-1,j-1}^n,\overline{\bm{U}}_{i-1,j}^n,
\overline{\bm{U}}_{i,j-1}^n,\overline{\bm{U}}_{ij}^n)dxdy
+\int_{\Omega_5}\widetilde{R}_h(x/t,\overline{\bm{U}}_{i-1,j}^n,\overline{\bm{U}}_{ij}^n)dxdy\\
&~~~+\int_{\Omega_2}R_h(x/t,y/t,\overline{\bm{U}}_{i,j-1}^n,\overline{\bm{U}}_{i,j}^n,
\overline{\bm{U}}_{i+1,j-1}^n,\overline{\bm{U}}_{i+1,j}^n)dxdy
+\int_{\Omega_6}\widetilde{R}_h(x/t,\overline{\bm{U}}_{ij}^n,\overline{\bm{U}}_{i+1,j}^n)dxdy\\
&~~~+\int_{\Omega_3}R_h(x/t,y/t,\overline{\bm{U}}_{i-1,j}^n,\overline{\bm{U}}_{i-1,j+1}^n,
\overline{\bm{U}}_{ij}^n,\overline{\bm{U}}_{i,j+1}^n)dxdy
+\int_{\Omega_7}\widetilde{R}_h(y/t,\overline{\bm{U}}_{i,j-1}^n,\overline{\bm{U}}_{ij}^n)dxdy\\
&~~~+\int_{\Omega_4}R_h(x/t,y/t,\overline{\bm{U}}_{ij}^n,\overline{\bm{U}}_{i,j+1}^n,
\overline{\bm{U}}_{i+1,j}^n,\overline{\bm{U}}_{i+1,j+1}^n)dxdy
+\int_{\Omega_8}\widetilde{R}_h(y/t,\overline{\bm{U}}_{ij}^n,\overline{\bm{U}}_{i,j+1}^n)dxdy\\
&~~~+\int_{I_{ij}\setminus\bigcup\limits_{m=1}^8\Omega_m}\overline{\bm{U}}_{ij}^ndxdy\bigg),
\end{aligned}
\end{equation*}
where $R_h(x/t,y/t,\overline{\bm{U}}_1,\overline{\bm{U}}_2,
\overline{\bm{U}}_3,\overline{\bm{U}}_4)$ is the approximate solution of the 2D Riemann problem
with  four initial states $\overline{\bm{U}}_1,\overline{\bm{U}}_2,
\overline{\bm{U}}_3$ and $\overline{\bm{U}}_4$,
$\widetilde{R}_h(z/t,\overline{\bm{U}}_1,\overline{\bm{U}}_{2})$
is the approximate solution of 1D Riemann problems in $z$-direction with two initial states
$\overline{\bm{U}}_1,\overline{\bm{U}}_2$, $z=x,y$,
and
$$\begin{aligned}
\Omega_1&=\big[x_{i-\frac{1}{2}},x_{i-\frac{1}{2}}+\Delta t^nS_{R,i-\frac{1}{2},j-\frac{1}{2}}^+\big]
\times\big[y_{j-\frac{1}{2}},y_{j-\frac{1}{2}}+\Delta t^nS_{U,i-\frac{1}{2},j-\frac{1}{2}}^+\big],\\
\Omega_2&=\big[x_{i+\frac{1}{2}}+\Delta t^nS_{L,i+\frac{1}{2},j-\frac{1}{2}}^-,x_{i+\frac{1}{2}}\big]
\times\big[y_{j-\frac{1}{2}},y_{j-\frac{1}{2}}+\Delta t^n{S_{U,i+\frac{1}{2},j-\frac{1}{2}}^+}\big],\\
\Omega_3&=\big[x_{i-\frac{1}{2}},x_{i-\frac{1}{2}}+\Delta t^nS_{R,i-\frac{1}{2},j-\frac{1}{2}}^+\big]
\times\big[y_{j+\frac{1}{2}}+\Delta t^nS_{D,i-\frac{1}{2},j+\frac{1}{2}}^-,y_{j+\frac{1}{2}}\big],\\
\Omega_4&=\big[x_{i+\frac{1}{2}}+\Delta t^nS_{L,i+\frac{1}{2},j-\frac{1}{2}}^-,x_{i+\frac{1}{2}}\big]
\times\big[y_{j+\frac{1}{2}}+\Delta t^nS_{D,i+\frac{1}{2},j+\frac{1}{2}}^-,y_{j+\frac{1}{2}}\big],\\
\Omega_5&=\big[x_{i-\frac{1}{2}},x_{i-\frac{1}{2}}+\Delta t^nS_1^+\big]
\times\big[y_{j-\frac{1}{2}}+\Delta t^nS_{U,i-\frac{1}{2},j+\frac{1}{2}}^+,y_{j+\frac{1}{2}}
+\Delta t^nS_{D,i-\frac{1}{2},j+\frac{1}{2}}^-\big],\\
\Omega_6&=\big[x_{i+\frac{1}{2}}+\Delta t^nS_2^-,x_{i+\frac{1}{2}}\big]
\times\big[y_{j-\frac{1}{2}}+\Delta t^nS_{U,i+\frac{1}{2},j+\frac{1}{2}}^+,y_{j+\frac{1}{2}}
+\Delta t^nS_{D,i+\frac{1}{2},j+\frac{1}{2}}^-\big],\\
\Omega_7&=\big[x_{i-\frac{1}{2}}+\Delta t^nS_{R,i-\frac{1}{2},j-\frac{1}{2}}^+,
x_{i+\frac{1}{2}}+\Delta t^nS_{L,i+\frac{1}{2},j-\frac{1}{2}}^+\big]
\times\big[y_{j-\frac{1}{2}},y_{j-\frac{1}{2}}+\Delta t^nS_3^+\big],\\
\Omega_8&=\big[x_{i-\frac{1}{2}}+\Delta t^nS_{R,i-\frac{1}{2},j+\frac{1}{2}}^+,
x_{i+\frac{1}{2}}+\Delta t^nS_{L,i+\frac{1}{2},j+\frac{1}{2}}^+\big]
\times\big[y_{j+\frac{1}{2}}+\Delta t^nS_4^-,y_{j+\frac{1}{2}}\big],\\
\end{aligned}$$
here $S^-=\min(0,S),S^+=\max(0,S)$, $S_1$ and $S_2$ are the largest and smallest wave speeds
in the 1D Riemann HLL solver for two $x-$directional Riemann problems denoted by RP$\{\overline{\bm{U}}_{i-1,j}^n,\overline{\bm{U}}_{ij}^n\}$
and RP$\{\overline{\bm{U}}_{ij}^n,\overline{\bm{U}}_{i+1,j}^n\}$, $S_3$ and $S_4$ are the largest and smallest wave speeds
in the 1D Riemann HLL solver for two $y-$directional Riemann problems denoted by RP$\{\overline{\bm{U}}_{i,j-1}^n,\overline{\bm{U}}_{ij}^n\}$
and RP$\{\overline{\bm{U}}_{ij}^n,\overline{\bm{U}}_{i,j+1}^n\}$. Based on the aforementioned 1D and 2D HLL Riemann solvers,
one can get
$$\begin{aligned}
&\int_{\Omega_1}R_h(x/t,y/t,\overline{\bm{U}}_{i-1,j-1}^n,\overline{\bm{U}}_{i-1,j}^n,
\overline{\bm{U}}_{i,j-1}^n,\overline{\bm{U}}_{ij}^n)dxdy=|\Omega_1|\bm{U}_{i-\frac{1}{2},j-\frac{1}{2}}^{\ast},\\
&\int_{\Omega_2}R_h(x/t,y/t,\overline{\bm{U}}_{i,j-1}^n,\overline{\bm{U}}_{i,j}^n,
\overline{\bm{U}}_{i+1,j-1}^n,\overline{\bm{U}}_{i+1,j}^n)dxdy=|\Omega_2|\bm{U}_{i+\frac{1}{2},j-\frac{1}{2}}^{\ast},\\
&\int_{\Omega_3}R_h(x/t,y/t,\overline{\bm{U}}_{i-1,j}^n,\overline{\bm{U}}_{i-1,j+1}^n,
\overline{\bm{U}}_{ij}^n,\overline{\bm{U}}_{i,j+1}^n)dxdy=|\Omega_3|\bm{U}_{i-\frac{1}{2},j+\frac{1}{2}}^{\ast},\\
&\int_{\Omega_4}R_h(x/t,y/t,\overline{\bm{U}}_{ij}^n,\overline{\bm{U}}_{i,j+1}^n,
\overline{\bm{U}}_{i+1,j}^n,\overline{\bm{U}}_{i+1,j+1}^n)dxdy=|\Omega_4|\bm{U}_{i+\frac{1}{2},j+\frac{1}{2}}^{\ast},\\
&\int_{\Omega_5}\widetilde{R}_h(x/t,\overline{\bm{U}}_{i-1,j}^n,\overline{\bm{U}}_{ij}^n)dxdy=|\Omega_5|\bm{U}_{i-\frac{1}{2},j}^{\ast\ast},
~~~\int_{\Omega_6}\widetilde{R}_h(x/t,\overline{\bm{U}}_{ij}^n,\overline{\bm{U}}_{i+1,j}^n)dxdy=|\Omega_6|\bm{U}_{i+\frac{1}{2},j}^{\ast\ast},\\
&\int_{\Omega_7}\widetilde{R}_h(y/t,\overline{\bm{U}}_{i,j-1}^n,\overline{\bm{U}}_{ij}^n)dxdy=|\Omega_7|\bm{U}_{i,j-\frac{1}{2}}^{\ast\ast},
~~~\int_{\Omega_8}\widetilde{R}_h(y/t,\overline{\bm{U}}_{ij}^n,\overline{\bm{U}}_{i,j+1}^n)dxdy=|\Omega_8|\bm{U}_{i,j+\frac{1}{2}}^{\ast\ast},
\end{aligned}$$
where $|\Omega_m|$ stands for the area of the domain $\Omega_m$, $m=1,2,\cdots,8$,
the terms with the superscripts ``$\ast$'' and ``$\ast\ast$''
are obtained in the 2D and 1D HLL
Riemann solvers, respectively.
Clearly,
for the the non-trivial case of  $S_L<0<S_R,S_D<0<S_U$, the updated solution
$\overline{\bm{U}}_{ij}^{n+1}$ can be reformulated as a convex combination of nine terms:
$\overline{\bm{U}}_{ij}^n,\bm{U}_{i-\frac{1}{2},j-\frac{1}{2}}^{\ast},\bm{U}_{i+\frac{1}{2},j-\frac{1}{2}}^{\ast}$,
$\bm{U}_{i-\frac{1}{2},j+\frac{1}{2}}^{\ast},\bm{U}_{i-\frac{1}{2},j+\frac{1}{2}}^{\ast}$ and
$\bm{U}_{i-\frac{1}{2},j}^{\ast\ast},\bm{U}_{i+\frac{1}{2},j}^{\ast\ast},
\bm{U}_{i,j-\frac{1}{2}}^{\ast\ast},\bm{U}_{i,j+\frac{1}{2}}^{\ast\ast}$, see
the right schematic of Figure \ref{inter}.
On the other hand, it is obvious to know that
each term in the convex combination is admissible, see Section \ref{multi-hlle},
so that the numerical solution
$\overline{\bm{U}}_{ij}^{n+1}$ to the first-order scheme \eqref{eq20} with \eqref{add21}-\eqref{add24} is also admissible.
%The previous section has shown the conclusion
%that each term in the convex combination is admissible
We conclude such result in the following theorem.

\begin{theorem}\label{thm2}
	If $\overline{\bm{U}}_{ij}^{n}\in \mathcal G$, for all $i,j\in \mathbb Z$, and the wave speeds $S_D,S_U,S_L,S_R$ are estimated
	by \eqref{eq2} {with $\alpha=2$},
	then $\overline{\bm{U}}_{ij}^{n+1}$ obtained by
	the first-order scheme \eqref{eq20} with \eqref{add21}--\eqref{add24} and the multidimensional Riemann solver
	belongs to the admissible state set $\mathcal G$
	under the time step size restriction \eqref{eq39}
	with   $\sigma\le\frac{1}{2}$.
\end{theorem}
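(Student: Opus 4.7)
The plan is to realize $\overline{\bm{U}}_{ij}^{n+1}$ as a convex combination of states in $\mathcal{G}$ and then invoke the convexity of $\mathcal{G}$ (Lemma \ref{lem1}). The first step is to verify that, under \eqref{eq39} with $\sigma\le\tfrac{1}{2}$, the update \eqref{eq20} with fluxes \eqref{add23}--\eqref{add24} coincides exactly with the spatial average over $I_{ij}$ of the piecewise juxtaposition of local approximate Riemann solutions emanating from the four edges and four corners of $I_{ij}$. This follows by integrating \eqref{eq27} over $I_{ij}\times[t_n,t_{n+1})$: the three summands in \eqref{add23} correspond precisely to the three constant-state patches (upper 2D fan, central 1D fan, lower 2D fan) traversed by the edge $x=x_{i+\frac{1}{2}}$ under the composite Riemann solution, with weights equal to the lengths of each patch; the identity for $\widehat{\bm{G}}_{i,j+\frac12}$ is analogous.

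Next, I would describe the decomposition explicitly in the non-trivial case $S_L<0<S_R$, $S_D<0<S_U$. The cell $I_{ij}$ is partitioned into nine pairwise-disjoint subregions: four corner rectangles $\Omega_1,\ldots,\Omega_4$ on which the composite solution equals the 2D HLL intermediate state $\bm{U}^\ast$ at the corresponding vertex, four edge rectangles $\Omega_5,\ldots,\Omega_8$ on which it equals the 1D HLL intermediate state $\bm{U}^{\ast\ast}$ at the corresponding edge midpoint, and a central unaffected region on which it remains $\overline{\bm{U}}_{ij}^n$. The displayed formulas in the paragraph preceding the theorem already record the explicit geometry; the key point to verify is that these nine regions really are pairwise disjoint and fit inside $I_{ij}$, which is exactly what $\sigma\le\tfrac{1}{2}$ in \eqref{eq39} guarantees, because then every local wave front propagates at most half a cell width in one time step and wave fans from neighbouring corners/edges cannot interact.

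With the decomposition in hand, $\overline{\bm{U}}_{ij}^{n+1}=\sum_m \theta_m \bm{U}_m$ becomes a genuine convex combination with non-negative weights $\theta_m=|\Omega_m|/(\Delta x_i\Delta y_j)$ summing to one (the central region absorbs the remaining mass). The four corner states $\bm{U}^\ast$ lie in $\mathcal{G}$ by Theorem \ref{thm1}, where the choice $\alpha=2$ in \eqref{eq2} is essential; the four edge states $\bm{U}^{\ast\ast}$ lie in $\mathcal{G}$ by the standard 1D HLL PCP result of \cite{ling}; and the central state $\overline{\bm{U}}_{ij}^n$ lies in $\mathcal{G}$ by hypothesis. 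Lemma \ref{lem1} then delivers $\overline{\bm{U}}_{ij}^{n+1}\in\mathcal{G}$.

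The main obstacle I expect is the geometric bookkeeping in the second step: one must verify that the nine subregions listed before the theorem are genuinely pairwise disjoint under $\sigma\le\tfrac{1}{2}$ and that the coefficients $\tfrac{S^\pm\Delta t^n}{2\Delta y_j}$ and $\tfrac{S^\pm\Delta t^n}{2\Delta x_i}$ in \eqref{add23}--\eqref{add24} really are the edge-lengths of the corresponding patches of the composite Riemann solution. The remaining cases, where some of $S_L,S_R,S_D,S_U$ share a sign or vanish, collapse the decomposition to fewer pieces and reduce to the purely 1D HLL PCP argument, so they can be dispatched by a short remark.
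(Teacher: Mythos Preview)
Your proposal is correct and follows essentially the same approach as the paper: both arguments express $\overline{\bm{U}}_{ij}^{n+1}$ as the cell average of the composite approximate Riemann solution, decompose $I_{ij}$ into the nine subregions $\Omega_1,\ldots,\Omega_8$ plus the central unaffected zone, and then conclude by convexity of $\mathcal{G}$ together with Theorem~\ref{thm1} for the corner states $\bm{U}^\ast$, the 1D HLL PCP result for the edge states $\bm{U}^{\ast\ast}$, and the hypothesis for $\overline{\bm{U}}_{ij}^n$. The geometric bookkeeping you flag as the main obstacle is exactly what the paper records in the displayed definitions of $\Omega_1,\ldots,\Omega_8$ and the integral identities preceding the theorem, and the CFL condition \eqref{eq39} with $\sigma\le\tfrac12$ is used in the same way to guarantee non-interaction.
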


\begin{remark}
The numerical fluxes in \eqref{add23}-\eqref{add24} may be further extended as
\begin{align}
	\widehat{\bm{F}}_{i+\frac{1}{2},j}&
	=\alpha \bm{F}^{\ast}_{i+\frac{1}{2},j-\frac{1}{2}}
	+\beta \bm{F}^{\ast}_{i+\frac{1}{2},j+\frac{1}{2}}
	+\left(1-(\alpha+\beta)\right)
	\bm{F}_{i+\frac{1}{2},j}^{\ast\ast},\label{add23-2}
	\\
	\widehat{\bm{G}}_{i,j+\frac{1}{2}}&
	=\tilde{\alpha}\bm{G}^{\ast}_{i-\frac{1}{2},j+\frac{1}{2}}
	+\tilde{\beta}\bm{G}^{\ast}_{i+\frac{1}{2},j+\frac{1}{2}}
	+\left(1-(\tilde\alpha+\tilde\beta)\right)
	\bm{G}_{i,j+\frac{1}{2}}^{\ast\ast},\label{add24-2}
\end{align}
which are similarly obtained  by approximating 
the flux integrals $I^y_{i+\frac12,j}$ and $I^x_{i,j+\frac12}$,
for example, 
in the case of  $S_D<0<S_U$, $S_L<0<S_R$, 
\begin{align*}
&I^y_{i+\frac12,j}
%=\frac{1}{\Delta y_j}\int_{y_{j-\frac12}}^{y_{j+\frac12}}\bm{F}(\bm{U}(x_{i+\frac12},y,t_n)~dy
\approx\alpha{\bm{F}}^*_{i+\frac{1}{2},j-\frac{1}{2}}
+\big(1-(\alpha+\beta)\big){\bm{F}}^{**}_{i+\frac{1}{2},j}
+\beta {\bm{F}}^*_{i+\frac{1}{2},j+\frac{1}{2}},\\
& I^x_{i,j+\frac12}%=\frac{1}{\Delta x_i}\int_{x_{i-\frac12}}^{x_{i+\frac12}}\bm{G}(\bm{U}(x,y_{j+\frac12},y,t_n)~dx
\approx\tilde\alpha{\bm{G}}^*_{i-\frac{1}{2},j+\frac{1}{2}}
+\big(1-(\tilde\alpha+\tilde\beta)\big){\bm{G}}^{**}_{i,j+\frac{1}{2}}
+\tilde\beta {\bm{G}}^*_{i+\frac{1}{2},j+\frac{1}{2}},
\end{align*}
where $\alpha$, $\beta$, $\tilde{\alpha}$, and $\tilde{\beta}$ satisfy
$\alpha,\beta\geq 0$, $\alpha+\beta\leq 1$,
$\tilde\alpha,\tilde\beta\geq 0$, and $\tilde\alpha+\tilde\beta\leq 1$.
Obviously, those may be related to the weights of the Simpson rule or the three-point Gauss-Lobatto quadrature.
It is worth noting that the PCP property as in Theorem \ref{thm2} may be preserved by  the scheme \eqref{eq20} with \eqref{add23-2}--\eqref{add24-2} under some suitable CFL-type conditions.
\end{remark}

\subsection{High-order PCP scheme}

This subsection develops the high-order accurate PCP scheme for \eqref{eq27} with the previous 2D HLL Riemann solver, the high-order initial reconstruction, the high-order  approximation of the flux integrals
  \eqref{eq20bbb}, and the PCP flux limiter as well as the explicit SSP Runge-Kutta time discretization. Here the Gauss-Lobatto quadrature  with $K$ points and  weights $\{\omega_\alpha: \sum_{\alpha=1}^K \omega_\alpha=1\}$ 
  is used to calculate the flux integrals \eqref{eq20bbb}  in order to involve the 2D HLL Riemann solver, where
  $2K-3\ge r$ for a $(r+1)$th-order accurate scheme for \eqref{eq27}. %, we consider a Gauss-Lobatto quadrature rule with $K$ points, which is exact for single variable polynomials of degree $2K-3\ge r$ .

Denote the Gauss-Lobatto quadrature points  on the intervals $[x_{i-\frac{1}{2}},x_{i+\frac{1}{2}}]$ and $[y_{j-\frac{1}{2}},y_{j+\frac{1}{2}}]$ respectively as follows
\begin{equation}\label{eq50}
	S_i^x=\left\{x_i^\alpha,~ \alpha=1,2,\cdots,K\right\},
\end{equation}
 and
\begin{equation}\label{eq51}
	S_j^y=\left\{y_j^\beta,~\beta=1,2,\cdots,K\right\},
\end{equation}
and define the Gauss-Lobatto quadrature points on the cell $I_{ij}$
by
$$S_{ij}=
\left\{(x_i^\alpha, y_j^\beta),~\alpha,\beta=1,2,\cdots,K\right\}.
$$
%For instance, $(x_{i+\frac{1}{2}},y_j^\alpha)~(\alpha=1,2,\cdots,K)$ are the
%Gauss-Lobatto quadrature points on the
%right edge of the cell $I_{ij}$. 

By using the (approximate)
cell average values $\overline{\bm{U}}_{ij}^{n}$  of $\bm{U}$ at $t_n$  over the cell $I_{ij}$, the dimension by dimension WENO reconstruction \cite{shu} with the local characteristic decomposition
is performed in the $x$- and $y$-directions respectively to get the high-order WENO approximations of $\bm{U}$  at those quadrature points
$S_i^x$, $S_j^y$, $S_{ij}$, denoted respectively by
$$
\bm{U}^n_{ij}(x_{i+\frac12}, y_j^\beta), \quad
\bm{U}^n_{ij}(x_i^\alpha, y_{j+\frac12}),\quad
\bm{U}^n_{ij}(x_i^\alpha, y_{j}^\beta).
$$
 The readers are also referred to \cite{zhao} for details.
The numerical solutions
 at $t_n$ can further be reconstructed as a piecewise polynomial
with $\bm{U}_{ij}^{n}(x,y)$ for $(x,y)\in I_{ij}$ by using the Lagrangian interpolation with the point values $\bm{U}^n_{ij}(x_i^\alpha, y_{j}^\beta)$.

Based on the above reconstruction and the Gauss-Lobatto quadrature with $K$ points, 
the flux integrals in \eqref{eq20bbb} can be approximately calculated as follows
%
% the numerical fluxes $\widehat{\bm{F}}_{i+\frac{1}{2},j}$ and
%$\widehat{\bm{G}}_{i,j+\frac{1}{2}}$ as follows
\begin{equation}\label{add}
\begin{aligned}
&%\frac{1}{\Delta y_j}\int_{y_{j-\frac{1}{2}}}^{y_{j+\frac{1}{2}}}\bm{F}(\bm{U}(x_{i+\frac{1}{2}},y,t_n))dy
I^y_{i+\frac12,j}
\approx\sum\limits_{\beta=1}^K\omega_\beta \bm{F}(\bm{U}^n_{ij}(x_{i+\frac{1}{2}},y_j^\beta))
\approx\sum_{\beta=1}^K\omega_\beta\widehat{\bm{F}}_{i+\frac{1}{2},
j_\beta}=:\widehat{\bm{F}}^{\text{high}}_{i+\frac{1}{2},j},\\
&%\frac{1}{\Delta x_i}\int_{x_{i-\frac{1}{2}}}^{x_{i+\frac{1}{2}}}\bm{G}(\bm{U}(x,y_{j+\frac{1}{2}},t_n))dx
I^x_{i,j+\frac12}
\approx\sum\limits_{\alpha=1}^K\tilde\omega_\alpha
\bm{G}(\bm{U}^n_{ij}(x_i^\alpha,y_{j+\frac{1}{2}}))
\approx\sum_{\alpha=1}^K\tilde\omega_\alpha\widehat{\bm{G}}_{i_\alpha,j
+\frac{1}{2}}=:\widehat{\bm{G}}^{\text{high}}_{i,j+\frac{1}{2}},
\end{aligned}
\end{equation}
where $\{\omega_\beta\}$ and $\{\tilde\omega_\alpha\}$ denote the quadrature weights
for $I^y_{i+\frac12,j}$ and $I^x_{i,j+\frac12}$,
$\widehat{\bm{F}}_{i+\frac{1}{2},j_\beta}$ and $\widehat{\bm{G}}_{i_\alpha,j+\frac{1}{2}}$
denote the numerical fluxes, evaluated at quadrature points $(x_{i+\frac{1}{2}},y_j^\beta)$
and $(x_i^\alpha,y_{j+\frac{1}{2}})$  with the 1D   or 2D HLL Riemann solver,
respectively. Specially, we have
\begin{equation}\label{eq52}
	\begin{aligned}
		\widehat{\bm{F}}^{\text{high}}_{i\pm\frac{1}{2},j}&=\sum\limits_{\beta=1}^K\omega_\beta\widehat{\bm{F}}_{i\pm\frac{1}{2},j_\beta}
		=\omega_1(\bm{F}_{i\pm\frac{1}{2},j-\frac{1}{2}}^\ast+\bm{F}_{i\pm\frac{1}{2},j+\frac{1}{2}}^\ast)
		+\sum\limits_{\beta=2}^{K-1}\omega_\beta\bm{F}_{i\pm\frac{1}{2},j_\beta}^{\ast\ast},\\
		\widehat{\bm{G}}^{\text{high}}_{i,j\pm\frac{1}{2}}&=
		\sum\limits_{\alpha=1}^K\tilde\omega_\alpha\widehat{\bm{G}}_{i_\alpha,j\pm\frac{1}{2}}
		=\tilde\omega_1(\bm{G}_{i-\frac{1}{2},j\pm\frac{1}{2}}^\ast+\bm{G}_{i+\frac{1}{2},j\pm\frac{1}{2}}^\ast)
		+\sum\limits_{\alpha=2}^{K-1}\tilde\omega_\alpha\bm{G}_{i_\alpha,j+\frac{1}{2}}^{\ast\ast},
	\end{aligned}
\end{equation}
Then the scheme \eqref{eq20} becomes
	\begin{align}%\nonumber
		\overline{\bm{U}}_{ij}^{n+1}&=\overline{\bm{U}}_{ij}^{n}
		-\frac{\Delta t^n}{\Delta x_i}
		\big(\widehat{\bm{F}}^{\text{high}}_{i+\frac{1}{2},j}-\widehat{\bm{F}}^{\text{high}}_{i-\frac{1}{2},j}\big)-\frac{\Delta t^n}{\Delta y_j}
		\big(\widehat{\bm{G}}^{\text{high}}_{i,j+\frac{1}{2}}-\widehat{\bm{G}}^{\text{high}}_{i,j-\frac{1}{2}}\big).\label{eq53}
%		&=\overline{\bm{U}}_{ij}^{n}-a_1
%		\sum\limits_{\beta=1}^{K}\omega_\beta\big(\widehat{\bm{F}}_{i+\frac{1}{2},j_\beta}
%		-\widehat{\bm{F}}_{i-\frac{1}{2},j_\beta}\big)-a_2
%		\sum\limits_{\alpha=1}^{K}\tilde\omega_\alpha\big(\widehat{\bm{G}}_{i_\alpha,j+\frac{1}{2}}
%		-\widehat{\bm{G}}_{i_\alpha,j-\frac{1}{2}}\big),		
	\end{align}

In general, the high-order accurate scheme \eqref{eq53} with the numerical fluxes \eqref{eq52} does not satisfy the PCP property, namely, we can not
guarantee that $(\overline{\bm{U}}_{ij}^{n+1})^{\text{high}}:=\overline{\bm{U}}_{ij}^{n+1}$ obtained from \eqref{eq53} belongs to the admissible set $\mathcal{G}$. Here, we utilize the
PCP flux limiter  in \cite{wu2015} %to preserve the positivity of $D(U)$ and $q(\bm{U})=E-\sqrt{D^2+|\bm{m}|^2}$
to get the following high-order PCP scheme 
\begin{equation}\label{PCPscheme}
\overline{\bm{U}}_{ij}^{n+1}=\overline{\bm{U}}_{ij}^n-\frac{\Delta t^n}{\Delta x_i}\big(\widehat{\bm{F}}_{i+\frac12,j}^{\text{PCP}}
-\widehat{\bm{F}}_{i-\frac12,j}^{\text{PCP}}\big)-\frac{\Delta t^n}{\Delta y_j}\big(\widehat{\bm{G}}_{i,j+\frac12}^{\text{PCP}}-
\widehat{\bm{G}}_{i,j-\frac12}^{\text{PCP}}\big),
\end{equation}
where
\begin{align} &\widehat{\bm{F}}_{i\pm\frac12,j}^{\rm PCP}=
\big(1-\theta^x_{i\pm\frac12,j}\big)\widehat{\bm{F}}_{i\pm\frac12,j}^{\rm low}
+\theta^x_{i\pm\frac12,j}\widehat{\bm{F}}_{i\pm\frac12,j}^{\rm high},\\
&\widehat{\bm{G}}_{i,j\pm\frac12}^{\rm PCP}=
\big(1-\theta^y_{i,j\pm\frac12}\big)\widehat{\bm{G}}_{i,j\pm\frac12}^{\rm low}
+\theta^y_{i,j\pm\frac12}\widehat{\bm{G}}_{i,j\pm\frac12}^{\rm high},\end{align}
with
$$\begin{aligned}
&\widehat{\bm{F}}_{i+\frac12,j}^{\rm low}=\frac{1}{2}\Big(\bm{F}(\overline{\bm{U}}_{ij}^n)
+\bm{F}(\overline{\bm{U}}_{i+1,j}^n)-\alpha^n_{i+\frac12,j}(\overline{\bm{U}}_{i+1,j}^n-\overline{\bm{U}}_{ij}^n)\Big),\\
&
\widehat{\bm{G}}_{i,j+\frac12}^{\rm low}=\frac{1}{2}\Big(\bm{G}(\overline{\bm{U}}_{ij}^n)
+\bm{G}(\overline{\bm{U}}_{i,j+1}^n)-\beta^n_{i,j+\frac12}(\overline{\bm{U}}_{i,j+1}^n-\overline{\bm{U}}_{ij}^n)\Big),
\end{aligned}$$
and $\alpha^n_{i+\frac12,j}=\max\{\varrho^x(\overline{\bm{U}}_{i+1,j}^n),
\varrho^x(\overline{\bm{U}}_{i,j}^n)\}$, 
$\beta^n_{i,j+\frac12}=\max\{\varrho^y(\overline{\bm{U}}_{i,j+1}^n),
\varrho^y(\overline{\bm{U}}_{i,j}^n)\}$.
Here
$ \varrho^x$ and $\varrho^y$ are the spectral radii of the Jacobian matrices
$\partial \bm{F}/\partial \bm{U}$ and $\partial \bm{G}/\partial \bm{U}$, respectively,
$\theta^x_{i\pm\frac12,j}$ and $\theta^y_{i,j\pm\frac12}$ are 
the PCP flux limiters defined below for $\widehat{\bm{F}}$ and $\widehat{\bm{G}}$, respectively.

If  assuming $\overline{\bm{U}}_{ij}^n\in\mathcal{G}$ for any $i,j$,
and
the CFL-type condition
\begin{equation}\label{addtime}
\Delta t^n\le\frac14\min\limits_{i,j}\Big(\frac{\Delta x_i}{\alpha^n_{i+\frac12,j}},\frac{\Delta y_j}{\beta^n_{i,j+\frac12}}\Big),
\end{equation}
then  using the  properties  of 
$\mathcal{G}$ in Lemma \ref{lem2} %\cite{wu2015}
 yields
$$\bm{U}_{ij}^{\pm,\rm low}:=\overline{\bm{U}}_{ij}^n\mp\frac{4\Delta t^n}{\Delta x_i}\widehat{\bm{F}}_{i\pm\frac12,j}^{\rm low}\in\mathcal{G},
~~~~\widetilde{\bm{U}}_{ij}^{\pm,\rm low}:=\overline{\bm{U}}_{ij}^n\mp\frac{4\Delta t^n}{\Delta y_j}\widehat{\bm{G}}_{i,j\pm\frac12}^{\rm low}\in\mathcal{G}.$$
%and
%$$\frac{1}{4}\big(\bm{U}_{ij}^{+,\rm low}+\bm{U}_{ij}^{-,\rm low}+\widetilde{\bm{U}}_{ij}^{+,\rm low}+\widetilde{\bm{U}}_{ij}^{-,\rm low}\big)
%\in\mathcal{G}.$$

Define
$$\bm{U}_{ij}^{\pm,\rm high}:=\overline{\bm{U}}_{ij}^n\mp\frac{4\Delta t^n}{\Delta x_i}\widehat{\bm{F}}_{i\pm\frac12,j}^{\rm high},
~~~~\widetilde{\bm{U}}_{ij}^{\pm,\rm high}:=\overline{\bm{U}}_{ij}^n\mp\frac{4\Delta t^n}{\Delta y_j}\widehat{\bm{G}}_{i,j\pm\frac12}^{\rm high},$$
and introduce two small positive numbers $\varepsilon_D$ and $\varepsilon_q$ such that
$D\big(\bm{U}_{ij}^{\pm,\rm low}\big)\ge\varepsilon_D>0$, $D\big(\widetilde{\bm{U}}_{ij}^{\pm,\rm low}\big)\ge\varepsilon_D>0$,
$q\big(\bm{U}_{ij}^{\pm,\rm low}\big)\ge\varepsilon_q>0$, $q\big(\widetilde{\bm{U}}_{ij}^{\pm,\rm low}\big)\ge\varepsilon_q>0$. 
In our coming computations, $\varepsilon_D=\varepsilon_q=10^{-14}$.

The PCP flux limiters $\theta^x_{i\pm\frac12,j}$ and $\theta^y_{i,j\pm\frac12}$ are defined as follows.

{\tt (i)} Enforce the positivity of the mass density $D(\bm{U})$. 
For each $i$ and $j$, define
$$\begin{aligned}
&\theta_{i+\frac12,j}^{D,x,\pm}=\left\{\begin{array}{ll}
(D_{i+\frac12\mp\frac12,j}^{\pm,\rm low}-\varepsilon_D)/(D_{i+\frac12\mp\frac12,j}^{\pm,\rm low}-D_{i+\frac12\mp\frac12,j}^{\pm,\rm high}),
&~\text{if}~D_{i+\frac12\mp\frac12,j}^{\pm,\rm high}<\varepsilon_D,\\
1,&~\text{otherwise},\end{array}\right.\\
&\theta_{i,j+\frac12}^{D,y,\pm}=\left\{\begin{array}{ll}
(\widetilde{D}_{i,j+\frac12\mp\frac12}^{\pm,\rm low}-\varepsilon_D)/(\widetilde{D}_{i,j+\frac12\mp\frac12}^{\pm,\rm low}-\widetilde{D}_{i,j+\frac12\mp\frac12}^{\pm,\rm high}),
&~\text{if}~\widetilde{D}_{i,j+\frac12\mp\frac12}^{\pm,\rm high}<\varepsilon_D,\\
1,&~\text{otherwise},\end{array}\right.
\end{aligned}$$
and limit
\begin{align}
&\left\{\widehat{\bm{F}}_{i+\frac12,j}^{D}\right\}_k=\left\{\begin{array}{ll}
(1-\theta_{i+\frac12,j}^{D,x})\left\{\widehat{\bm{F}}_{i+\frac12,j}^{\text{low}}\right\}_k
+\theta_{i+\frac12,j}^{D,x}\left\{\widehat{\bm{F}}_{i+\frac12,j}^{\text{high}}\right\}_k, &k=1,\\
\left\{\widehat{\bm{F}}_{i+\frac12,j}^{\text{high}}\right\}_k, & k>1,
\end{array}\right.\label{limiter-f1}\\
&\left\{\widehat{\bm{G}}_{i,j+\frac12}^{\rm D}\right\}_k=\left\{\begin{array}{ll}
	(1-\theta_{i,j+\frac12}^{D,y})\left\{\widehat{\bm{G}}_{i,j+\frac12}^{\text{low}}\right\}_k
	+\theta_{i,j+\frac12}^{D,y}\left\{\widehat{\bm{G}}_{i,j+\frac12}^{\text{high}}\right\}_k, &k=1,\\
	\left\{\widehat{\bm{G}}_{i,j+\frac12}^{\text{high}}\right\}_k, & k>1,
\end{array}\right.\label{limiter-g1}
\end{align}
where $\theta_{i+\frac12,j}^{D,x}=\min\{\theta_{i+\frac12,j}^{D,x,+},\theta_{i+\frac12,j}^{D,x,-}\},
\theta_{i,j+\frac12}^{D,y}=\min\{\theta_{i,j+\frac12}^{D,y,+},\theta_{i,j+\frac12}^{D,y,-}\}$, and $\left\{\widehat{\bm{F}}_{i+\frac12,j}\right\}_k,\left\{\widehat{\bm{G}}_{i,j+\frac12}\right\}_k$
are the $k$-th components of $\widehat{\bm{F}}_{i+\frac12,j}$ and $\widehat{\bm{G}}_{i,j+\frac12}$ respectively.

{\tt (ii)} Enforce the positivity of the term $q(\bm{U})=E-\sqrt{D^2+|\bm{m}|^2}$.
For each $i,j$, compute
$$\begin{aligned}
	&\theta_{i+\frac12,j}^{q,x,\pm}=\left\{\begin{array}{ll}
		\big(q(\bm{U}_{i+\frac12\mp\frac12,j}^{\pm,\rm low})-\varepsilon_q\big)/\big(q(\bm{U}_{i+\frac12\mp\frac12,j}^{\pm,\rm low})-q(\bm{U}_{i+\frac12\mp\frac12,j}^{\pm,\rm D})\big),
		&~\text{if}~q(\bm{U}_{i+\frac12\mp\frac12,j}^{\pm,\rm D})<\varepsilon_q,\\
		1,&~\text{otherwise},\end{array}\right.\\
	&\theta_{i,j+\frac12}^{q,y,\pm}=\left\{\begin{array}{ll}
		\big(q(\widetilde{\bm{U}}_{i,j+\frac12\mp\frac12}^{\pm,\rm low})-\varepsilon_q\big)/\big(q(\widetilde{\bm{U}}_{i,j+\frac12\mp\frac12}^{\pm,\rm low})-q(\widetilde{\bm{U}}_{i,j+\frac12\mp\frac12}^{\pm,\rm D})\big),
		&~\text{if}~q(\widetilde{\bm{U}}_{i,j+\frac12\mp\frac12}^{\pm,\rm D})<\varepsilon_q,\\
		1,&~\text{otherwise},\end{array}\right.
\end{aligned}$$
and then limit the numerical fluxes as
\begin{align}
&\widehat{\bm{F}}_{i+\frac12,j}^{\text{PCP}}=
(1-{\theta_{i+\frac12,j}^{q,x}})\widehat{\bm{F}}_{i+\frac12,j}^{\text{low}}
+{\theta_{i+\frac12,j}^{q,x}}\widehat{\bm{F}}_{i+\frac12,j}^{\text{D}},\label{limiter-f2}\\
&\widehat{\bm{G}}_{i,j+\frac12}^{\text{PCP}}=
(1-{\theta_{i,j+\frac12}^{q,y}})\widehat{\bm{G}}_{i,j+\frac12}^{\text{low}}
+{\theta_{i,j+\frac12}^{q,y}}\widehat{\bm{G}}_{i,j+\frac12}^{\text{D}},\label{limiter-g2}
\end{align}
where $\theta_{i+\frac12,j}^{q,x}=\min\{\theta_{i+\frac12,j}^{q,x,+},\theta_{i+\frac12,j}^{q,x,-}\}$ and
$\theta_{i,j+\frac12}^{q,y}=\min\{\theta_{i,j+\frac12}^{q,y,+},\theta_{i,j+\frac12}^{q,y,-}\}$.

{\tt (iii)} Define $\theta_{i\pm\frac12,j}^x:=\theta_{i\pm\frac12,j}^{D,x}\theta_{i\pm\frac12,j}^{q,x}$ and
$\theta_{i,j\pm\frac12}^y:=\theta_{i,j\pm\frac12}^{D,y}\theta_{i,j\pm\frac12}^{q,y}$.
 
It is not difficult to prove that the    scheme \eqref{PCPscheme} is consistent with the 2D RHD equations in \eqref{eq27} and also
is PCP,   
when $\overline{\bm{U}}_{ij}^{n}\in \mathcal{G}$ and a suitable time stepsize is given (see Theorem \ref{thm3}).
Furthermore, we also remark that such PCP  limiter does not destroy the original high order accuracy in the smooth region, more details
can be seen in \cite{wu2015}.

\begin{theorem}\label{thm3}
	Under the time step size restriction in \eqref{addtime}, if $\bm{U}_{ij}^{\pm,\rm low},\widetilde{\bm{U}}_{ij}^{\pm,\rm low}\in\mathcal{G}$
	for all $i,j$, and the wave speeds are estimated in \eqref{eq2}, then for the high-order finite volume
	scheme \eqref{eq53} we have 
	$$\overline{\bm{U}}_{ij}^{n+1}=\frac{1}{4}\left(\bm{U}_{ij}^{+,\rm PCP}+\bm{U}_{ij}^{-,\rm PCP}
	+\widetilde{\bm{U}}_{ij}^{+,\rm PCP}+\widetilde{\bm{U}}_{ij}^{-,\rm PCP}\right)\in\mathcal{G}, ~~~~\forall i,j,$$
where
$$\bm{U}_{ij}^{\pm,\rm PCP}=\overline{\bm{U}}_{ij}^{n}\mp\frac{4\Delta t^n}{\Delta x_i}\widehat{\bm{F}}^{\rm PCP}_{i\pm\frac{1}{2},j},
~~~\widetilde{\bm{U}}_{ij}^{\pm,\rm PCP}=\overline{\bm{U}}_{ij}^{n}\mp\frac{4\Delta t^n}{\Delta y_j}\widehat{\bm{G}}^{\rm PCP}_{i,j\pm\frac{1}{2}}.$$
\end{theorem}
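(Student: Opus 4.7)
The plan is to verify the stated identity first, then invoke the convexity of $\mathcal{G}$ together with the two-step construction of the flux limiter to show each of the four summands is admissible. By the definitions of $\bm{U}_{ij}^{\pm,\rm PCP}$ and $\widetilde{\bm{U}}_{ij}^{\pm,\rm PCP}$, adding the four states gives $4\overline{\bm{U}}_{ij}^n$ plus the $x$- and $y$-flux differences scaled exactly so that dividing by $4$ reproduces the update rule \eqref{PCPscheme}. Thus the decomposition identity is a direct algebraic check, and by Lemma \ref{lem1} it suffices to show $\bm{U}_{ij}^{\pm,\rm PCP},\widetilde{\bm{U}}_{ij}^{\pm,\rm PCP}\in\mathcal{G}$. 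By symmetry in $x$/$y$ and in $\pm$, I focus on $\bm{U}_{ij}^{+,\rm PCP}$.

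Next I unwind the two-stage limiter. From \eqref{limiter-f2}, $\widehat{\bm{F}}^{\rm PCP}_{i+\frac12,j} = (1-\theta^{q,x}_{i+\frac12,j})\widehat{\bm{F}}^{\rm low}_{i+\frac12,j} + \theta^{q,x}_{i+\frac12,j}\widehat{\bm{F}}^{D}_{i+\frac12,j}$, so the linearity of the update map in the flux gives
$$\bm{U}_{ij}^{+,\rm PCP} = (1-\theta^{q,x}_{i+\frac12,j})\bm{U}_{ij}^{+,\rm low} + \theta^{q,x}_{i+\frac12,j}\bm{U}_{ij}^{+,D},$$
which is a convex combination. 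Similarly, from \eqref{limiter-f1} the first component of $\widehat{\bm{F}}^{D}$ is a convex combination with weight $\theta^{D,x}_{i+\frac12,j}$ of the first components of $\widehat{\bm{F}}^{\rm low}$ and $\widehat{\bm{F}}^{\rm high}$, while the remaining components agree with those of $\widehat{\bm{F}}^{\rm high}$. The first component being $D$, this yields the scalar identity $D(\bm{U}_{ij}^{+,D}) = (1-\theta^{D,x}_{i+\frac12,j})D(\bm{U}_{ij}^{+,\rm low}) + \theta^{D,x}_{i+\frac12,j}D(\bm{U}_{ij}^{+,\rm high})$.

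Positivity of $D$ now follows from the defining formula for $\theta^{D,x,+}_{i+\frac12,j}$, which is precisely calibrated so that $D(\bm{U}_{ij}^{+,D})\ge\varepsilon_D$ even when $D(\bm{U}_{ij}^{+,\rm high})<\varepsilon_D$; combined with the hypothesis $D(\bm{U}_{ij}^{+,\rm low})\ge\varepsilon_D$ and the convex combination above, this gives $D(\bm{U}_{ij}^{+,\rm PCP})\ge\varepsilon_D>0$. Positivity of $q$ requires more care: since $\bm{U}^{+,\rm PCP}$ is a convex combination of $\bm{U}^{+,\rm low}$ and $\bm{U}^{+,D}$, and $q(\bm{U})=E-\sqrt{D^2+|\bm{m}|^2}$ is a \emph{concave} function on $\{D>0\}$ (because the Euclidean norm is convex; this was used in \cite{wu2015}), I obtain
$$q(\bm{U}_{ij}^{+,\rm PCP}) \ge (1-\theta^{q,x}_{i+\frac12,j})\,q(\bm{U}_{ij}^{+,\rm low}) + \theta^{q,x}_{i+\frac12,j}\,q(\bm{U}_{ij}^{+,D}).$$
The threshold definition of $\theta^{q,x,+}_{i+\frac12,j}$ forces the right-hand side to be at least $\varepsilon_q$, so $q(\bm{U}_{ij}^{+,\rm PCP})\ge\varepsilon_q>0$. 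Hence $\bm{U}_{ij}^{+,\rm PCP}\in\mathcal{G}$; the other three summands are handled identically, and convexity of $\mathcal{G}$ finishes the argument.

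The main obstacle is exactly the interplay between the two limiter stages: the $D$-stage perturbs only the mass component of the flux while the $q$-stage blends the full vector, so a naive componentwise argument fails for $q$. One must instead use concavity of $q$, which is legitimate only once one has already secured $D>0$ for both endpoints $\bm{U}^{+,\rm low}$ and $\bm{U}^{+,D}$ of the convex combination; this is why step (i) must precede step (ii) in the limiter design and why the hypothesis $\bm{U}_{ij}^{\pm,\rm low},\widetilde{\bm{U}}_{ij}^{\pm,\rm low}\in\mathcal{G}$ (guaranteed by the CFL-type bound \eqref{addtime} via Lemma \ref{lem2}) is indispensable. Once concavity is invoked, the remainder is bookkeeping.
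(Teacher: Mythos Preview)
Your proposal is correct and follows essentially the same route as the paper: reduce to the four summands via convexity of $\mathcal{G}$, then use the two-stage limiter structure together with concavity of $q$ to verify $D>0$ and $q>0$ for each. The only cosmetic difference is that the paper combines both limiter stages into a single weight $\theta^x=\theta^{D,x}\theta^{q,x}$ when treating $D$ and uses an explicit rewriting $\frac{\theta^x}{\theta^{D,x,\pm}}(\cdots)+(1-\frac{\theta^x}{\theta^{D,x,\pm}})D^{\pm,\rm low}$ to handle the fact that $\theta^{D,x}=\min\{\theta^{D,x,+},\theta^{D,x,-}\}$ may be strictly smaller than the calibrated threshold $\theta^{D,x,\pm}$; you instead first secure $D(\bm U^{+,D})\ge\varepsilon_D$ and then take a further convex combination, which is an equivalent (and arguably cleaner) organization of the same monotonicity argument.
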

\begin{proof}
Since $\overline{\bm{U}}_{ij}^{n+1}$ is a convex combination of four terms $\bm{U}_{ij}^{+,\rm PCP},\bm{U}_{ij}^{-,\rm PCP}
,\widetilde{\bm{U}}_{ij}^{+,\rm PCP},\widetilde{\bm{U}}_{ij}^{-,\rm PCP}$, a sufficient condition for $\overline{\bm{U}}_{ij}^{n+1}\in\mathcal{G}$
is that each term belongs to the admissible set $\mathcal{G}$ due to the convexity of $\mathcal{G}$, see Lemma \ref{lem1}. Without loss of generality, we here just provide
the proof for $\bm{U}_{ij}^{\pm,\rm PCP}\in\mathcal{G}$ and similar analysis can be applied on $\widetilde{\bm{U}}_{ij}^{\pm,\rm PCP}$.

With the assumption and the above PCP flux limiter, we know that $0\le\theta_{i+\frac12,j}^{D,x}\le1,0\le\theta_{i+\frac12,j}^{q,x}\le1$
and there exist two small positive numbers $\varepsilon_D$ and $\varepsilon_q$ such that $D_{ij}^{\pm,\rm low}\ge\varepsilon_D>0,
q\big(\bm{U}_{ij}^{\pm,\rm low}\big)\ge\varepsilon_q>0$.

Combining \eqref{limiter-f1} and \eqref{limiter-f2}  gets
$$\left\{\widehat{\bm{F}}_{i\pm\frac12,j}^{\rm PCP}\right\}_1=
\big(1-\theta^x_{i\pm\frac12,j}\big)\left\{\widehat{\bm{F}}_{i\pm\frac12,j}^{\rm low}\right\}_1
+\theta_{i\pm\frac12,j}^x\left\{\widehat{\bm{F}}_{i\pm\frac12,j}^{\rm high}\right\}_1,$$
and then
$$D_{ij}^{\pm,\rm PCP}=\big(1-\theta^x_{i\pm\frac12,j}\big)D_{ij}^{\pm,\rm low}+\theta^x_{i\pm\frac12,j}D_{ij}^{\pm,\rm high}.$$
 According to the definitions of 
$\theta^x_{i\pm\frac12,j}$ and $\theta_{i\pm\frac12,j}^{D,x,\pm}$, one has
$$0<\theta^x_{i\pm\frac12,j}\le\theta_{i\pm\frac12,j}^{D,x}\le\theta_{i\pm\frac12,j}^{D,x,\pm}\le1,$$
and
$$\big(1-\theta_{i\pm\frac12,j}^{D,x,\pm}\big)D_{ij}^{\pm,\rm low}+\theta_{i\pm\frac12,j}^{D,x,\pm}D_{ij}^{\pm,\rm high}\ge\varepsilon_D>0,$$
which implies
$$\begin{aligned}
D_{ij}^{\pm,\rm PCP}&=\big(1-\theta_{i\pm\frac12,j}^x\big)D_{ij}^{\pm,\rm low}+\theta_{i\pm\frac12,j}^xD_{ij}^{\pm,\rm high}\\
&=\frac{\theta^x_{i\pm\frac12,j}}{\theta_{i\pm\frac12,j}^{D,x,\pm}}\left((1-\theta_{i\pm\frac12,j}^{D,x,\pm})D_{ij}^{\pm,\rm low}+\theta_{i\pm\frac12,j}^{D,x,\pm}D_{ij}^{\pm,\rm high}\right)
+\left(1-\frac{\theta^x_{i\pm\frac12,j}}{\theta_{i\pm\frac12,j}^{D,x,\pm}}\right)D_{ij}^{\pm,\rm low}\\
&\ge\frac{\theta^x_{i\pm\frac12,j}}{\theta_{i\pm\frac12,j}^{D,x,\pm}}\varepsilon_D
+\left(1-\frac{\theta^x_{i\pm\frac12,j}}{\theta_{i\pm\frac12,j}^{D,x,\pm}}\right)\varepsilon_D=\varepsilon_D>0.
\end{aligned}$$
On the other hand, \eqref{limiter-f2} gives
$$\bm{U}_{ij}^{\pm,\rm PCP}=(1-\theta_{i\pm\frac12,j}^{q,x})\bm{U}_{ij}^{\pm,\rm low}+\theta_{i\pm\frac12,j}^{q,x}\bm{U}_{ij}^{\pm,\rm D},$$
and then
$$q\left(\bm{U}_{ij}^{\pm,\rm PCP}\right)=q\left((1-\theta_{i\pm\frac12,j}^{q,x})\bm{U}_{ij}^{\pm,\rm low}
+\theta_{i\pm\frac12,j}^{q,x}\bm{U}_{ij}^{\pm,\rm D}\right)
\ge (1-\theta_{i\pm\frac12,j}^{q,x})q\left(\bm{U}_{ij}^{\pm,\rm low}\right)+\theta_{i\pm\frac12,j}^{q,x}q\left(\bm{U}_{ij}^{\pm,\rm D}\right),$$
since the function $q(\bm{U})$ is concave. Hence we obtain
$$\begin{aligned}
q\left(\bm{U}_{ij}^{\pm,\rm PCP}\right)&\ge (1-\theta_{i\pm\frac12,j}^{q,x})q\left(\bm{U}_{ij}^{\pm,\rm low}\right)+\theta_{i\pm\frac12,j}^{q,x}q\left(\bm{U}_{ij}^{\pm,\rm D}\right)\\
&=\frac{\theta_{i\pm\frac12,j}^{q,x}}{\theta_{i\pm\frac12,j}^{q,x,\pm}}\left((1-\theta_{i\pm\frac12,j}^{q,x,\pm})q\left(\bm{U}_{ij}^{\pm,\rm low}\right)+\theta_{i\pm\frac12,j}^{q,x,\pm}q\left(\bm{U}_{ij}^{\pm,\rm D}\right)\right)
+\left(1-\frac{\theta_{i\pm\frac12,j}^x}{\theta_{i\pm\frac12,j}^{q,x,\pm}}\right)q\left(\bm{U}_{ij}^{\pm,\rm low}\right)\\
&\ge\frac{\theta_{i\pm\frac12,j}^{q,x}}{\theta_{i\pm\frac12,j}^{q,x,\pm}}\varepsilon_q
+\left(1-\frac{\theta_{i\pm\frac12,j}^{q,x}}{\theta_{i\pm\frac12,j}^{q,x,\pm}}\right)\varepsilon_q=\varepsilon_q>0.
\end{aligned}$$
Thus, $\bm{U}_{ij}^{\pm,\rm PCP}\in\mathcal{G}$. The proof is completed.
\end{proof}

\begin{remark}
%For the high-order scheme \eqref{eq53}, assuming $\overline{\bm{U}}_{ij}^n\in\mathcal{G}$, to 
In practice, the computation of the numerical fluxes $\widehat{\bm{F}}_{i+\frac12,j}^{\rm high}$ and $\widehat{\bm{G}}_{i,j+\frac12}^{\rm high}$
needs   that the reconstructed values $\bm{U}_{ij}^n(x_i^\alpha,y_{j+\frac{1}{2}})$
and $\bm{U}_{ij}^n(x_{i+\frac{1}{2}},y_j^\beta)$ are in the admissible set $\mathcal{G}$. 
It may be enforced by using the   PCP limiter %\cite{zhang,wu2017,ling},
similar to the above  by replacing $\bm{U}_{ij}^{\pm,\rm high}$ 
and $\widetilde{\bm{U}}_{ij}^{\pm,\rm high}$, see \cite{wu2017,ling}.
\end{remark}
\begin{remark}
In order to get a scheme of high order accuracy both in space and time,
we replace the forward Euler time discretization in the PCP scheme \eqref{PCPscheme} with an explicit third-order accurate SSP Runge-Kutta time discretization,
which is still PCP under a suitable CFL-type condition and is implemented as follows:
$$\begin{aligned}
	&\overline{\bm{U}}_{ij}^{(1)}
	=\overline{\bm{U}}_{ij}^{n}-\Delta t^n\mathcal{L}(\overline{\bm{U}}^n_{ij}),\\
	&\overline{\bm{U}}_{ij}^{(2)}
	=\frac{3}{4}\overline{\bm{U}}_{ij}^{n}+\frac{1}{4}\left(\overline{\bm{U}}_{ij}^{(1)}
	-\Delta t^n\mathcal{L}(\overline{\bm{U}}^{(1)}_{ij})\right),\\
	&\overline{\bm{U}}_{ij}^{n+1}
	=\frac{1}{3}\overline{\bm{U}}_{ij}^{n}+\frac{2}{3}\left(\overline{\bm{U}}_{ij}^{(2)}
	-\Delta t^n\mathcal{L}(\overline{\bm{U}}^{(2)}_{ij})\right),
\end{aligned}$$
where
$$\mathcal{L}(\overline{\bm{U}}_{ij})=\frac{1}{\Delta x_i}\big(\widehat{\bm{F}}^{\text{\rm PCP}}_{i+\frac{1}{2},j}-\widehat{\bm{F}}^{\text{\rm PCP}}_{i-\frac{1}{2},j}\big)+
\frac{1}{\Delta y_j}\big(\widehat{\bm{G}}^{\text{\rm PCP}}_{i,j+\frac{1}{2}}-\widehat{\bm{G}}^{\text{\rm PCP}}_{i,j-\frac{1}{2}}\big).$$
\end{remark}

\section{Numerical tests}\label{num}
This section conducts several numerical experiments on the 2D ultra-relativistic RHD problems with large Lorentz factor, or strong discontinuities, or low rest-mass density or pressure, to verify the
accuracy, robustness, and effectiveness of the present PCP schemes.
It is worth remarking that those ultra-relativistic RHD problems seriously challenge the numerical schemes. Unless otherwise stated, all the computations are restricted to the EOS
\eqref{eq15} with the adiabatic index $\Gamma=5/3$, and
the time step size $\Delta t^n$ determined by \eqref{eq39} with the CFL number $\sigma=0.45$. Moreover, in our computations, we apply the fifth-order WENO reconstruction \cite{shu} and replace $\Delta t^n$ with $(\Delta t^n)^{5/3}$ to match the spacial accuracy in Examples \ref{exam1} and \ref{exam2}.

\begin{example}\label{exam3} \rm
	We first construct a explosion problem to test the multi-dimensionality of our scheme by referring to that in \cite{barsara2}.
	Initially, the rest fluid with a unit rest-mass density is  in the domain  $\Omega=[-0.5,0.5]^2$.
	The pressure is set as 20 inside a circle of radius 1/10, while
	a smaller pressure of 0.1 is given all over outside the circle.
	Figure \ref{fig8}  plots the contours and cross sections along $y$-axis and $y=x$
	of the rest-mass density at $t=0.1$ obtained by using
	our first-order PCP scheme with the multidimensional Riemann solver, i.e.
	\eqref {eq20} with \eqref{add23}-\eqref{add24},
	on the mesh of $64\times64$ uniform cells.
	For a comparison, -eps-converted-to.pdf \ref{fig8b} gives the numerical solutions
	obtained by using
	corresponding scheme with the 1D Riemann solver, i.e. \eqref {eq20} with 
	$\widehat{\bm{F}}_{i+\frac{1}{2},j}=\bm{F}_{i+\frac{1}{2},j}^{\ast\ast}$
	and   $\widehat{\bm{G}}_{i,j+\frac{1}{2}}=\bm{G}_{i,j+\frac{1}{2}}^{\ast\ast}$.
	It can be clearly seen from them that the results obtained by
	our scheme with the 2D Riemann solver preserve the spherical symmetry better.
\end{example}

\begin{figure}[H]
\begin{minipage}{0.5\textwidth}
\centering
\includegraphics[width=3.0in]{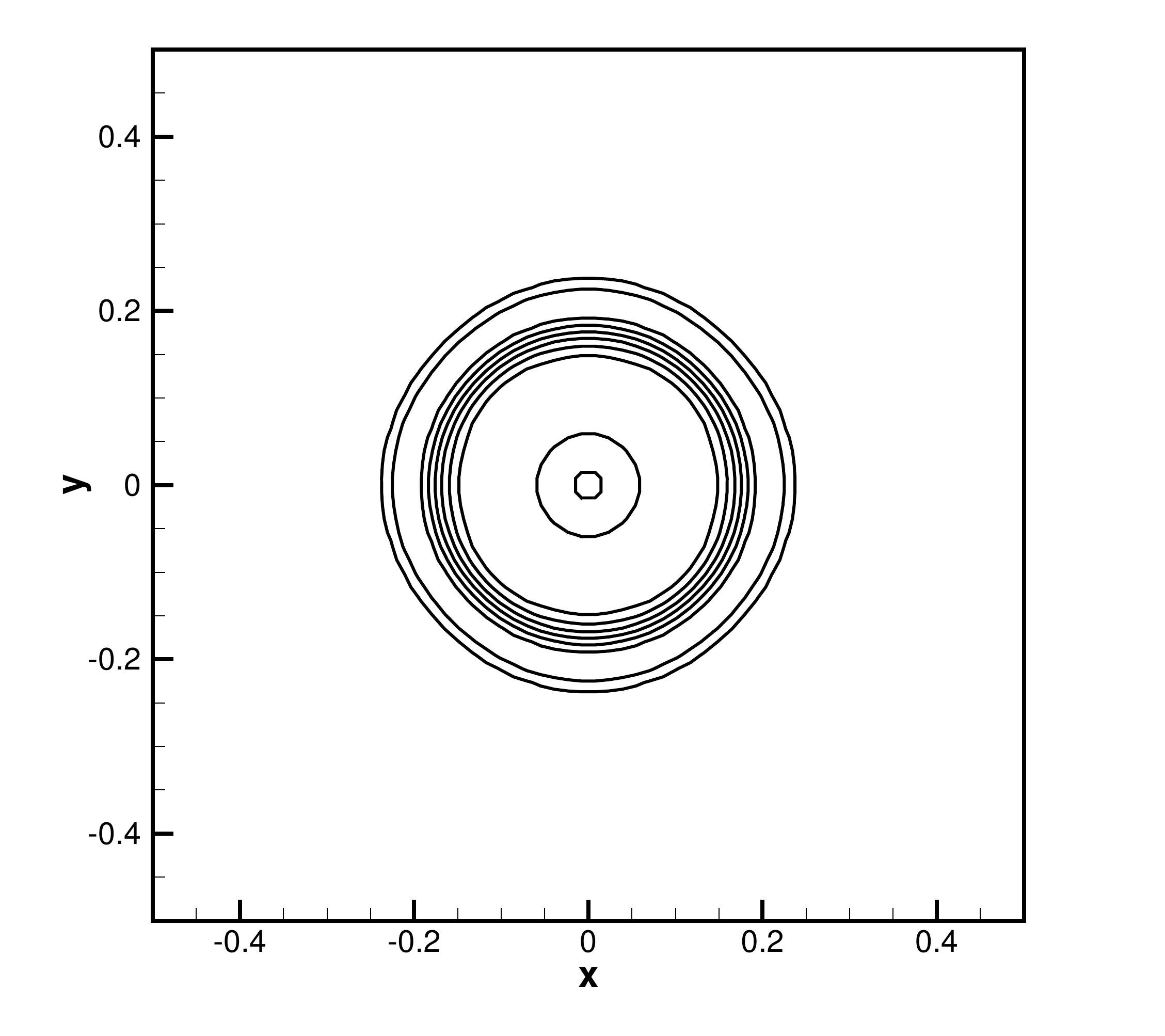}
\end{minipage}
\begin{minipage}{0.5\textwidth}
\centering
\includegraphics[width=3.2in]{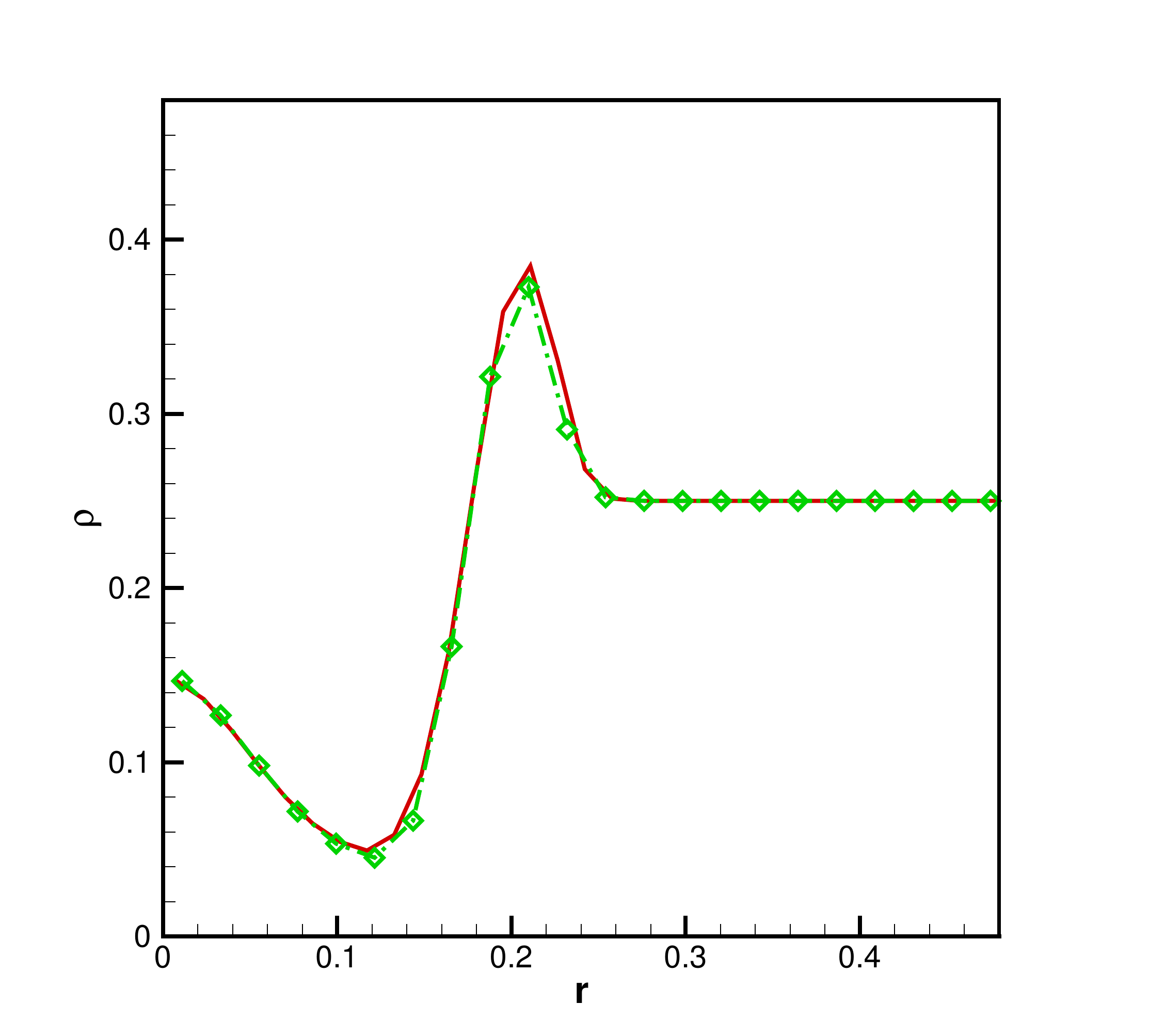}
\end{minipage}
\caption{\small Example \ref{exam3}: The rest-mass density at $t = 0.1$
		obtained from our first-order PCP scheme with the 2D
		HLL Riemann solver.
		Left: The contours with eight equally spaced contour lines;
		right: the cross sections of $\rho$ at $y$-axis (solid line) and $y=x$ (dashed line with the symbol ``$\diamond$'').}
\label{fig8}
\end{figure}

\begin{figure}[H]
\begin{minipage}{0.5\textwidth}
\centering
\includegraphics[width=3.0in]{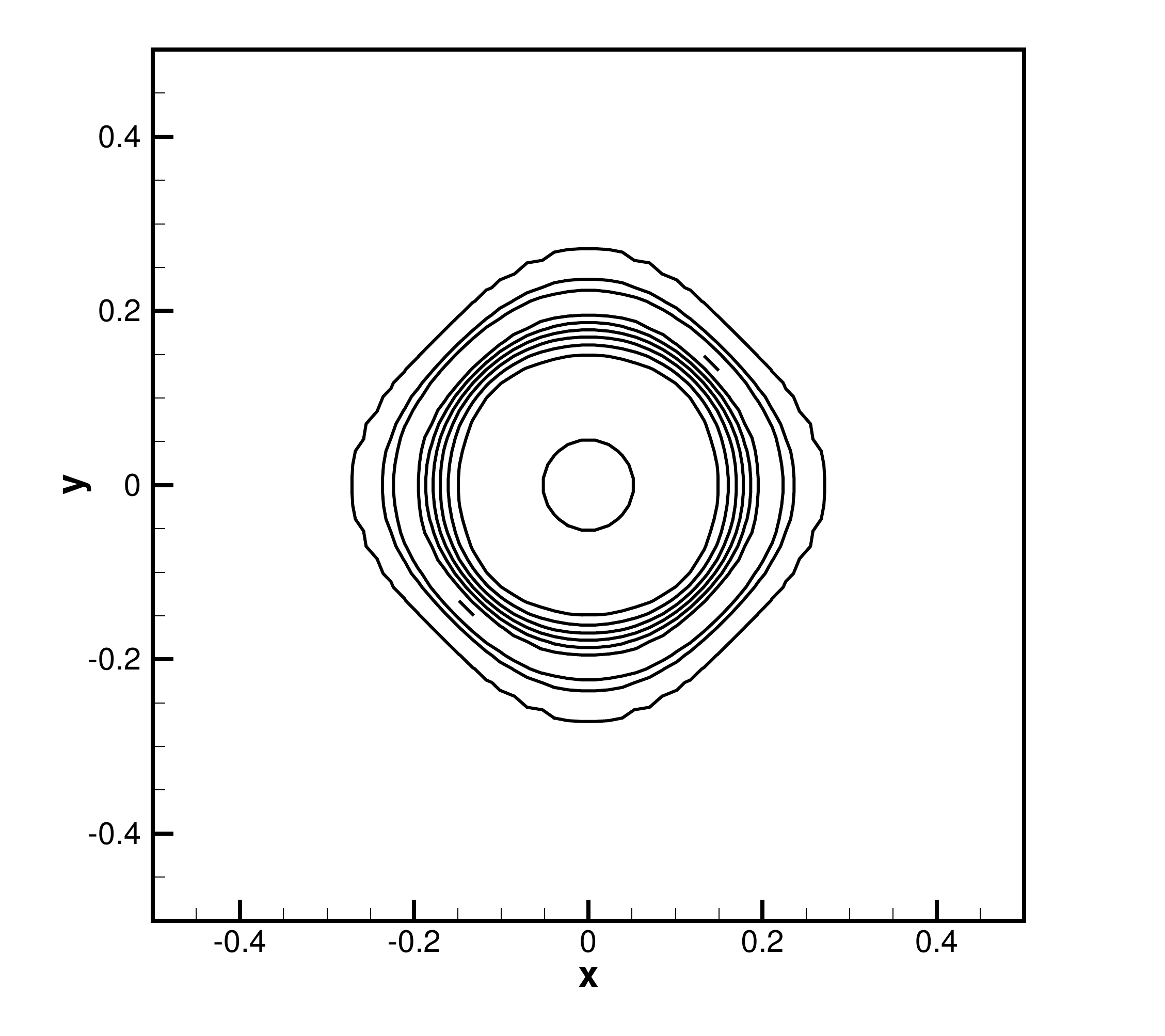}
\end{minipage}
\begin{minipage}{0.5\textwidth}
\centering
\includegraphics[width=3.2in]{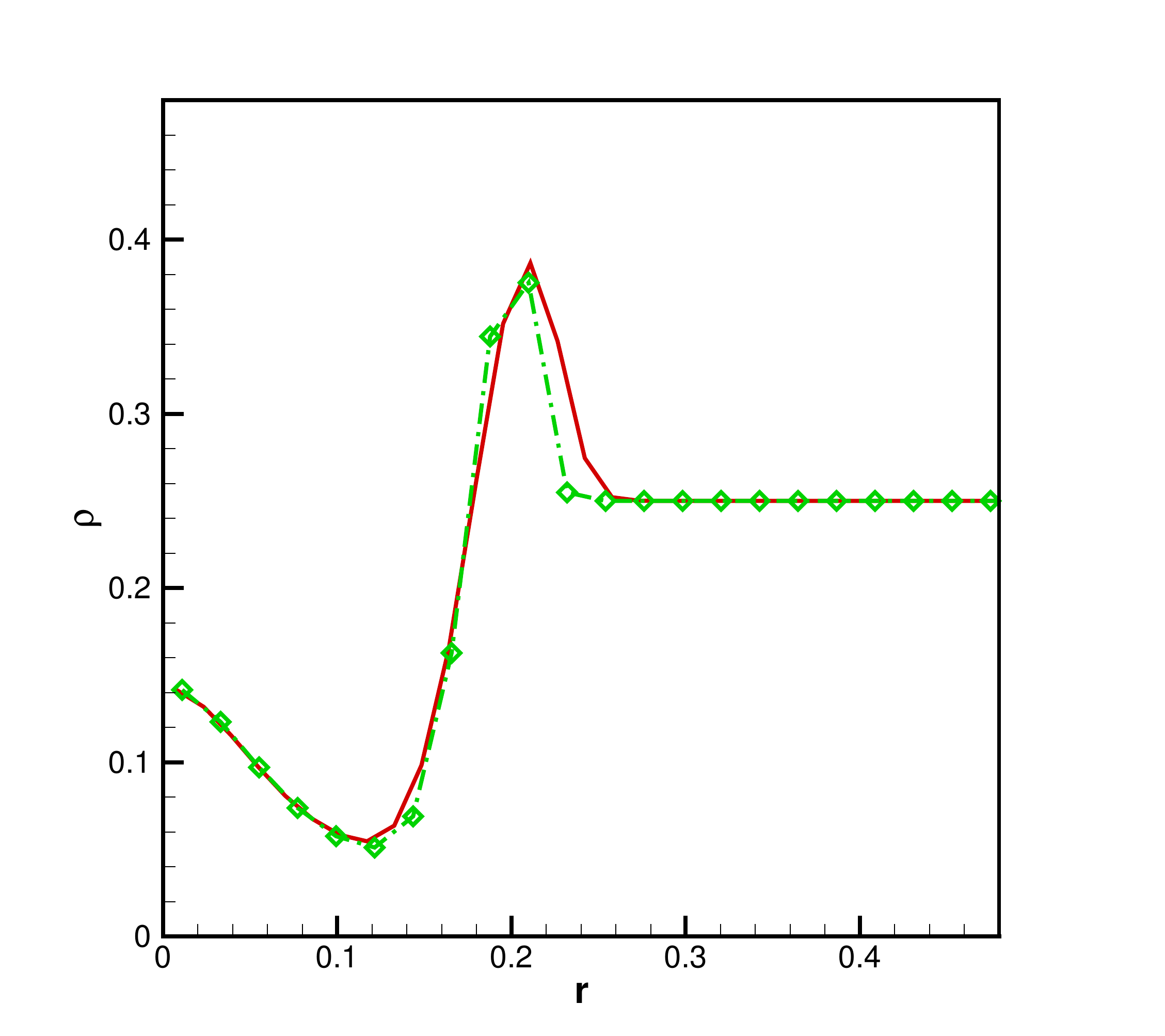}
\end{minipage}
	\caption{\small Same as Figure \ref{fig8}, except for the 1D
		HLL Riemann solver.}
	\label{fig8b}
\end{figure}

\begin{example}[Sine wave propagation \cite{wu2017}]\label{exam1}\rm
	This problem is used to test the accuracy of our PCP finite volume schemes. Its
	exact solution is
	$$(\rho,u,v,p)(x,y,t)=(1+0.99999\sin(2\pi(x+y-0.99\sqrt{2}t)),
	0.99/\sqrt{2},0.99/\sqrt{2},0.01),\ t\geq 0$$
	which describes an RHD sine wave propagating periodically in the domain $\Omega=[0,1]^2$ at
	an angle $45^\circ$ with the $x$-axis.
	The computational domain is divided into $N\times N$ uniform cells, and the periodic boundary
	conditions are specified on the boundary of $\Omega$.  Tables \ref{table1} and   \ref{table1-add} list the $\ell^1,\ell^2$ and
	$\ell^\infty$ errors at $t=0.1$ and orders of convergence obtained from our first-order and fifth-order multidimensional
	PCP schemes respectively. The results show the expected PCP performance.
	Table \ref{table1-add} also lists the proportions of the PCP limited cells at all time levels, denoted by $\Theta_N^1$ (scaling PCP limiter) and
	$\Theta_N^2$ (PCP flux limiter). It can be observed
	that the PCP limiter has been performed in the fifth-order accurate scheme because of the low density, and the usage of the limiter
	does not destroy the higher-order accuracy.
\end{example}
\begin{table}[H]
	\centering
\caption{\small Example \ref{exam1}: Errors and orders of convergence for the mass density at $t=0.1$
		obtained by using the first-order PCP scheme with the mesh of $N\times N$ uniform cells.}
	\label{table1}
	\begin{tabular}{c||c|c||c|c||c|c}
		\hline
		$N$ & $\ell^1$ error & $\ell^1$ order & $\ell^2$ error & $\ell^2$ order & $\ell^{\infty}$ error & $\ell^{\infty}$ order   \\
		\hline
 20 &  3.91E-01 &  --- &  4.36E-01 &  --- &  6.16E-01 &  ---\\
40 &  1.92E-01 &  1.03 &  2.13E-01 &  1.03 &  3.01E-01 &  1.03\\
80 &  9.49E-02 &  1.02 &  1.05E-01 &  1.02 &  1.49E-01 &  1.01\\
160 &  4.76E-02 &  1.00 &  5.28E-02 &  1.00 &  7.47E-02 &  1.00\\
320 &  2.38E-02 &  1.00 &  2.65E-02 &  1.00 &  3.74E-02 &  1.00\\
		\hline
	\end{tabular}
\end{table}

\begin{table}[H]
	\centering
\caption{\small Example \ref{exam1}: Same as Table \ref{table1}, except for   the fifth-order PCP scheme.}
	\label{table1-add}
	\begin{tabular}{c||c|c||c|c||c|c|c|c}
		\hline
		$N$ & $\ell^1$ error & $\ell^1$ order & $\ell^2$ error & $\ell^2$ order & $\ell^{\infty}$ error & $\ell^{\infty}$ order 
		& $\Theta_N^1$ (\%) & $\Theta_N^2$ (\%)  \\
		\hline
10   &    3.70E-02   &  --   &    4.08E-02   &  --   &    6.18E-02   &  -- & 36.18 &  0.00\\
20   &    1.37E-03   &  4.75   &    1.59E-03   &  4.68   &    3.07E-03   &  4.33 & 12.36 &  0.00\\
40   &    3.96E-05   &  5.12   &    4.64E-05   &  5.10   &    9.12E-05   &  5.07  & 1.85 &  0.00\\
80   &    1.19E-06   &  5.05   &    1.38E-06   &  5.07   &    2.87E-06   &  4.99 & 0.00  &  0.00\\
160  &    3.64E-08   &  5.03   &    4.15E-08   &  5.06   &    8.56E-08   &  5.07 & 0.00 &  0.00\\ 
\hline
	\end{tabular}
\end{table}

\begin{example}[Relativistic isentropic vortex]\label{exam2} \rm
	It is a 2D relativistic isentropic vortex problem constructed first in \cite{ling}, where
	the vortex in the space-time coordinate system $(x,y,t)$ moves with a constant speed of
	magnitude $w$ in $(-1,-1)$ direction.
	The time-dependent solution $(\rho,u,v,p)$ at time $t\geq 0$
	is given as follows
	\begin{align*}
		&\rho=(1-\alpha e^{1-r^2})^{\frac{1}{\Gamma-1}},\quad p=\rho^\Gamma,\\
		&u=\frac{1}{1-\frac{w(u_0+v_0)}{\sqrt{2}}}\left[\frac{u_0}{\gamma}-\frac{w}{\sqrt{2}}+\frac{\gamma w^2}{2(\gamma+1)}(u_0+v_0)\right],\\
		&v=\frac{1}{1-\frac{w(u_0+u_0)}{\sqrt{2}}}\left[\frac{v_0}{\gamma}-\frac{w}{\sqrt{2}}+\frac{\gamma w^2}{2(\gamma+1)}(u_0+v_0)\right],
	\end{align*}
	where
	\begin{align*}
		&\gamma=\frac{1}{\sqrt{1-w^2}},\quad r=\sqrt{x_0^2+y_0^2},\quad (u_0,v_0)=(-y_0,x_0)f,\\
		&\alpha=\frac{(\Gamma-1)}{8\Gamma\pi^2}\epsilon^2,\quad\beta=\dfrac{2\Gamma\alpha e^{1-r^2}}{2\Gamma-1-\Gamma\alpha e^{1-r^2}},
		\quad f=\sqrt{\frac{\beta}{1+\beta r^2}},\\
		&x_0=x+\frac{\gamma-1}{2}(x+y)+\frac{\gamma tw}{\sqrt{2}},~~y_0=y+\frac{\gamma-1}{2}(x+y)+\frac{\gamma tw}{\sqrt{2}}.
	\end{align*}
	Our computations are performed in the domain
	$\Omega=[-6,6]^2$ with the adiabatic index $\Gamma=1.4$, $w=0.5\sqrt{2}$,
	the vortex strength $\epsilon=10.0828$, and the periodic boundary conditions. In this case, the
	lowest density and lowest pressure are $7.83\times 10^{-15}$ and
	$1.78\times 10^{-20}$, respectively.
	
	Tables \ref{table2} and \ref{addtable2} give the errors of the rest-mass density at $t=1$ and the orders of convergence obtained
	from our first- and fifth-order PCP schemes respectively. It is clear to see that our multidimensional PCP schemes achieves the expected accuracy and preserves the positivity of the density and pressure simultaneously. Also the proportions of the PCP limited cells at all time levels, denoted by {$\Theta_N^1$ (scaling PCP limiter) and
		$\Theta_N^2$ (PCP flux limiter)}, are listed to show that the PCP limiter is indeed used to preserve the admissibility of numerical solutions.
\end{example}
\begin{table}[H]
	\centering
	\caption{\small Example \ref{exam2}: Errors and orders of convergence for mass
		density at $t=1$ obtained by using the first-order PCP scheme with the mesh of $N\times N$ uniform cells.}
	\label{table2}
	\begin{tabular}{c||c|c||c|c||c|c}
		\hline
		$N$ & $\ell^1$ error & $\ell^1$ order & $\ell^2$ error & $\ell^2$ order & $\ell^{\infty}$ error &
		$\ell^{\infty}$ order   \\
		\hline
20   &    2.48E+00   &  ---   &    7.41E-01   &  ---   &    5.54E-01   &  ---\\
40   &    1.63E+00   &  0.60   &    4.90E-01   &  0.60   &    3.61E-01   &  0.62\\
80   &    9.42E-01   &  0.80   &    2.91E-01   &  0.75   &    2.19E-01   &  0.72\\
160  &    5.12E-01   &  0.88   &    1.63E-01   &  0.84   &    1.30E-01   &  0.76\\
320  &    2.68E-01   &  0.93   &    8.66E-02   &  0.91   &    7.14E-02   &  0.86\\
		\hline
	\end{tabular}
\end{table}

\begin{table}[H]
	\centering
	\caption{\small Example \ref{exam2}: Same as Table \ref{table2}, except for  the fifth-order PCP scheme.}
	\label{addtable2}
	\begin{tabular}{c||c|c||c|c||c|c|c|c}
		\hline
		$N$ & $\ell^1$ error & $\ell^1$ order & $\ell^2$ error & $\ell^2$ order & $\ell^{\infty}$ error &
		$\ell^{\infty}$ order  & $\Theta_N^1$ (\%) & $\Theta_N^2$ (\%)\\
		\hline
20   &    9.12E-01   &  ---   &    2.88E-01   &  ---   &    2.37E-01   &  --- & 3.21  & 3.82\\
40   &    1.63E-01   &  2.49   &    7.60E-02   &  1.92   &    9.66E-02   &  1.30  & 1.53  & 1.85\\
80   &    8.66E-03   &  4.23   &    4.56E-03   &  4.06   &    1.19E-02   &  3.03  & 5.13E-01   & 7.13E-02\\
160  &    3.22E-04   &  4.75   &    1.64E-04   &  4.80   &    4.36E-04   &  4.77  & 2.34E-02 & 2.57E-04\\
320  &    1.12E-05   &  4.84   &    6.12E-06   &  4.74   &    1.84E-05   &  4.56  & 7.74E-04 & 2.77E-05\\
640  &    3.58E-07   &  4.97   &    1.95E-07   &  4.97   &    7.86E-07   &  4.55  & 9.40E-05 & 2.04E-06\\
		\hline
	\end{tabular}
\end{table}

\begin{example}[Riemann problem I]\label{exam4} \rm
	This example solves the 2D Riemann problem \cite{wu2015}.
	The initial data are given  by
	\begin{equation*}
		(\rho,u,v,p)(x,y,0)=\begin{cases}
			(0.1,0,0,0.01),& x>0.5,~y>0.5,\\
			(0.1,0.99,0,1),& x<0.5,~y>0.5,\\
			(0.5,0,0,1),& x<0.5,~y<0.5,\\
			(0.1,0,0.99,1),& x>0.5,~y<0.5,
		\end{cases}
	\end{equation*}
	where both the left and bottom discontinuities are contact discontinuities with a jump in the transverse
	velocity, while both the right and top discontinuities are not simple waves.
	
	The computational domain $\Omega$ is taken as $[0,1]^2$ and is divided into a uniform mesh with $400\times400$ cells.
	-eps-converted-to.pdf \ref{fig2} and \ref{fig2add} display the contours of the rest-mass density logarithm $\ln\rho$ and the pressure
	logarithm $\ln p$ at $t=0.4$ obtained by using the first- and the fifth-order PCP schemes respectively. We can see that
	the four initial discontinuities interact each other and form
	two reflected curved shock waves, an elongated jet-like spike.
	It is worth mentioning that a non-PCP scheme fails when simulating this problem.
	 Figure \ref{fig2add1}   also presents the cross sections of the numerical approximations along the line $y=x$ with $400\times400$ uniform mesh for the fifth-order PCP
	scheme and the same mesh, finer meshes of $800\times800$ and $1200\times1200$ for the first-order PCP scheme. It is obvious that the fifth-order
	scheme can capture the discontinuities better than the first-order scheme. Moreover, we count the PCP limited cells at each time level and the proportions are plotted in the Figure \ref{limiter-RP1}, from which one can clearly conclude that for the method without PCP property
	the simulation of this problem may fail.
\end{example}
\vspace{-2ex}
\begin{figure}[H]
	\centering
	\includegraphics[width=2.6in]{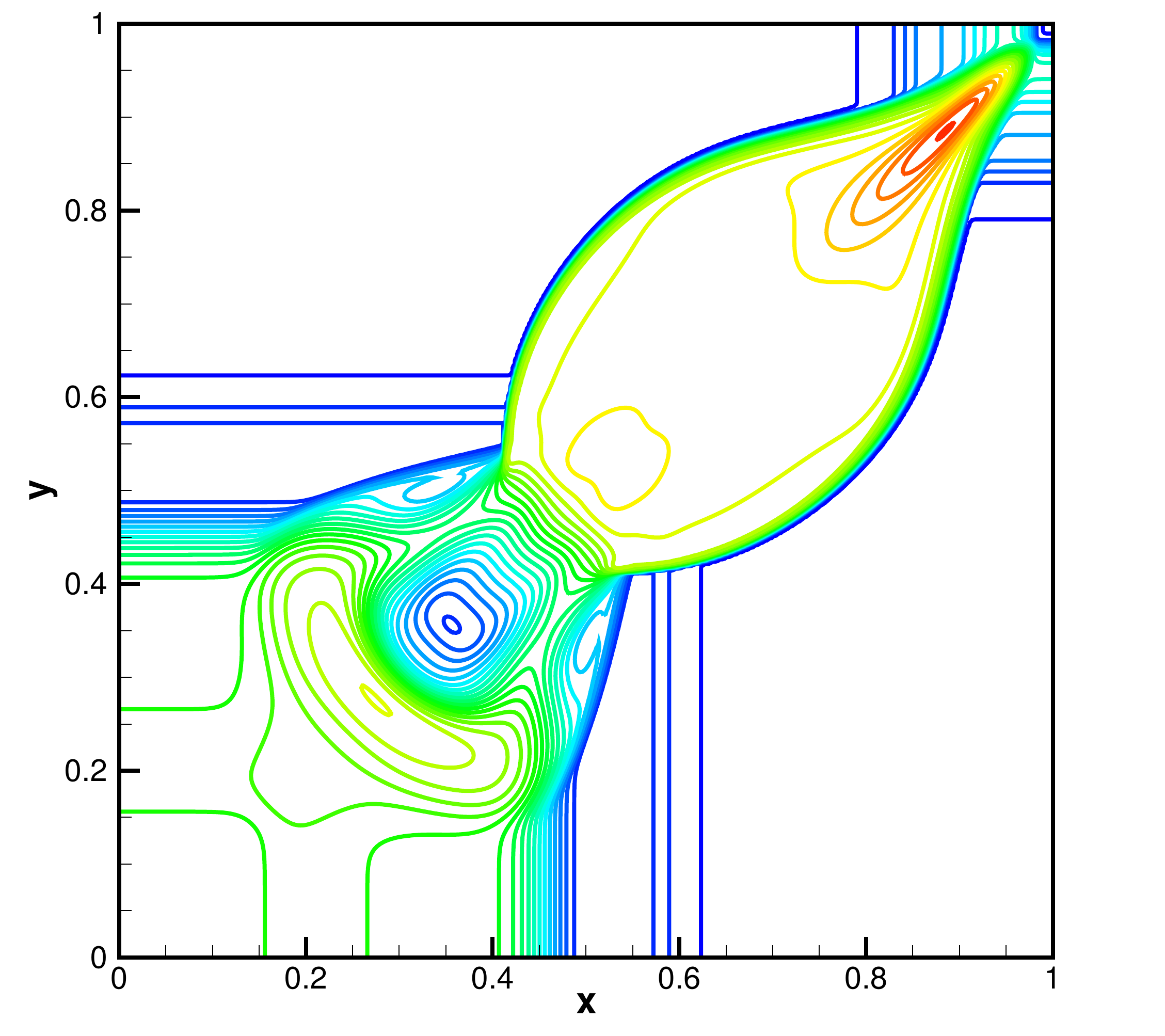}
	\includegraphics[width=2.6in]{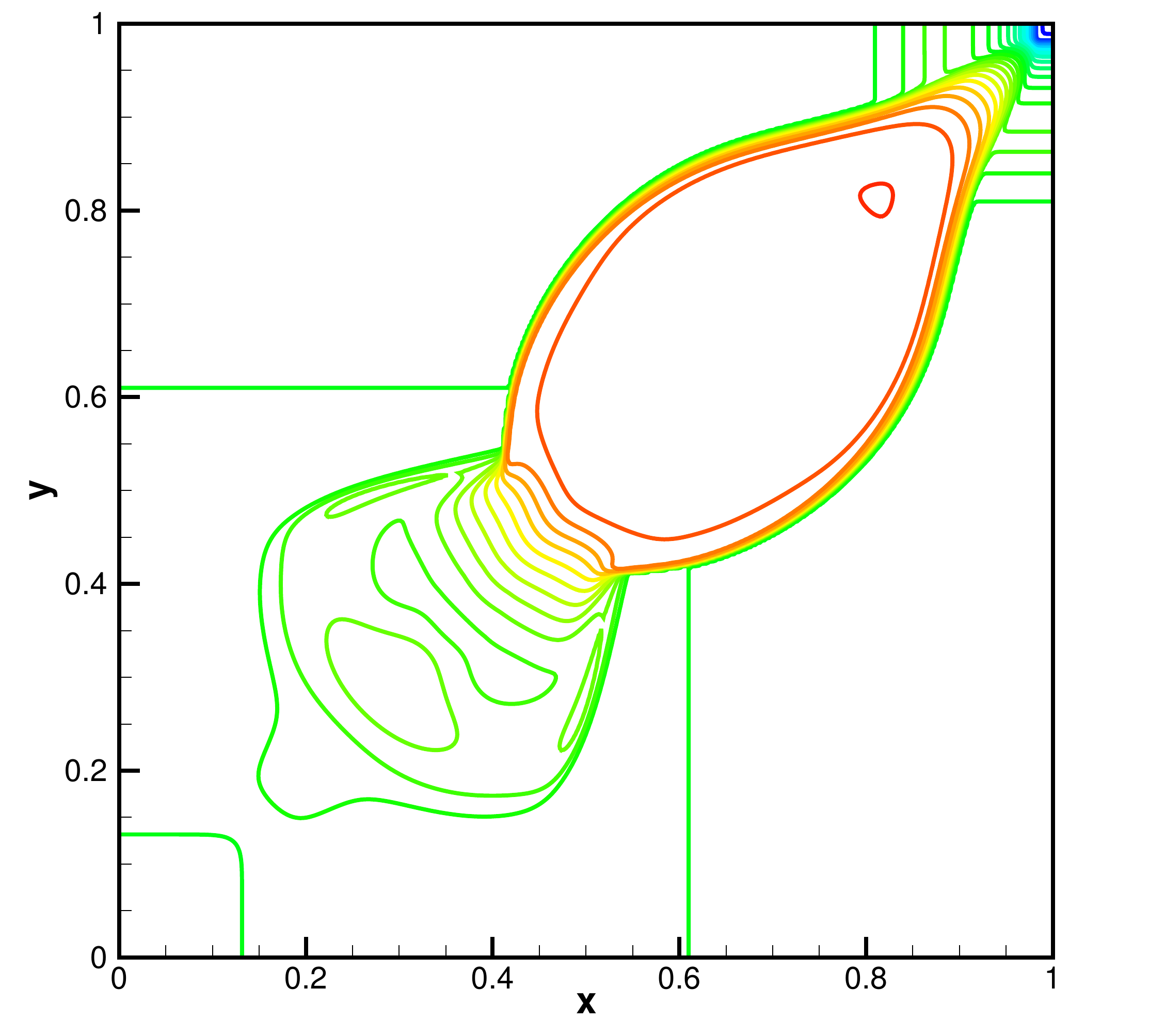}
	\caption{\small Example \ref{exam4}: The contours of the density logarithm $\ln\rho$ (left)
		and the pressure logarithm $\ln p$ (right) at $t=0.4$ obtained from the first-order PCP scheme. 25 equally spaced contour lines are used.}
	\label{fig2}
\end{figure}
\begin{figure}[H]
	\centering
	\includegraphics[width=2.6in]{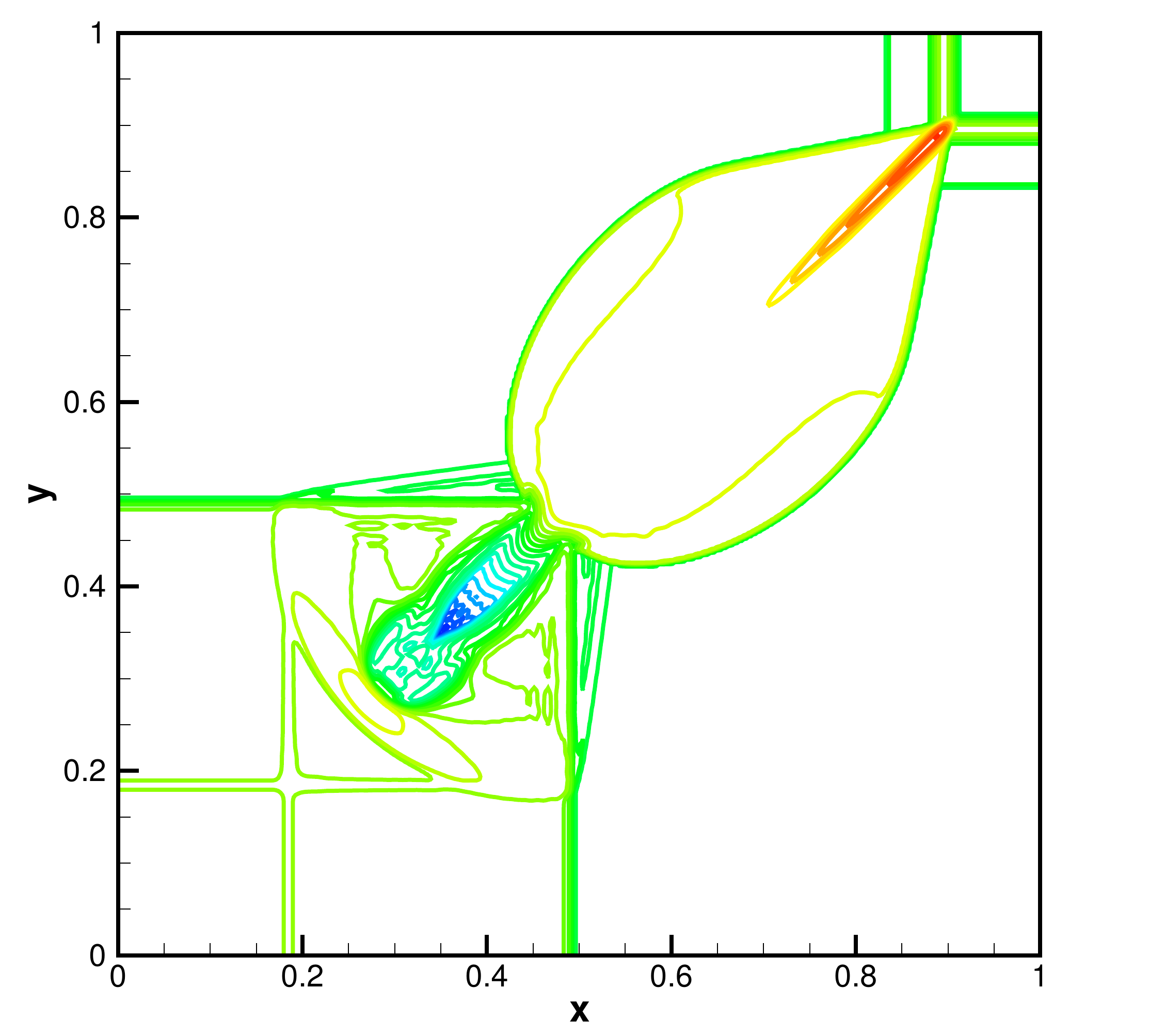}
	\includegraphics[width=2.6in]{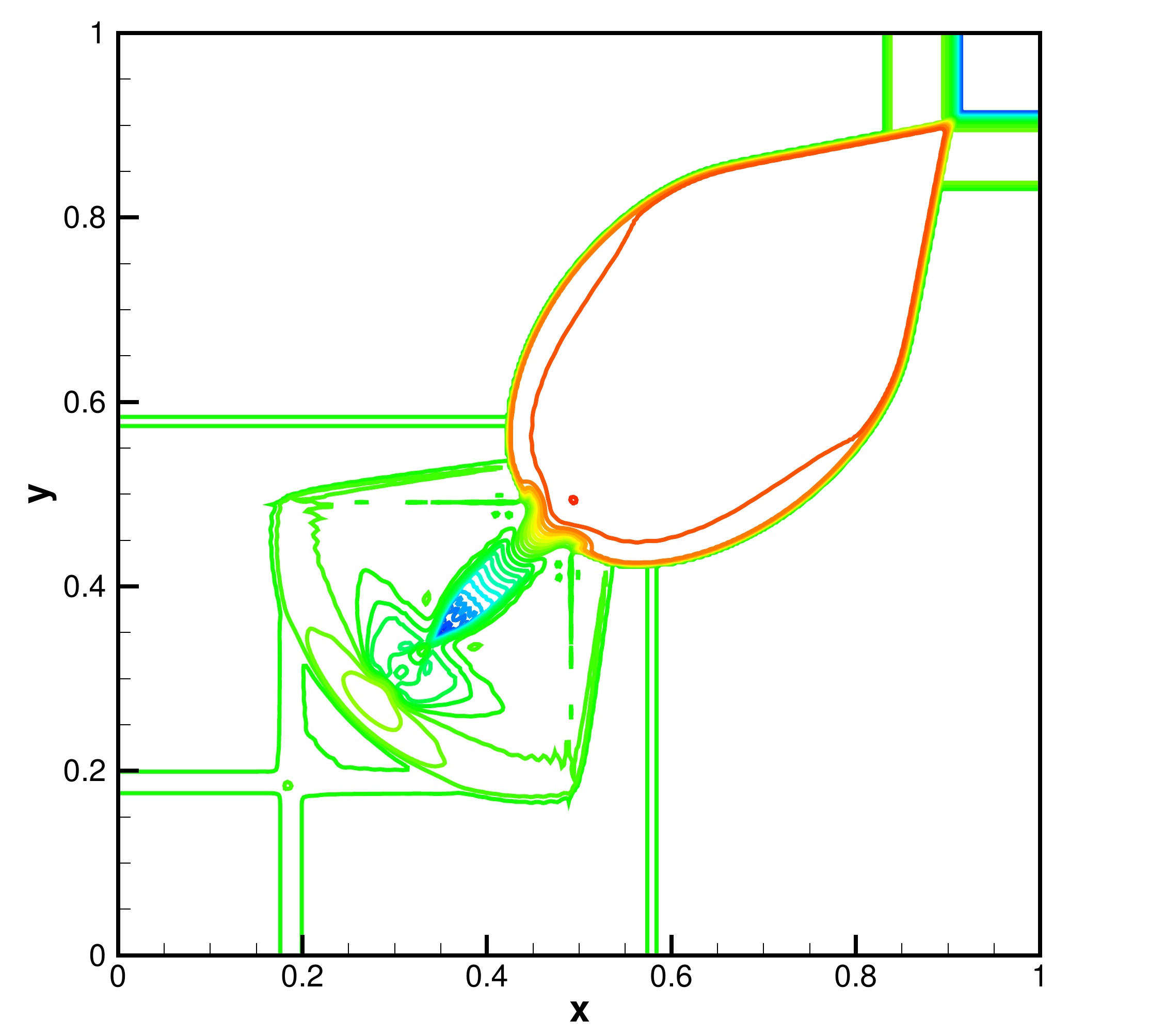}
	\caption{\small Example \ref{exam4}: Same as Figure \ref{fig2} except for the fifth-order PCP scheme.}
	\label{fig2add}
\end{figure}
\begin{figure}[H]
	\centering
	\includegraphics[width=2.6in]{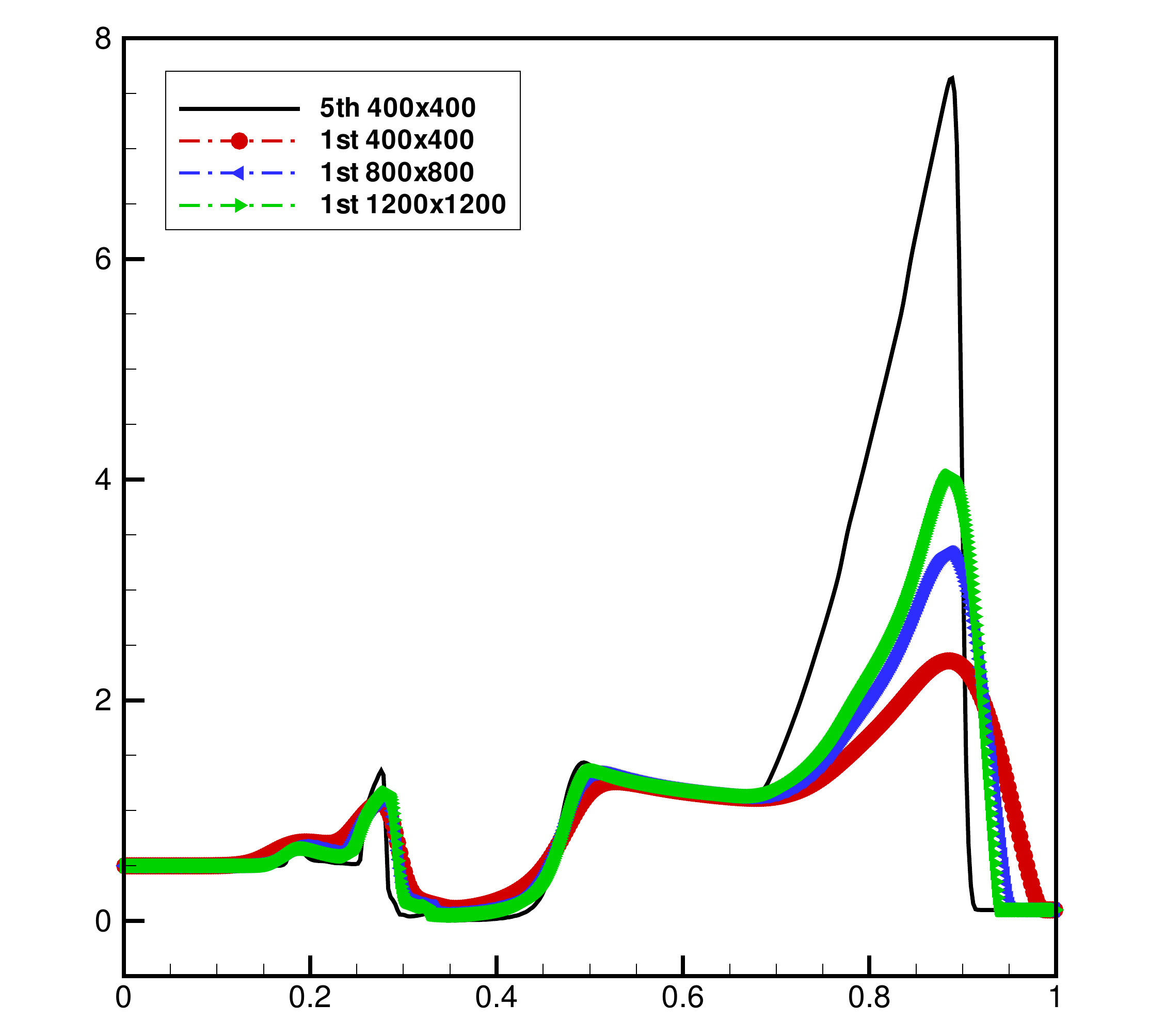}
	\includegraphics[width=2.6in]{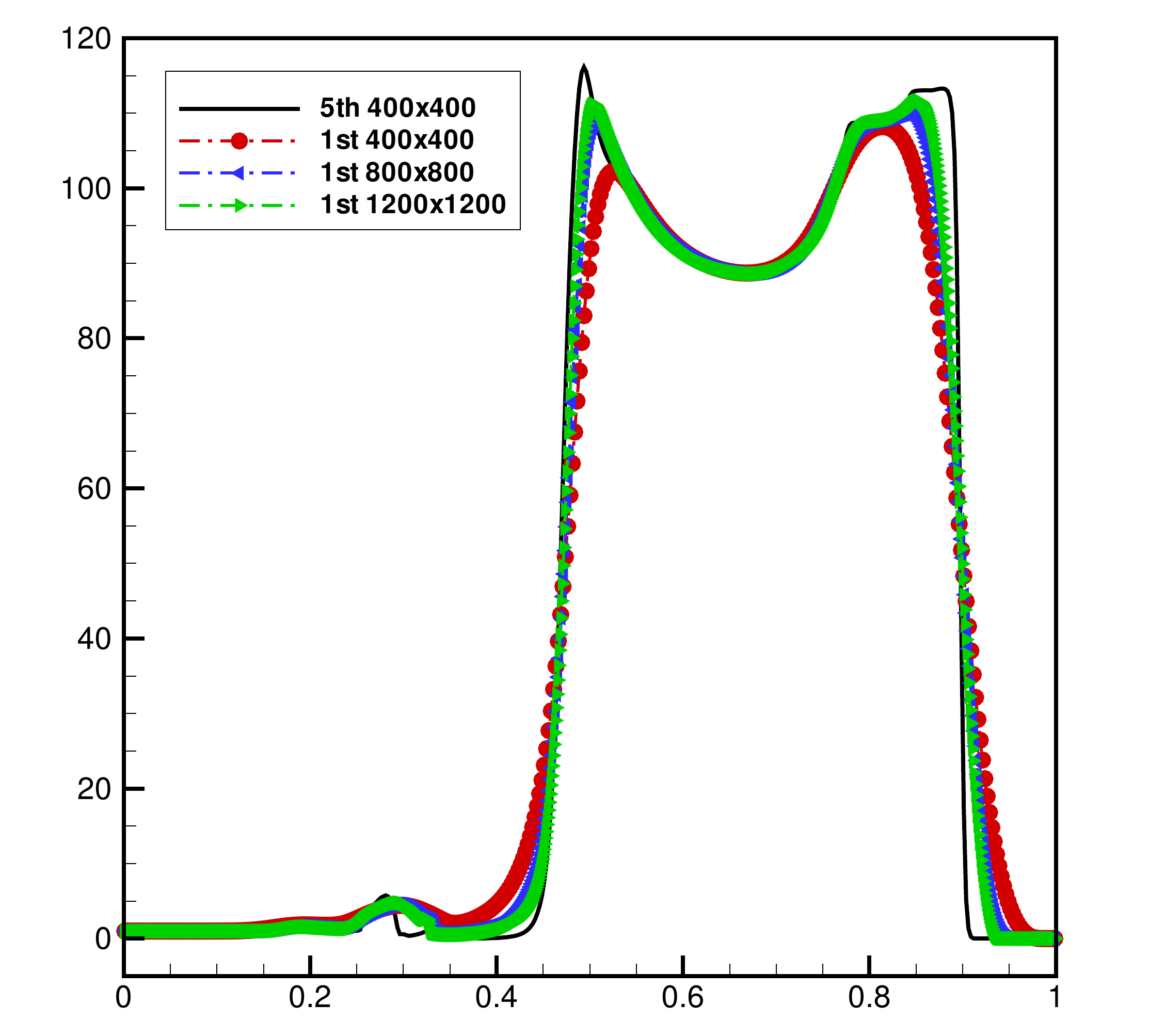}
	\caption{\small Example \ref{exam4}: Comparison of the cross sections of the numerical solutions at the line $y=x$ and $t=0.4$ in the closed interval $x\in[0,1]$ for different meshes and schemes. Left: the rest-mass density $\rho$; Right: the pressure $p$.}
	\label{fig2add1}
\end{figure}
\begin{figure}[H]
	\centering
	\includegraphics[width=2.6in]{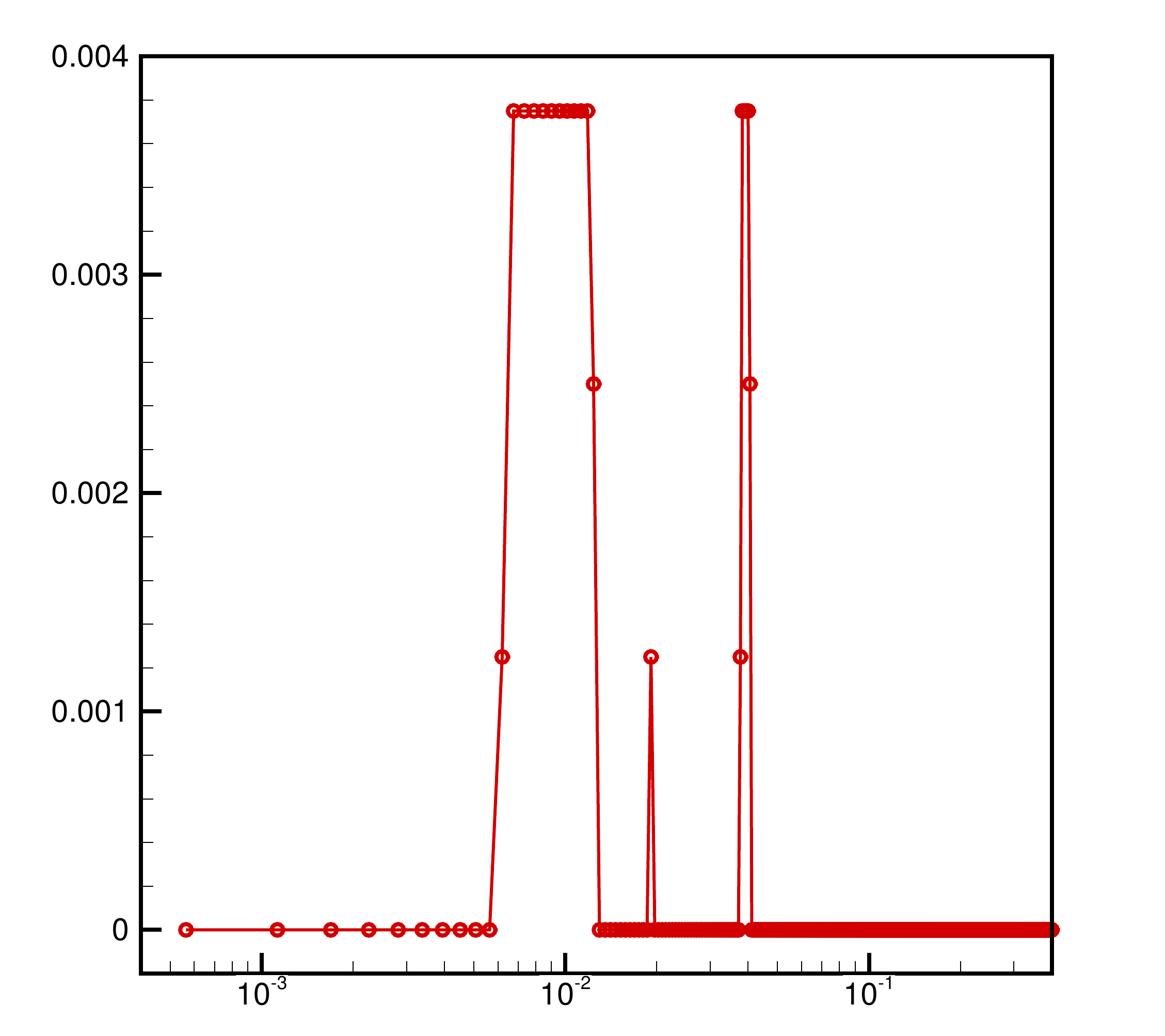}
	\includegraphics[width=2.6in]{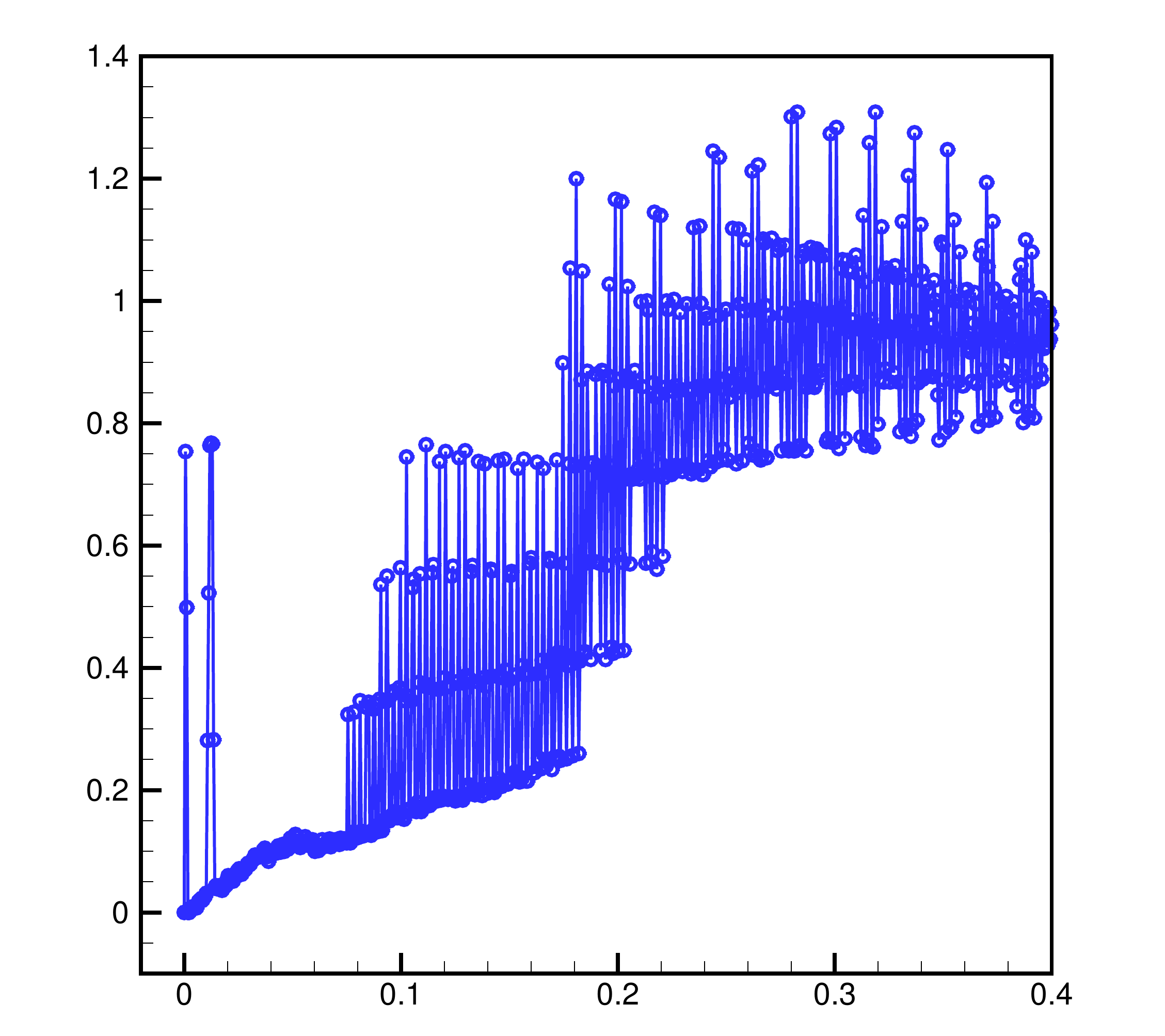}
	\caption{\small Example \ref{exam4}: Proportions of the PCP limited cells at each time level. Left: scaling PCP limiter; Right: PCP flux limiter.}
	\label{limiter-RP1}
\end{figure}

\begin{example}[Riemann problem II \cite{wu2015}]\label{exam5} \rm
	The initial data of the second Riemann problem are
	\begin{equation*}
		(\rho,u,v,p)(x,y,0)=\begin{cases}
			(0.1,0,0,20),& x>0.5,~y>0.5,\\
			(\widetilde{\rho},\widetilde{u},0,0.05),& x<0.5,~y>0.5,\\
			(0.01,0,0,0.05),& x<0.5,~y<0.5,\\
			(\widetilde{\rho},0,\widetilde{u},0.05),& x>0.5,~y<0.5,
		\end{cases}
	\end{equation*}
	with $\widetilde{\rho}=0.00414329639576$, $\widetilde{u}=0.9946418833556542$ and
	 the computational domain $\Omega=[0,1]^2$.
	In this problem, the left and lower initial discontinuities are contact discontinuities, while
	the upper and right are shock waves with a speed of $-0.66525606186639$.
	As the time increases, the maximal value of the fluid velocity becomes very large and close to the
	speed of light, which leads to the numerical simulation more challenging.
	Figures \ref{fig3} and \ref{fig3add} show the contours of the rest-mass density logarithm $\ln\rho$ and
	the pressure logarithm $\ln p$ at $t=0.4$ obtained by using the first- and fifth-order PCP schemes   on the uniform mesh of $400\times400$ cells, respectively.
	The interaction of four initial discontinuities results in the distortion of the initial
	shock waves and the formation of a ``mushroom cloud'' starting from the point (0.5,0.5)
	and expanding to the left bottom region. We also compare the numerical solutions obtained from the first- and high-order schemes
	in Figure \ref{fig3add1}, which displays the plots of the rest-mass logarithm $\ln\rho$ and the pressure logarithm $\ln p$ along the line
	$y=x$. We can see that the fifth-order PCP scheme gets better resolution for discontinuities than the first-order scheme even on a finer mesh.
	Furthermore, we also want to remark that a PCP scheme is necessary to simulate this problem since the PCP flux limiter and scaling PCP limiter are indeed used to preserve the physical-constraints property, see Figure \ref{limiter-RP2}.
\end{example}
\vspace{-2ex}
\begin{figure}[H]
\centering
\includegraphics[width=2.6in]{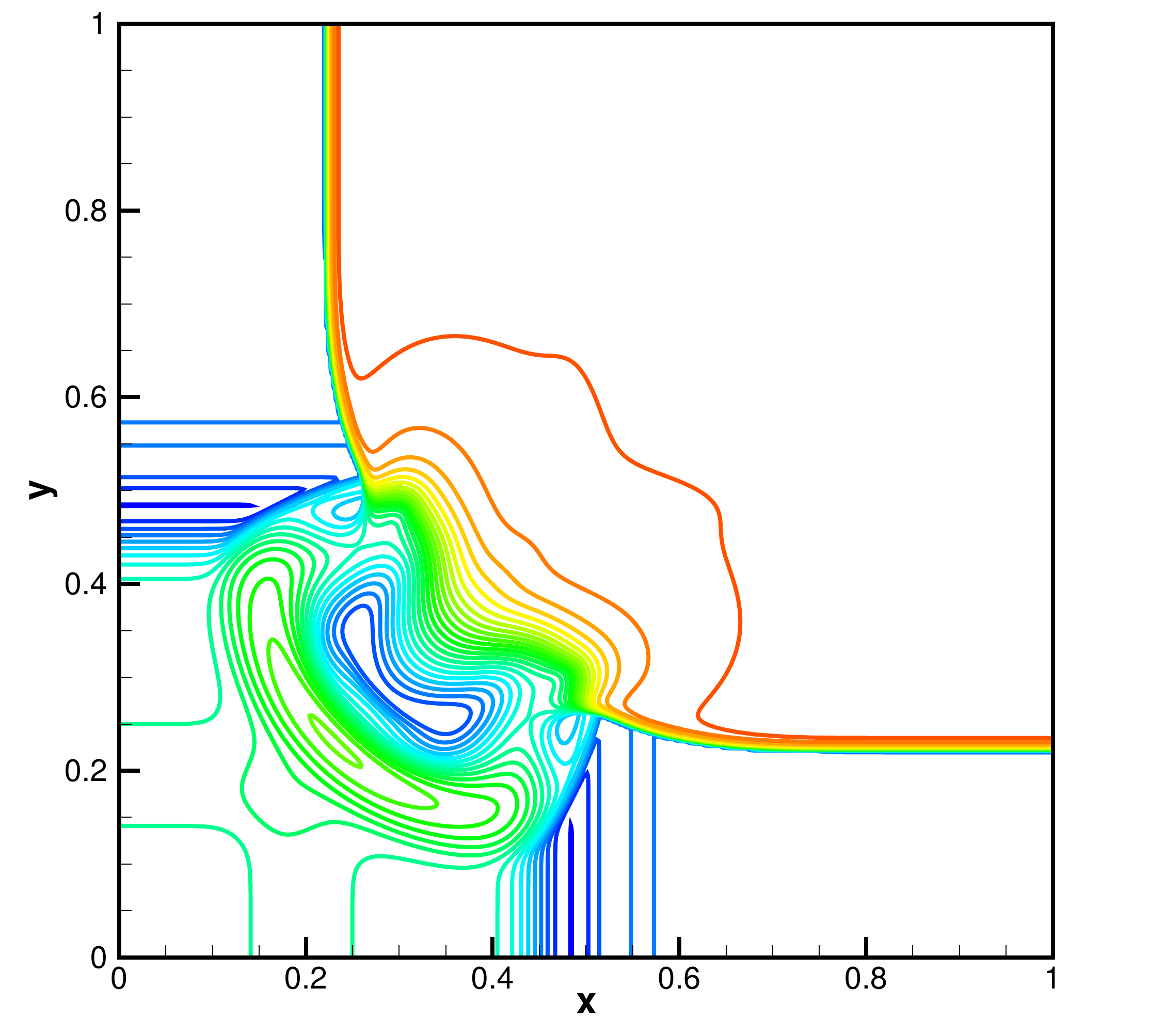}
\includegraphics[width=2.6in]{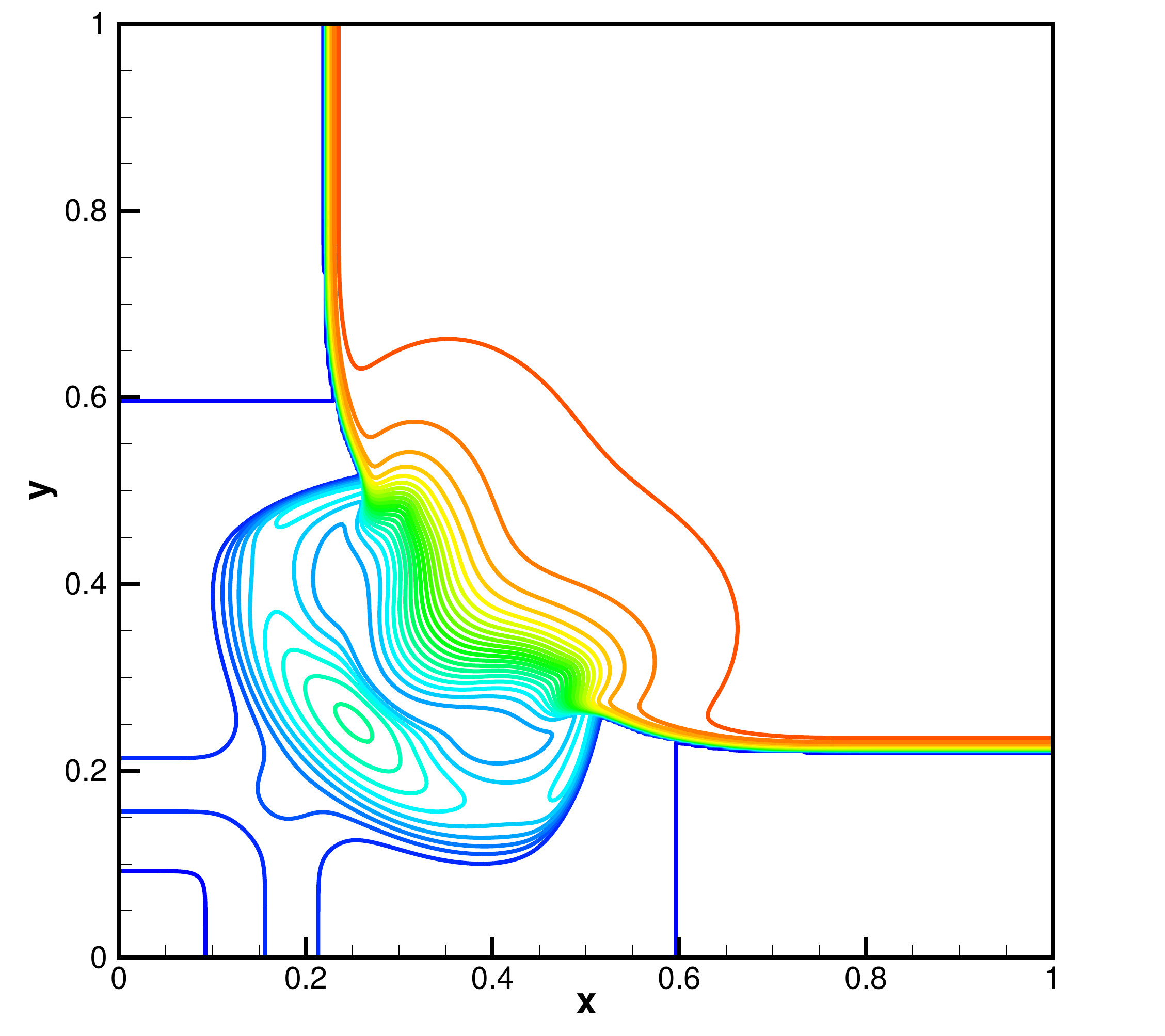}
\caption{\small Example \ref{exam5}: The contours of the density logarithm $\ln\rho$ (left)
		and the pressure
		logarithm $\ln p$ (right) at $t=0.4$ obtained from the first-order PCP scheme. 25 equally spaced contour lines are shown.}
\label{fig3}
\end{figure}
\begin{figure}[H]
	\centering
	\includegraphics[width=2.6in]{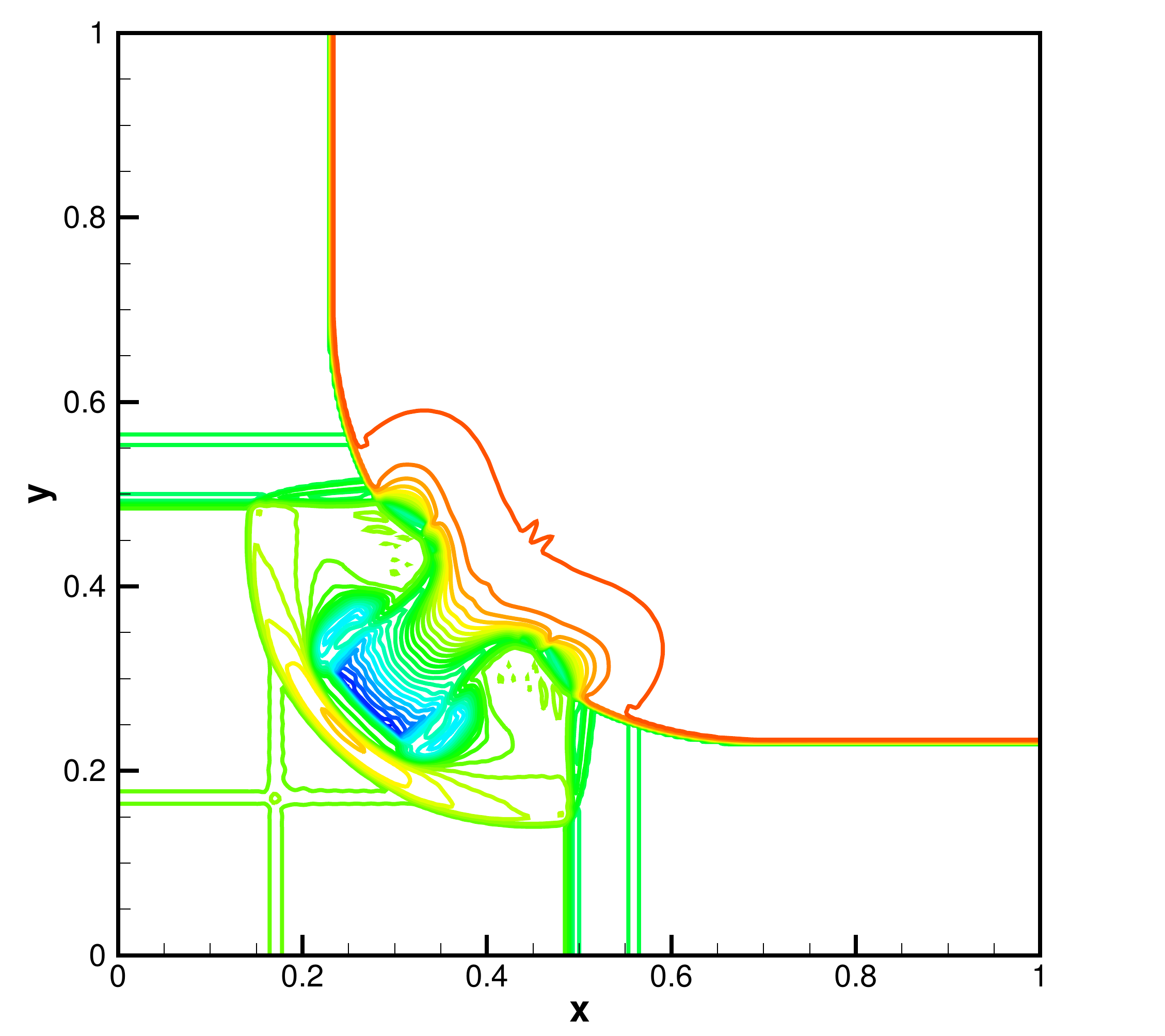}
	\includegraphics[width=2.6in]{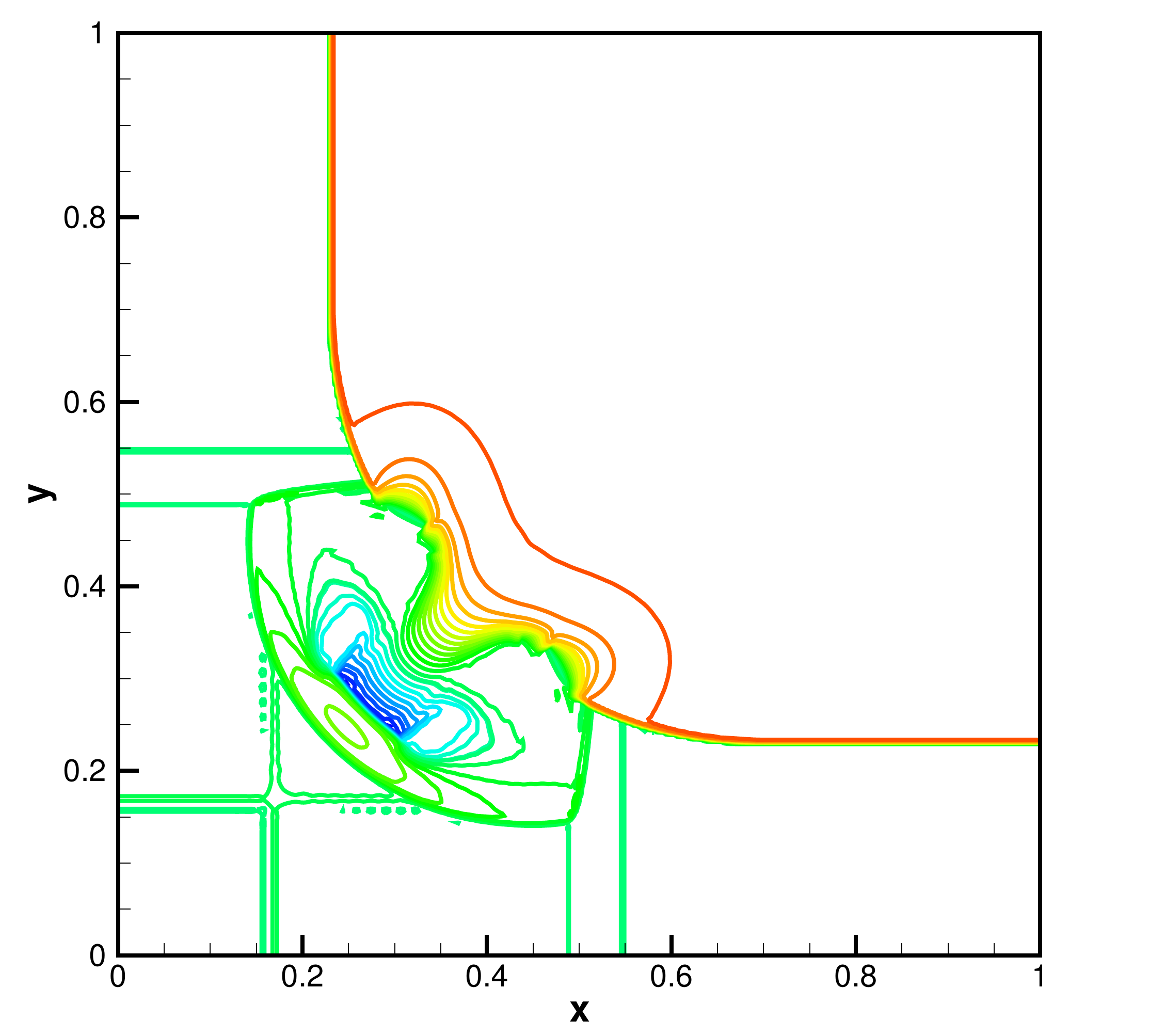}
	\caption{\small Example \ref{exam5}: Same as Figure \ref{fig3} except for the fifth-order PCP scheme.}
	\label{fig3add}
\end{figure}
\begin{figure}[H]
	\centering
	\includegraphics[width=2.6in]{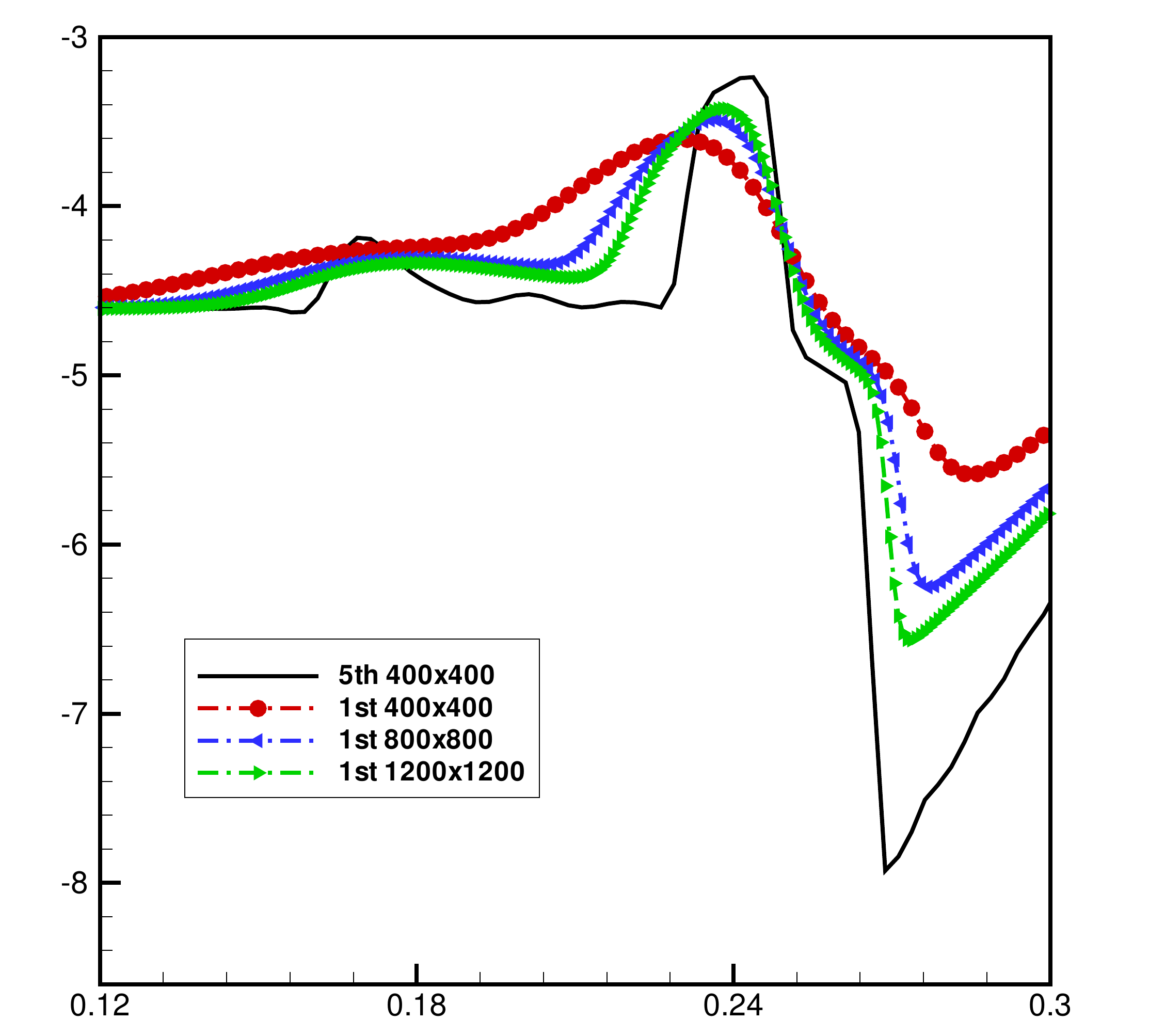}
	\includegraphics[width=2.6in]{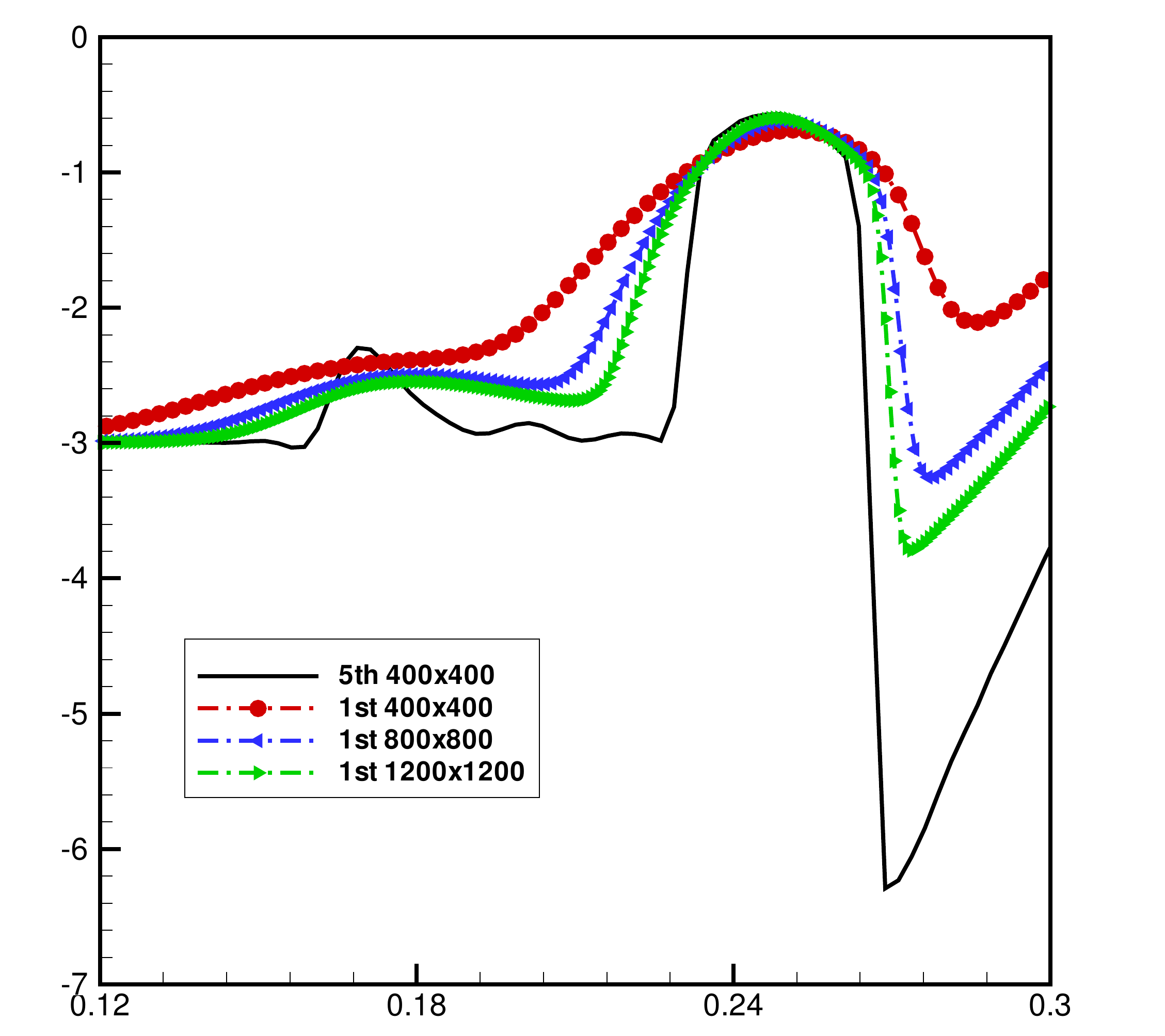}
	\caption{\small Example \ref{exam5}: Comparison of the cross sections of the numerical solutions at the line $y=x$ and $t=0.4$ for different meshes and schemes in the closed interval $x\in[0.12,0.3]$. Left: the rest-mass density logarithm $\ln\rho$; Right: the pressure logarithm $\ln p$.}
	\label{fig3add1}
\end{figure}
\begin{figure}[H]
	\centering
	\includegraphics[width=2.6in]{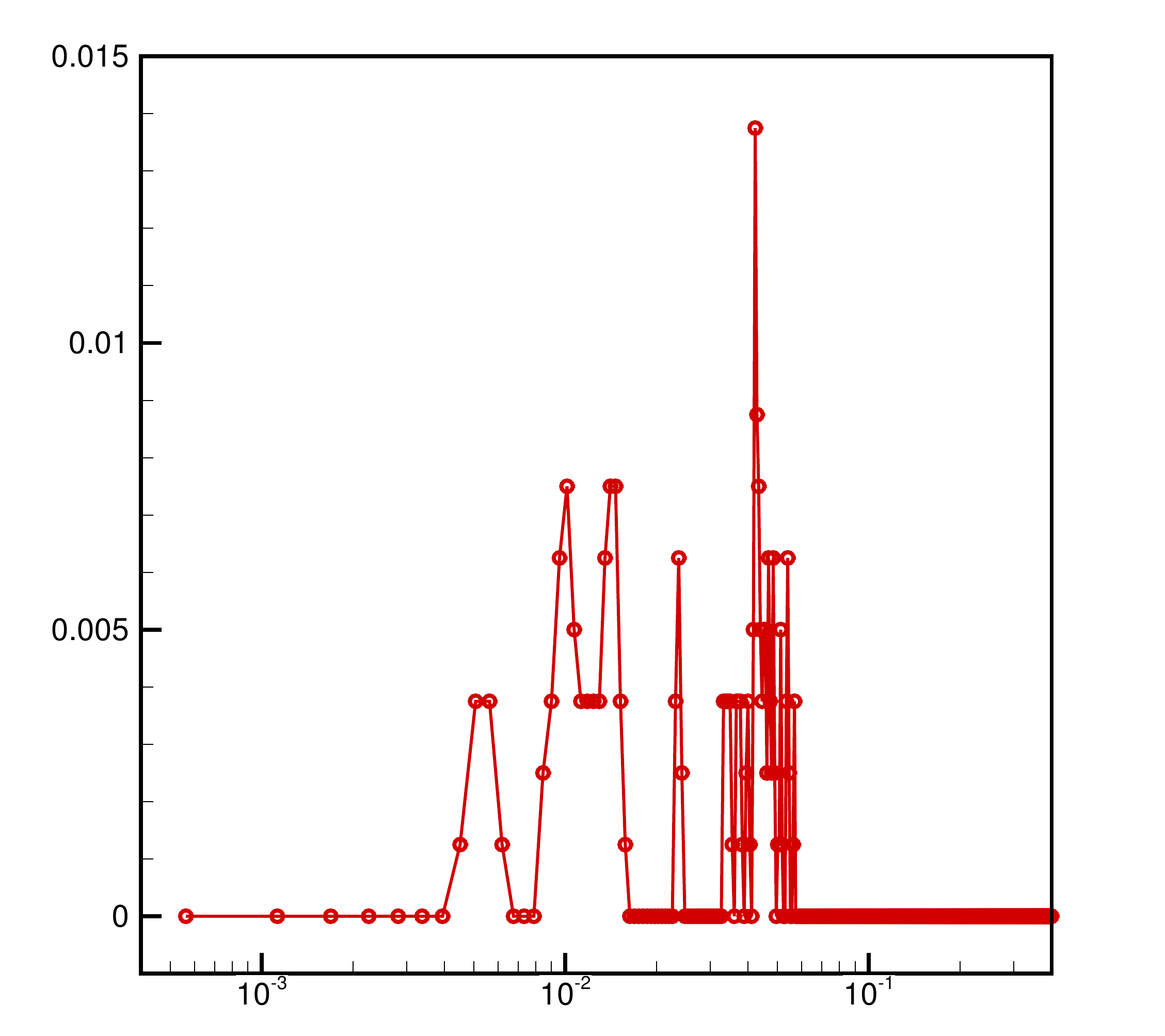}
	\includegraphics[width=2.6in]{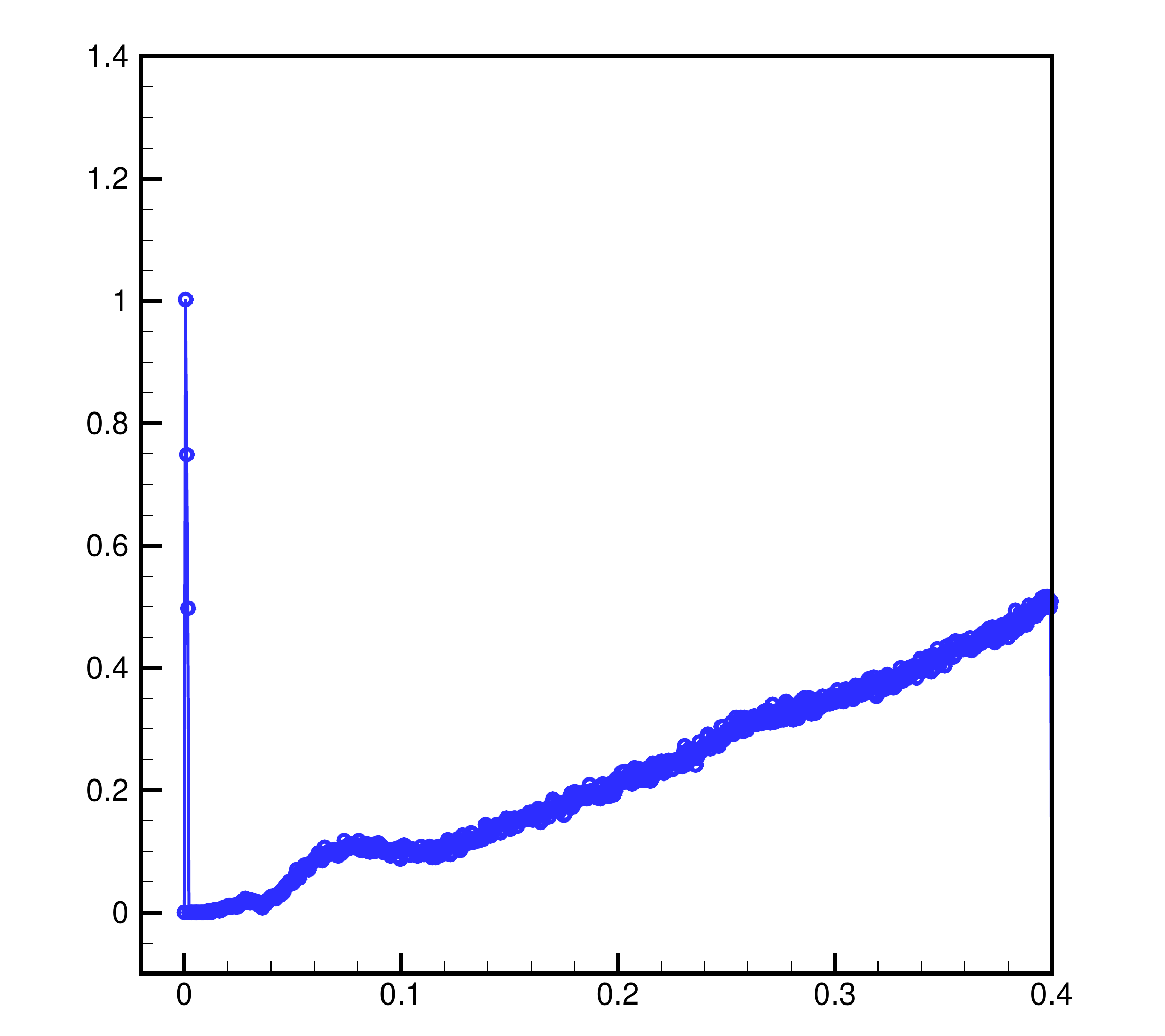}
	\caption{\small Example \ref{exam5}: Proportions of the PCP limited cells at each time level. Left: scaling PCP limiter; Right: PCP flux limiter.}
	\label{limiter-RP2}
\end{figure}

\begin{example}[Relativistic jets]\label{exam6} \rm
	The last example is to simulate the high-speed relativistic jet flows, which are ubiquitous in the extragalactic radio sources associated with the active galactic nuclei, and the most compelling case for a special relativistic phenomenon \cite{wu2017}.
	The simulation of such jet flows is full of challenge since there may appear the strong relativistic shock waves, shear waves, interface instabilities, and ultra-relativistic regions, as well as high speed
	jets etc.
	
	Here we consider a pressure-matched hot jet model, in which
	the relativistic effects from the large beam internal energies are
	important and comparable to the effects from the fluid velocity
	near the speed of light because the classical beam Mach number $M_b=1.72$ is near the minimum Mach number for given beam speed
	$v_b$. We remark that the data setting is the same as that in \cite{wu2017} but the EOS is different.
	% the beam is moving at a speed $v_b$,.
	Initially, the computational domain
	$[0,12]\times[0, 30]$ is filled with a static uniform medium with an
	unit rest-mass density, and a light relativistic jet is injected in the
	$y$-direction through the inlet part $|x|\le0.5$ on the bottom
	boundary $(y=0)$ with a high speed $v_b$, a rest-mass density of 0.01, and
	a pressure equal to the ambient pressure.
	The  fixed inflow beam condition
	is specified on the nozzle $\{y=0,|x|\le0.5\}$, the reflecting boundary
	condition is specified at $x=0$, whereas the
	outflow boundary conditions are on other boundaries.
	The following three different cases are considered:
	\begin{enumerate}[(\romannumeral1)]
		\item $v_b=0.99$, corresponding to the case of $\gamma\approx7.089$ and
		$M_r\approx9.971$.
		\item $v_b=0.999$, corresponding to the case of $\gamma\approx22.366$ and
		$M_r\approx31.316$.
		\item $v_b=0.9999$, corresponding to the case of $\gamma\approx70.712$ and
		$M_r\approx98.962$.
	\end{enumerate} 
	Here  $M_r:=M_b\gamma/\gamma_s$ denotes the relativistic Mach number
	with $\gamma_s=1/\sqrt{1-c_s^2}$ being the Lorentz factor associated with the
	local sound speed.
	
	As $v_b$ becomes much closer to the speed of light, the simulation of the jet becomes more challenging.
	Figures \ref{fig4}-\ref{fig7} display the schlieren images of the rest-mass density logarithm
	$\ln\rho$ and the
	pressure logarithm $\ln p$ within the domain $[-12,12]\times[0,30]$ at $t=30$ obtained by using
	the first- and the fifth-order schemes  on $240\times600$ uniform meshes
	for the computational domain $[0,12]\times[0,30]$. It is clear to observe that the high-order scheme can capture the
	beam interfaces much better than the first-order scheme.
\end{example}
	\begin{figure}[H]
		\centering
		\includegraphics[width=1.9in]{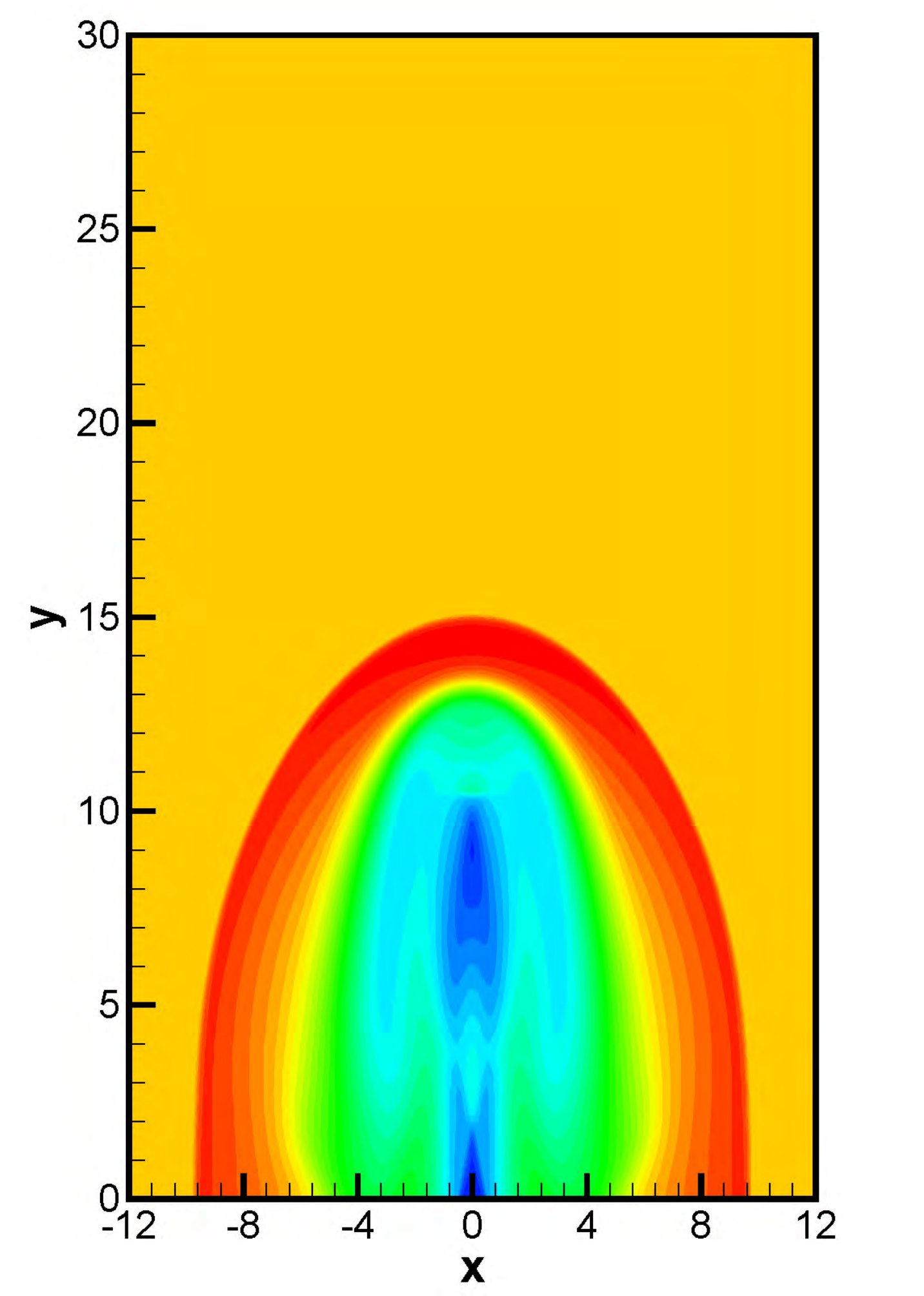}
		\includegraphics[width=1.9in]{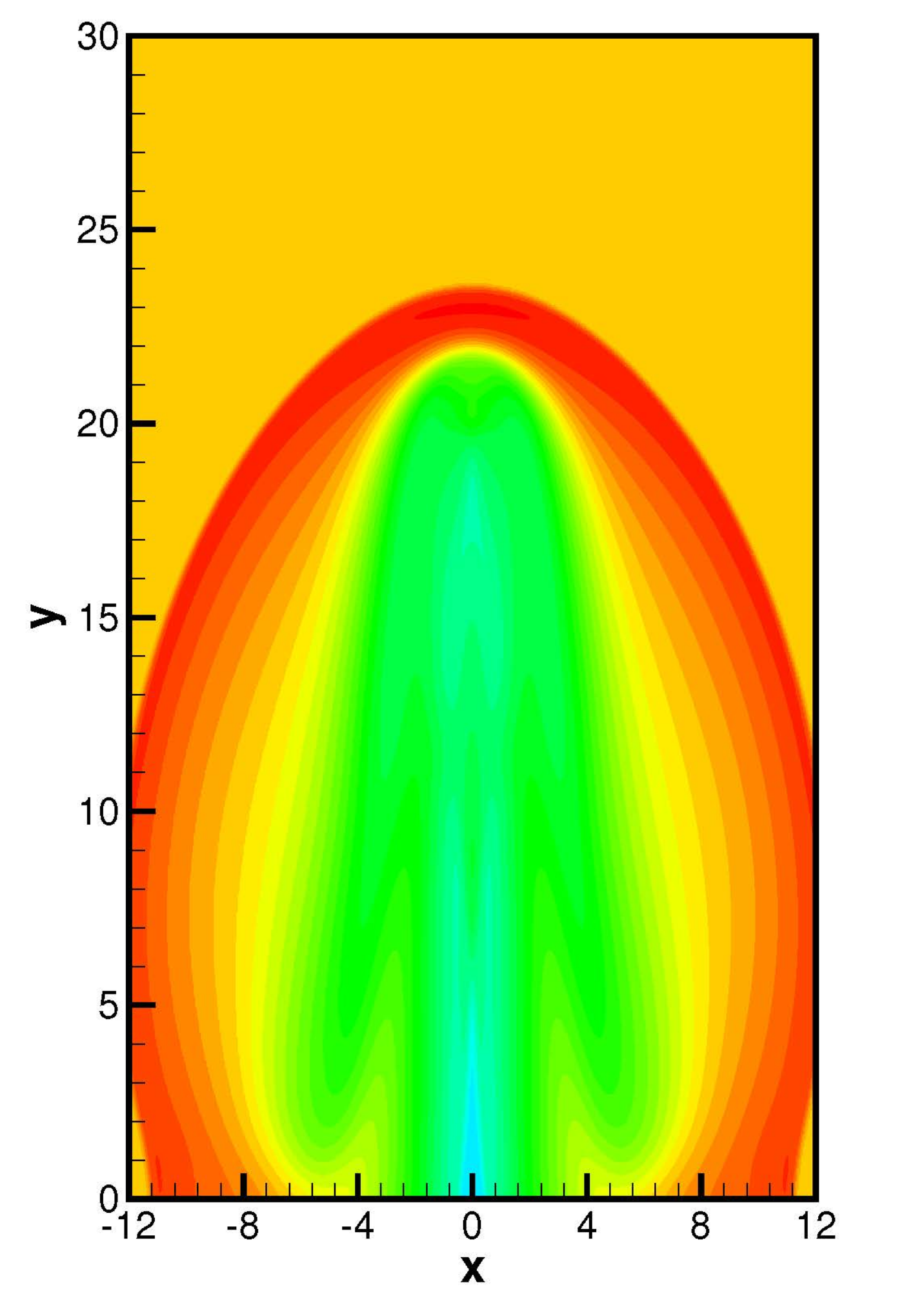}
		\includegraphics[width=1.9in]{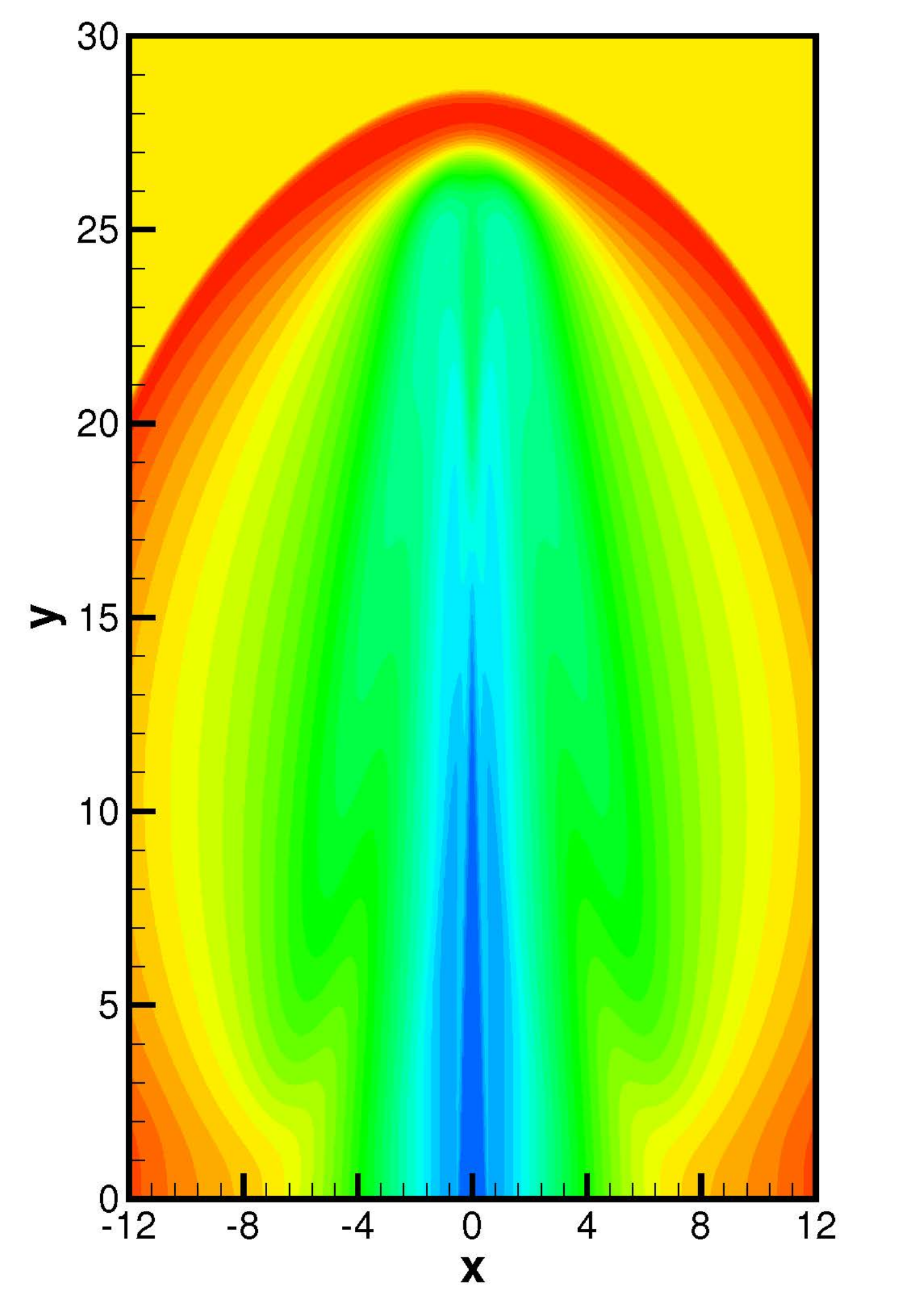}
		\caption{\small Example \ref{exam6}: Schlieren images of rest-mass density logarithm $\ln\rho$
			at $t=30$ for the hot jet model obtained by the first-order PCP scheme with $240\times600$ uniform cells. From left to right:
			configurations (\romannumeral1), (\romannumeral2) and (\romannumeral3).}
		\label{fig4}
	\end{figure}

	\begin{figure}[H]
	\centering
	\includegraphics[width=1.9in]{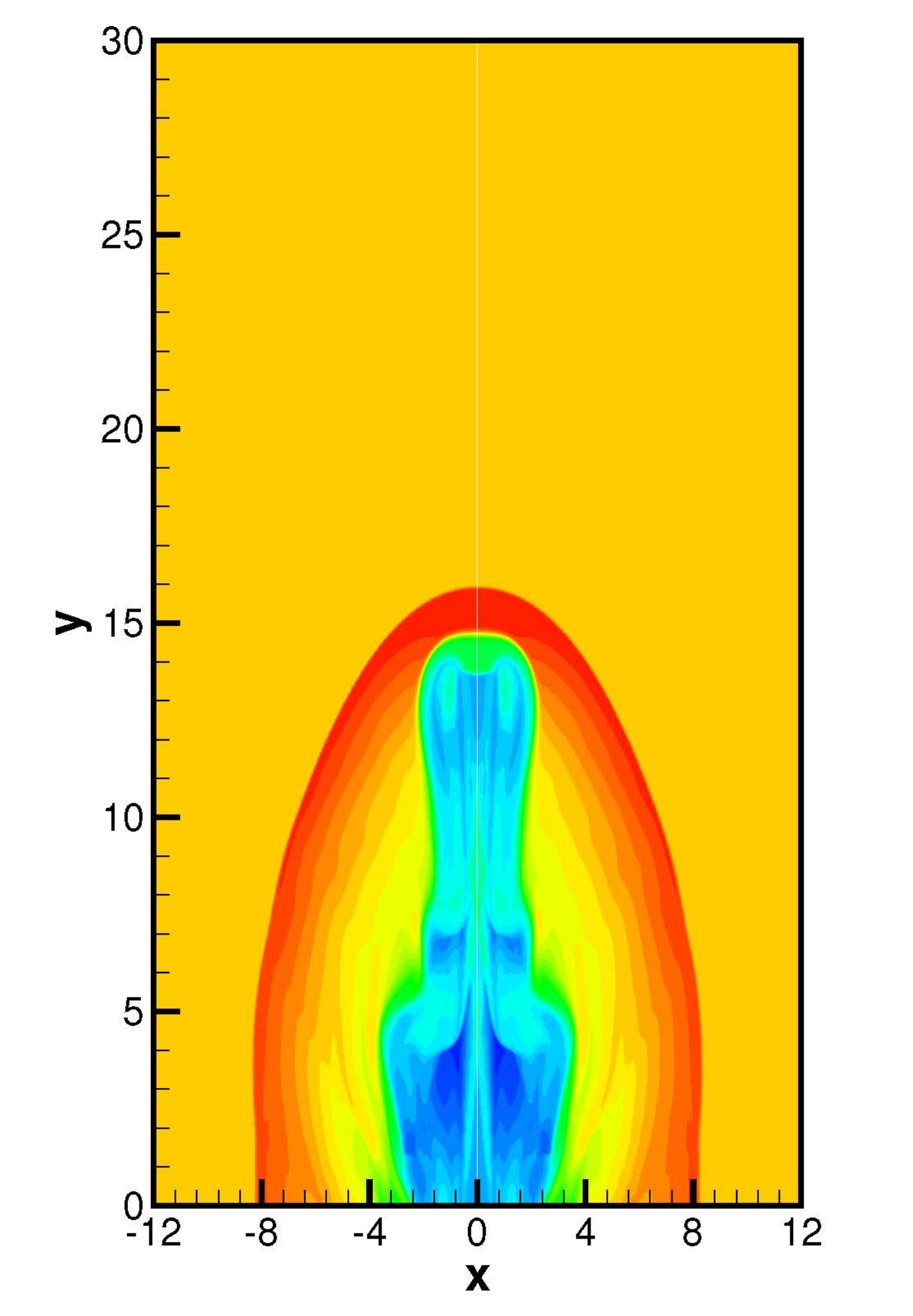}
	\includegraphics[width=1.9in]{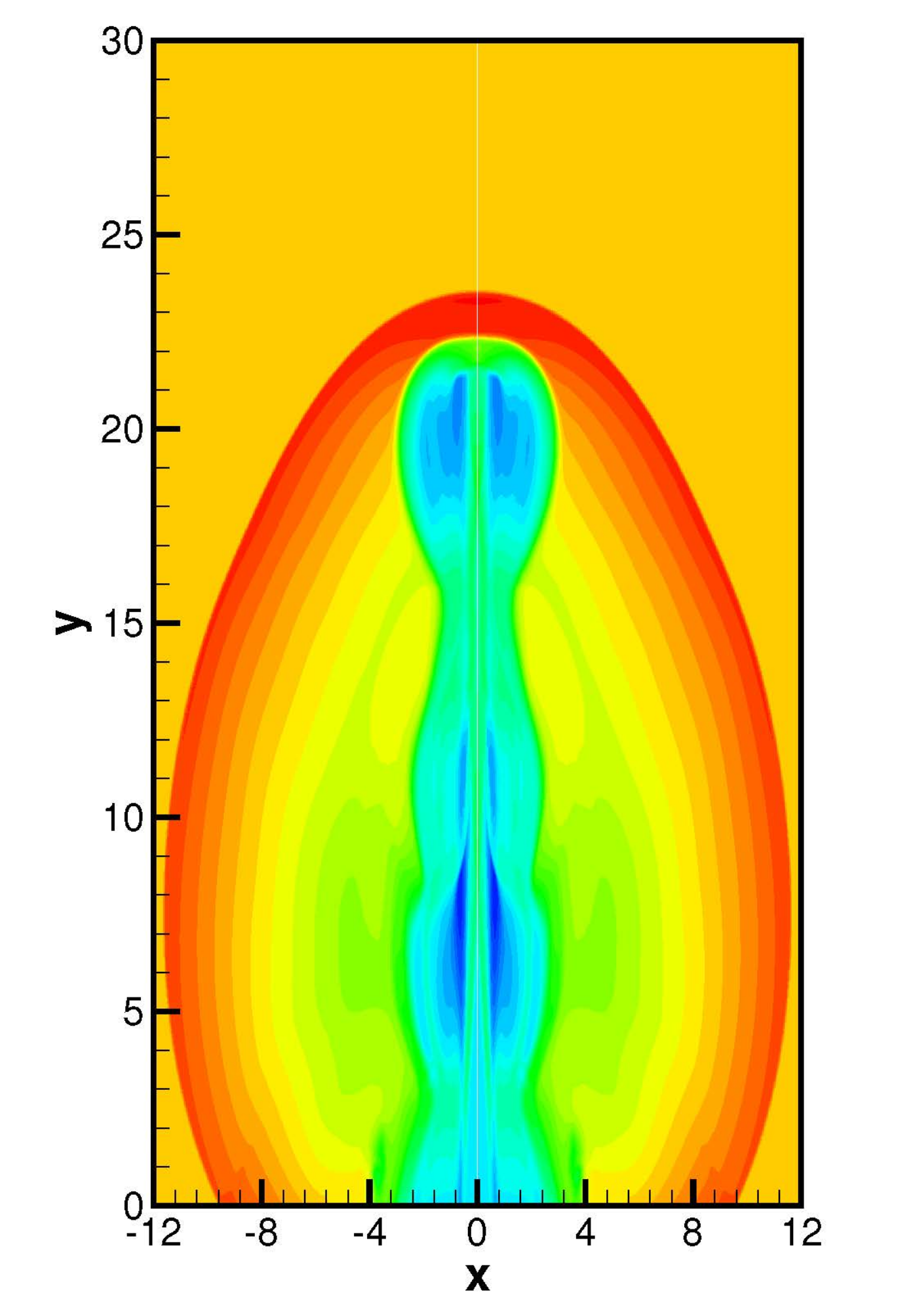}
	\includegraphics[width=1.9in]{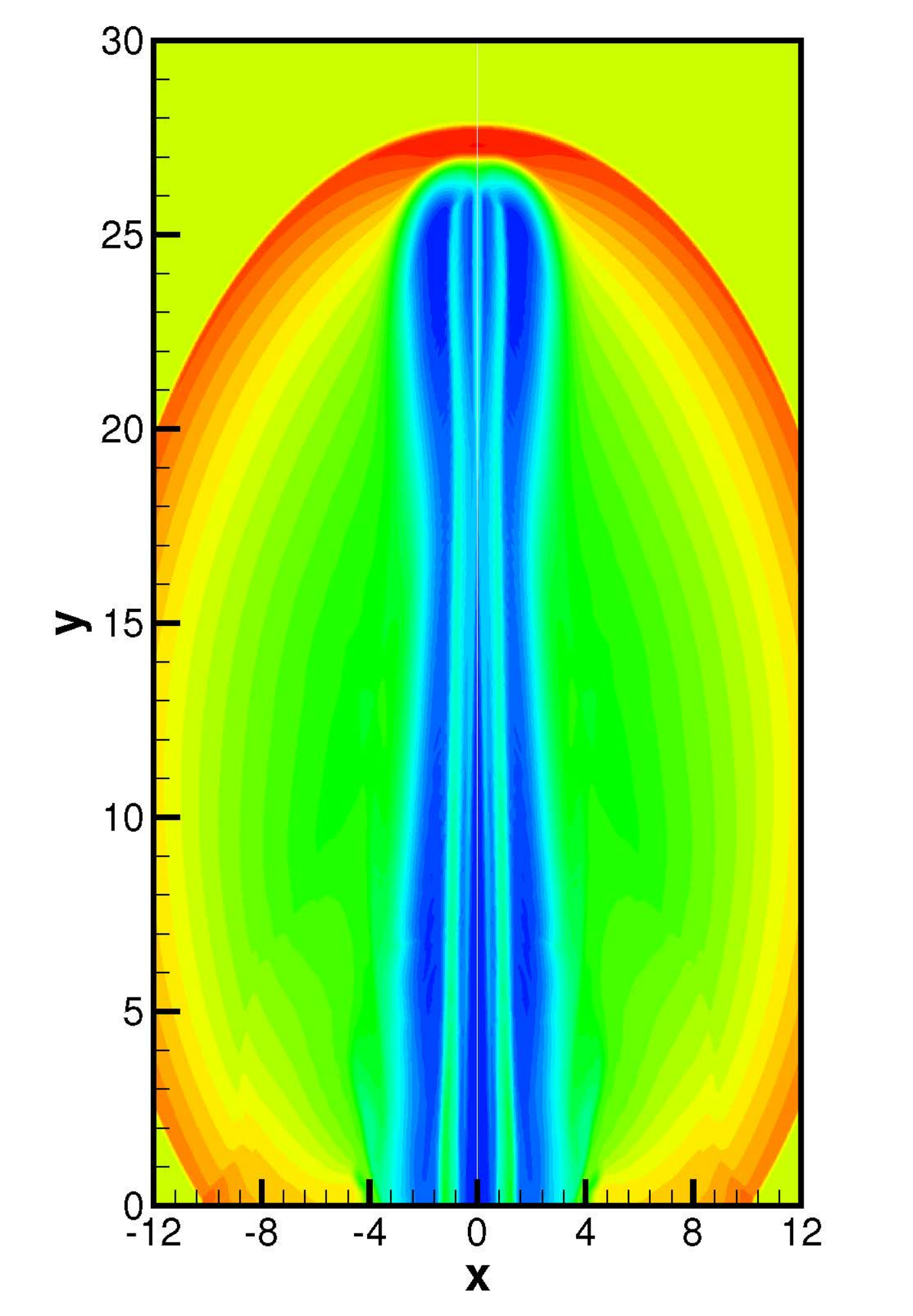}
	\caption{\small Example \ref{exam6}: Same as Figure \ref{fig4} except for the fifth-order PCP scheme.}
	\label{fig5}
	\end{figure}

	\begin{figure}[H]
	\centering
	\includegraphics[width=1.9in]{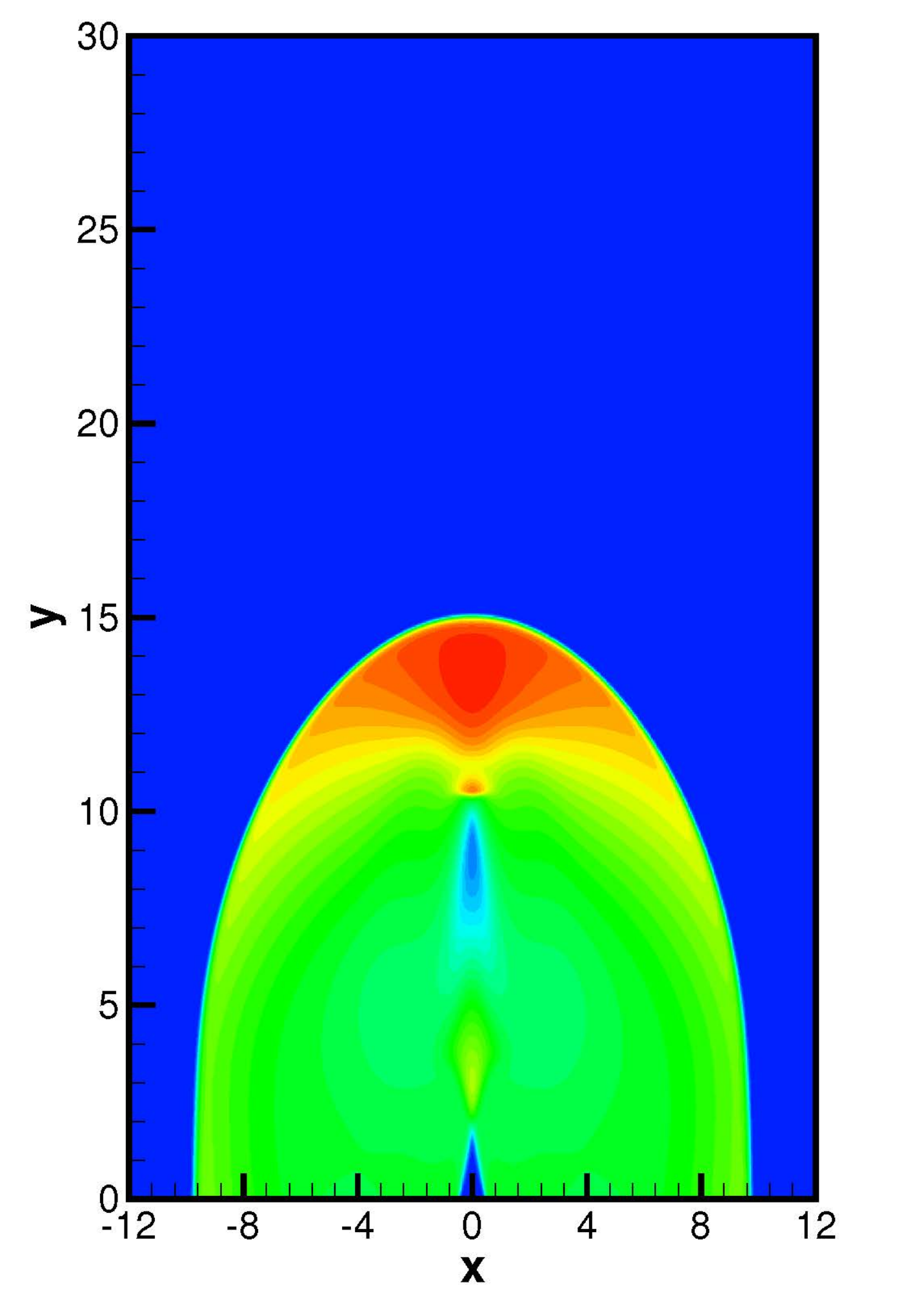}
	\includegraphics[width=1.9in]{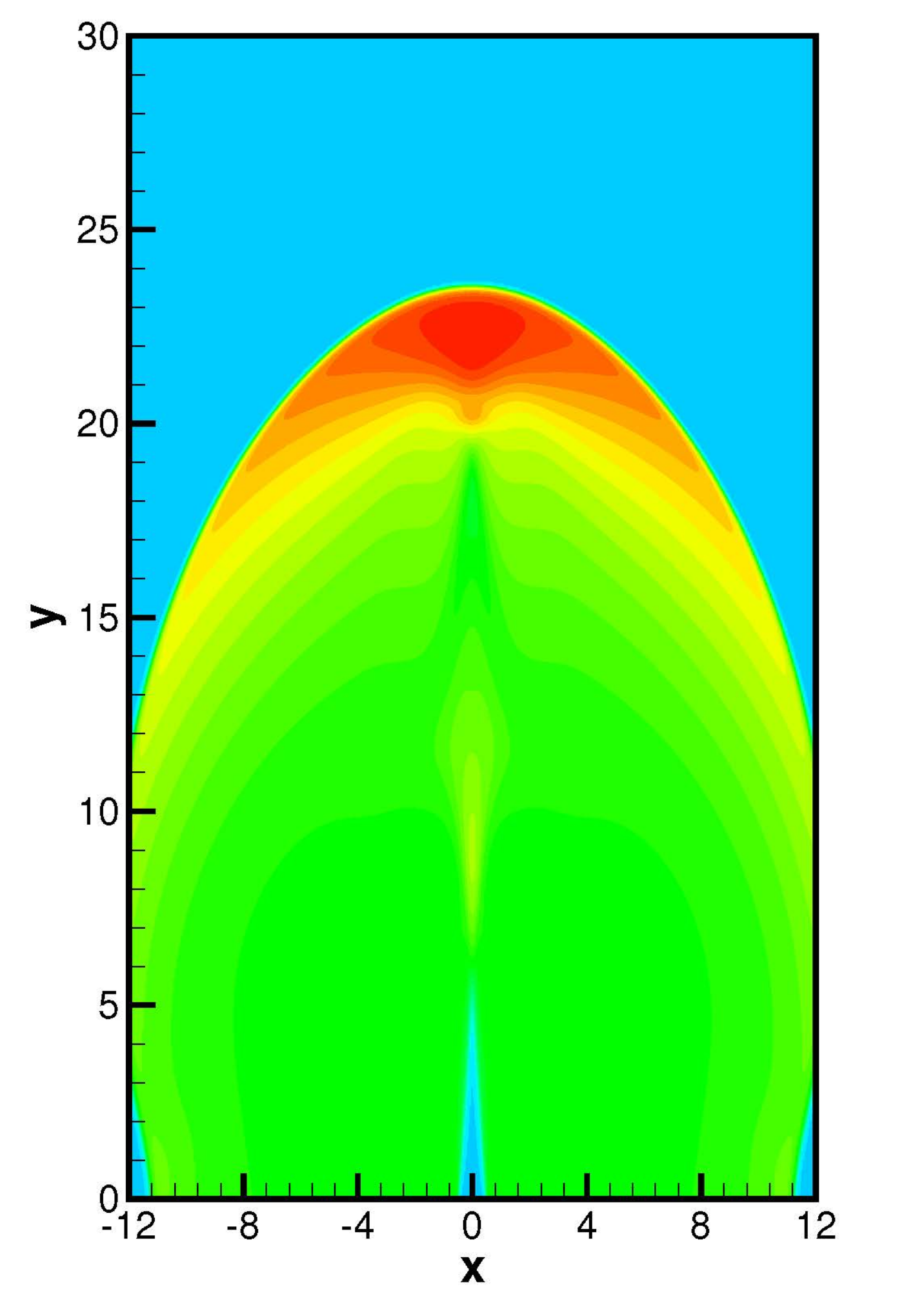}
	\includegraphics[width=1.9in]{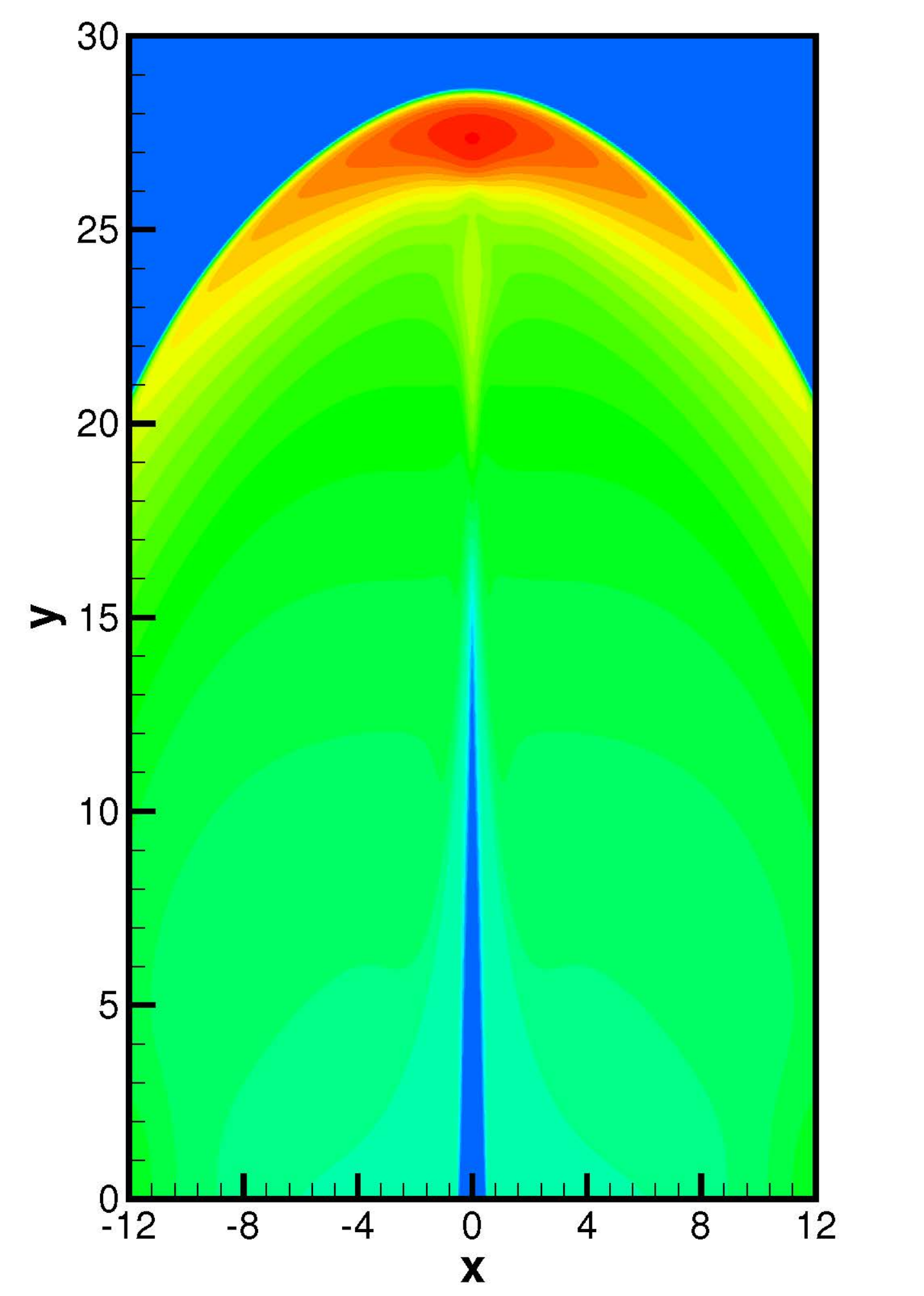}
	\caption{\small Example \ref{exam6}: Schlieren images of pressure logarithm $\ln p$
		at $t=30$ for the hot jet model obtained by the first-order PCP scheme with $240\times600$ uniform cells. From left to right:
		configurations (\romannumeral1), (\romannumeral2) and (\romannumeral3).}
	\label{fig6}
	\end{figure}

\begin{figure}[H]
	\centering
	\includegraphics[width=1.9in]{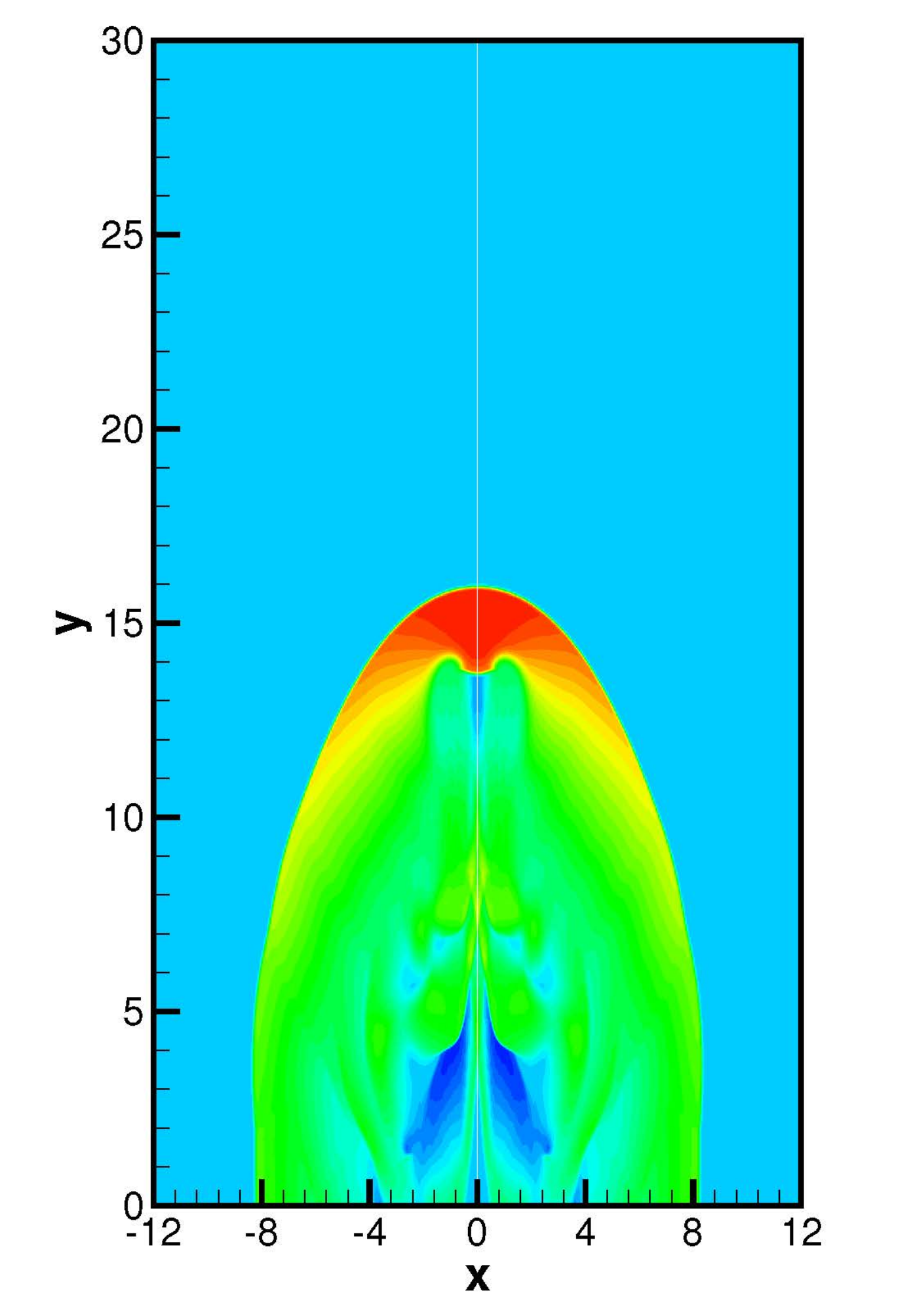}
	\includegraphics[width=1.9in]{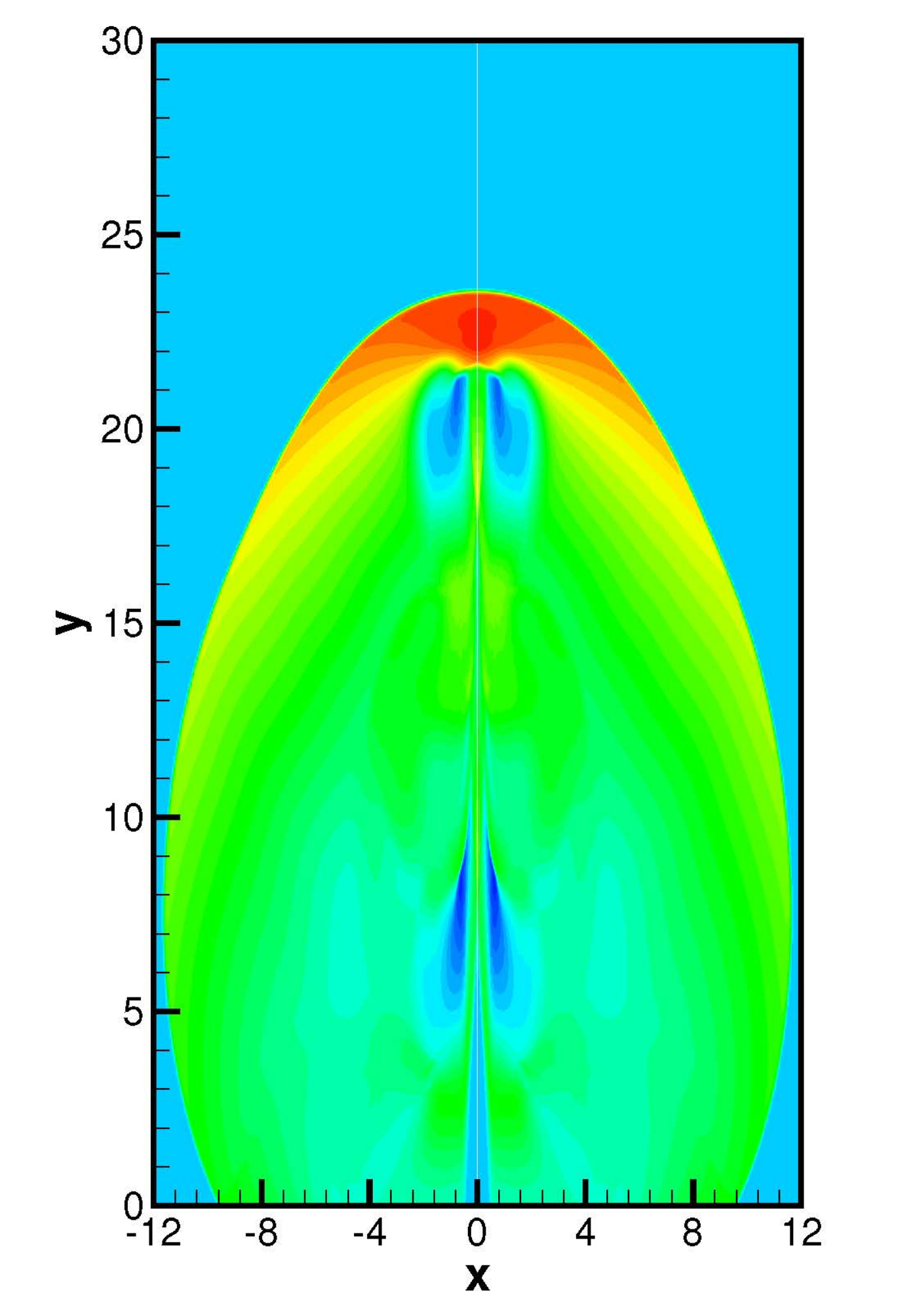}
	\includegraphics[width=1.9in]{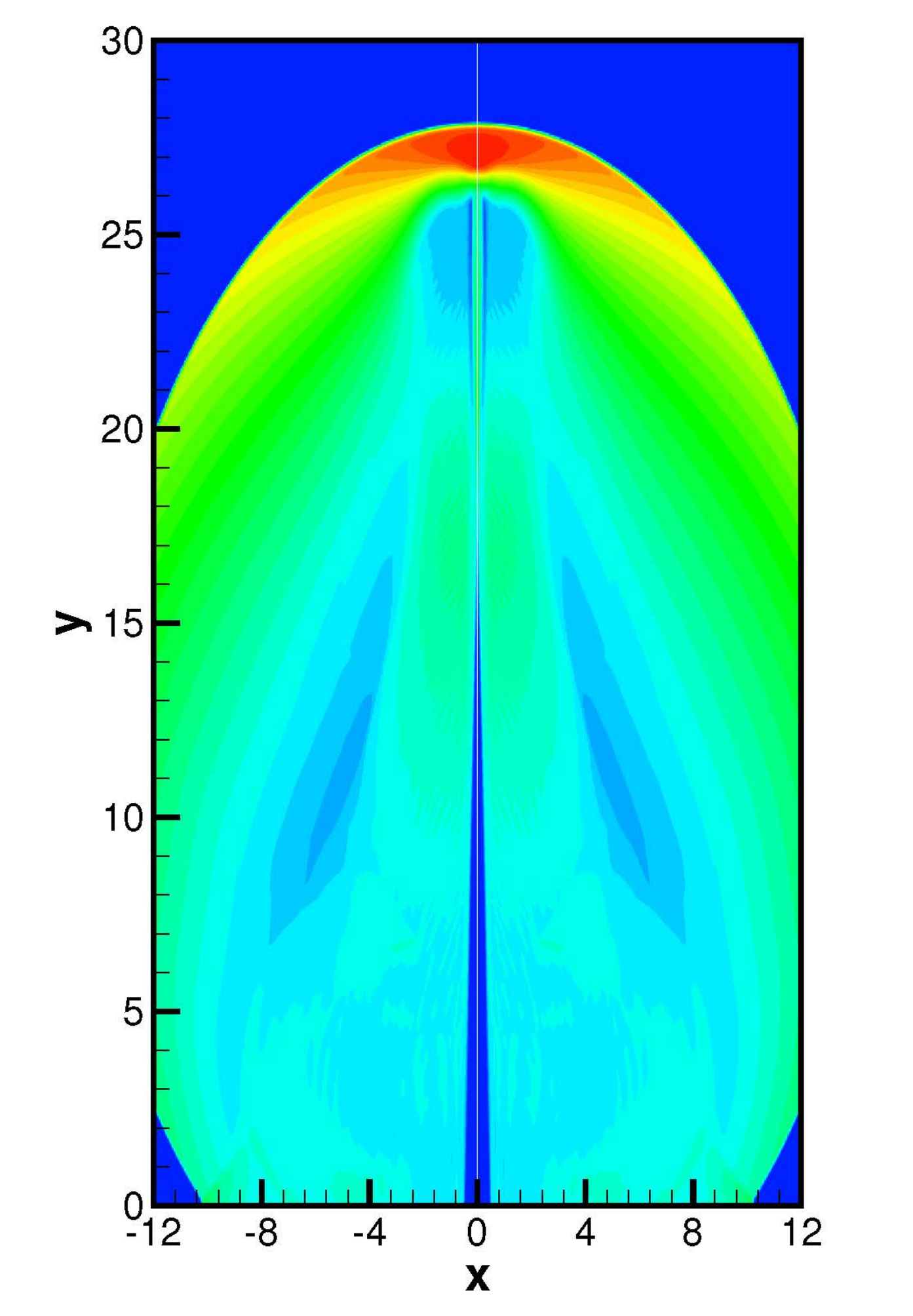}
	\caption{\small Example \ref{exam6}: Same as Figure \ref{fig6} except for the fifth-order PCP scheme.}
	\label{fig7}
\end{figure}

\section{Conclusion}\label{con}

This paper proposed a finite volume scheme based on the multidimensional HLL Riemann solver for the 2D special relativistic hydrodynamics and then 
studied its PCP property (i.e., preserving the positivity of the rest-mass density and the pressure and the boundness of the fluid velocity).
We first proved that the intermediate states in the multidimensional HLL Riemann solver were PCP when the HLL wave speeds were estimated suitably, and then showed the first-order accurate finite volume
scheme with the multidimensional HLL Riemann solver and forward Euler time discretization
was PCP.
Based on the resulting multidimensional HLL solver,  we developed the higher-order accurate PCP scheme by using the high-order accurate strong stability preserving (SSP) time 
discretization, the WENO reconstruction procedure, and the PCP flux limiter. Finally, several 2D numerical experiments were conducted to
demonstrate the accuracy and the effectiveness of the proposed PCP scheme in solving
the special RHD problems involving large Lorentz factor, or low
rest-mass density or low pressure or strong discontinuities, etc.

\section*{Acknowledgments}
The work was partially supported by the National Key R\&D Program of China (Project Number 2020YFA0712000). Moreover,
D. Ling would  like to acknowledge support by the National Natural Science Foundation
of China (Grant No. 12101486), the China Postdoctoral Science Foundation (Grant No. 2020M683446),
and the High-performance Computing Platform at Xi’an Jiaotong University;
H.Z. Tang would  like to acknowledge support    by  the National Natural
Science Foundation of China (Grant  No. 12171227 \& 12288101).

\begin{appendix}
	\renewcommand{\thesection}{Appendix \Alph{section}}
	\section{Proof of Lemma \ref{lem2}}\label{appendix}
	\renewcommand{\thesection}{\Alph{section}}
	This appendix provides a  proof of Lemma \ref{lem2}, 
	which is slightly different from that of  Lemma 2.3 in \cite{wu2015}. 
	Noting that the second and third properties in Lemma \ref{lem2} are  formally different from those in Lemma 2.3 of \cite{wu2015}.

	(\romannumeral1) For any positive number $\kappa$, let $(D^\kappa,\bm{m}^\kappa,E^\kappa )^T=\bm{U}^{\kappa}:=\kappa\bm{U}$. Since $\bm{U}\in\mathcal{G}$, it is easy to verify
	\begin{equation*}
		\begin{aligned}
			&D^{\kappa}=\kappa D>0,\ \
			&E^\kappa-\sqrt{(D^\kappa)^2+|\bm{m}^\kappa|^2}=\kappa\big(E-\sqrt{D^2+|\bm{m}|^2}\big)>0,
		\end{aligned}
	\end{equation*}
	which leads to admissibility of $\kappa\bm{U}$.
	
	(\romannumeral2) The convexity of $\mathcal{G}$ shows
	$$\frac{a_1}{a_1+a_2}\bm{U}_1+\frac{a_2}{a_1+a_2}\bm{U}_2\in\mathcal{G},$$
	for any $a_1,a_2>0$ and $\bm{U}_1,\bm{U}_2\in\mathcal{G}$.
	Combining it with the conclusion in
	(\romannumeral1) yields
	$$a_1\bm{U}_1+a_2\bm{U}_2\in\mathcal{G}.$$
	
	(\romannumeral3) For simplicity, denote
	$$\begin{aligned}
		(D^\alpha,\bm{m}_i^\alpha,E^\alpha)^T=\bm{U}^\alpha:
		&=\alpha\bm{U}-\bm{F}_i(\bm{U}),\\
		(D^\beta,\bm{m}_i^\beta,E^\beta)^T=\bm{U}^\beta:
		&=-\beta\bm{U}+\bm{F}_i(\bm{U}).
	\end{aligned}$$
	For the state $\bm{U}^\alpha$ with $\alpha\ge\lambda_i^{(4)}(\bm{U})$,
	we can get
	\begin{align*}
		D^\alpha&=D(\alpha-u_i)\ge D\big(\lambda_i^{(4)}(\bm{U})-u_i\big)>0,\\
		E^\alpha&=E(\alpha-u_i)-pu_i\ge E\big(\lambda_i^{(4)}(\bm{U})-u_i\big)-pu_i\\
		&=\frac{p\gamma^{2}}{c_s^2}\bigg(\big(\Gamma-c_s^2\gamma^{-2}\big)
		\frac{u_i(1-c_s^2)+c_s\gamma^{-1}\sqrt{1-u_i^2-c_s^2(|\bm{u}|^2-u_i^2)}}
		{1-c_s^2|\bm{u}|^2}-\Gamma u_i\bigg)\\
		&\ge\frac{p\gamma^{2}}{c_s^2}\bigg(\big(\Gamma-c_s^2\gamma^{-2}\big)
		\frac{u_i(1-c_s^{2})+c_s\gamma^{-2}}{1-c_s^{2}|\bm{u}|^2}-\Gamma u_i\bigg)\\
		&=\frac{p}{c_s(1-c_s^{2}|\bm{u}|^2)}\bigg(-c_su_i(\Gamma-c_s^2+1)
		+\Gamma-c_s^2\gamma^{-2}\bigg)\\
		&\ge\frac{p}{c_s(1-c_s^{2}|\bm{u}|^2)}\bigg(-c_s|
		\bm{u}|(\Gamma-c_s^2+1)+\Gamma-c_s^2\gamma^{-2}\bigg)\\
		&=\frac{p}{c_s(1+c_s|\bm{u}|)}\bigg(\Gamma-c_s^2-c_s|\bm{u}|\bigg)>0,
	\end{align*}
	and
	\begin{align*}
		(E^\alpha)^2-|\bm{m}^\alpha|^2-(D^\alpha)^2&=(E^2-|\bm{m}|^2-D^2-p^2)
		(\alpha-u_i)^2+p^2(\alpha^2-1)\\
		&=\frac{\Gamma p^2}{c_s^2}\gamma^{2}\bigg(\frac{2}{\Gamma-1}-\frac{\Gamma c_s^2}{(\Gamma-1)^2}\bigg)
		(\alpha-u_i)^2+p^2(\alpha^2-1)\\
		&=p^2\cdot f(\alpha),
	\end{align*}
	where $f(s)$ is a quadratic function of $s\in[\lambda_i^{(4)}(\bm{U}),1)$ with the form of
	\begin{equation*}
		f(s)=\frac{\Gamma\gamma^{2}}{c_s^2}\bigg(\frac{2}{\Gamma-1}-\frac{\Gamma c_s^2}{(\Gamma-1)^2}\bigg)
		(s-u_i)^2+s^2-1.
	\end{equation*}
	It is easy to prove that $f(s)$ is monotonically increasing with $s\in[\lambda_i^{(4)}(\bm{U}),1)$,
	so that
	$f(s)\ge f(\lambda_i^{(4)}(\bm{U}))$ for any $s\in[\lambda_i^{(4)}(\bm{U}),1)$ and then
	$f(\alpha)\ge f(\lambda_i^{(4)}(\bm{U}))$. Moreover, we   have
	\begin{equation*}
		\begin{aligned}
			f(\lambda_i^{(4)}(\bm{U}))&=\frac{2\Gamma(\Gamma-1)-\Gamma^2c_s^2}{(\Gamma-1)^2(1-c_s^2|\bm{u}|^2)^2}
			\bigg(-\frac{c_su_i}{\gamma}+\sqrt{1-u_i^{2}-c_s^{2}(|\bm{u}|^2-u_i^2)}\bigg)^2\\
			&~~~+\frac{\bigg(u_i(1-c_s^2)+c_s\gamma^{-1}\sqrt{1-u_i^{2}-c_s^{2}(|\bm{u}|^2-u_i^2)}\bigg)^2}
			{(1-c_s^2|\bm{u}|^2)^2}-1\\
			&=C_1\bigg(c_su_i\gamma^{-1}-\sqrt{1-u_i^{2}-c_s^{2}(|\bm{u}|^2-u_i^2)}\bigg)^2\ge0,
		\end{aligned}
	\end{equation*}
	with
	$$C_1=\frac{1}{(1-c_s^2|\bm{u}|^2)^2}\bigg(\Gamma^2-1+c_s^2(1-2\Gamma)\bigg)>0.$$
	Therefore,   $f(\alpha)>0$ and then $(E^\alpha)^2-|\bm{m}^\alpha|^2-(D^\alpha)^2>0$. So far,
	we have proved the conclusion $\alpha\bm{U}-\bm{F}(\bm{U})\in \mathcal{G}$ for $\alpha\ge\lambda_i^{(4)}(\bm{U})$.
	
	For the state  $\bm{U}^\beta$ with $\beta\le\lambda_i^{(1)}(\bm{U})$,
	one can similarly have
	\begin{align*}
		D^\beta&=D(u_i-\beta)\ge D\big(u_i-\lambda_i^{(1)}(\bm{U})\big)>0,\\
		E^\beta&=E(u_i-\beta)+pu_i\ge E\big(u_i-\lambda_i^{(1)}(\bm{U})\big)+pu_i\\
		&=\frac{p\gamma^{2}}{c_s^2}\bigg(-\big(\Gamma-c_s^2\gamma^{-2}\big)
		\frac{u_i(1-c_s^{2})-c_s\gamma^{-1}\sqrt{1-u_i^2-c_s^2(|\bm{u}|^2-u_i^2)}}
		{1-c_s^{2}|\bm{u}|^2}+\Gamma u_i\bigg)\\
		&\ge\frac{p\gamma^{2}}{c_s^2}\bigg(-\big(\Gamma-c_s^2\gamma^{-2}\big)
		\frac{u_i(1-c_s^{2})-c_s\gamma^{-2}}{1-c_s^{2}|\bm{u}|^2}+\Gamma u_i\bigg)\\
		&=\frac{p}{c_s(1-c_s^{2}|\bm{u}|^2)}\bigg(c_su_i(\Gamma-c_s^2+1)+\Gamma-c_s^2\gamma^{-2}\bigg)\\
		&\ge\frac{p}{c_s(1-c_s^{2}|\bm{u}|^2)}\bigg(-c_s|\bm{u}|(\Gamma-c_s^2+1)+\Gamma-c_s^2\gamma^{-2}\bigg)\\
		&=\frac{p}{c_s(1+c_s|\bm{u}|)}\bigg(\Gamma-c_s^2-c_s|\bm{u}|\bigg)>0,
	\end{align*}
	and
	\begin{align*}
		(E^\beta)^2-|\bm{m}^\beta|^2-(D^\beta)^2&=(E^2-|\bm{m}|^2-D^2-p^2)(\beta-u_i)^2+p^2(\beta^2-1)\\
		&=\frac{\Gamma p^2}{c_s^2}\gamma^{2}\bigg(\frac{2}{\Gamma-1}-\frac{\Gamma c_s^2}{(\Gamma-1)^2}\bigg)
		(\beta-u_i)^2+p^2(\beta^2-1)\\
		&=p^2\cdot g(\beta),
	\end{align*}
	where $g(s)$ is a quadratic function of $s\in(-1,\lambda_i^{(1)}(\bm{U})]$ with the form of
	\begin{equation*}
		g(s)=\frac{\Gamma\gamma^{2}}{c_s^2}\bigg(\frac{2}{\Gamma-1}-\frac{\Gamma c_s^2}{(\Gamma-1)^2}\bigg)
		(s-u_i)^2+s^2-1.
	\end{equation*}
	It is easy to  prove that $g(s)$ is monotonically decreasing with $s\in(-1,\lambda_i^{(1)}(\bm{U})]$,
	so that $g(s)\ge g(\lambda_i^{(1)}(\bm{U}))$ for any $s\in(-1,\lambda_i^{(1)}(\bm{U})]$ and then
	$g(\beta)\ge g(\lambda_i^{(1)}(\bm{U}))$. Moreover, we can show
	\begin{align*}
		g(\lambda_i^{(1)}(\bm{U}))&=\frac{2\Gamma(\Gamma-1)-\Gamma^2c_s^2}{(\Gamma-1)^2(1-c_s^2|\bm{u}|^2)^2}
		\bigg(c_su_i\gamma^{-1}+\sqrt{1-u^{2}-c_s^{2}(|\bm{u}|^2-u_i^2)}\bigg)^2\\
		&~~~+\frac{\bigg(u_i(1-c_s^2)-c_s\gamma^{-1}\sqrt{1-u_i^{2}-c_s^{2}(|\bm{u}|^2-u_i^2)}\bigg)^2}
		{(1-c_s^2|\bm{u}|^2)^2}-1\\
		&=C_2\bigg(c_su_i\gamma^{-1}+\sqrt{1-u_i^{2}-c_s^{2}(|\bm{u}|^2-u_i^2)}\bigg)^2\ge0,
	\end{align*}
	with
	$$C_2=\frac{1}{(1-c_s^2|\bm{u}|^2)^2}\bigg(\Gamma^2-1+c_s^2(1-2\Gamma)\bigg)>0.$$
	Therefore,  $g(\beta)>0$ and then $(E^\beta)^2-|\bm{m}^\beta|^2-(D^\beta)^2>0$, which leads to
	$-\beta\bm{U}+\bm{F}_i(\bm{U})\in \mathcal{G}$ for $\beta\le\lambda_i^{(1)}(\bm{U})$.
	%\end{appendix}
	\qed
\end{appendix}

\end{document}